\setlist[enumerate]{label={\upshape(\roman*)}}
\definecolor{bred}{rgb}{0.8,0,0}
\newtheorem{theorem}{Theorem}[section]
\newtheorem{proposition}[theorem]{Proposition}
\newtheorem{lemma}[theorem]{Lemma}
\newtheorem{corollary}[theorem]{Corollary}
\newtheorem{remark}[theorem]{Remark}
\newtheorem{assumption}{Assumption}
\def\cT{\mathsf{T}}
\def\cK{\mathsf{K}}
\def\cI{\mathsf{I}}
\def\cL{\mathsf{L}}
\def\cM{\mathsf{M}}
\def\cS{\mathsf{S}}
\def\cC{\mathsf{C}}
\def\cc{\mathsf{c}}
\def\rmd{\mathrm{d}}
\def\rmKL{\mathrm{KL}}
\def\rmLS{\mathrm{LS}}
\def\rmdet{\mathrm{det}}
\newcommand{\E}{\mathbb{E}}
\newcommand{\N}{\mathbb{N}}
\newcommand{\R}{\mathbb{R}}
\newcommand{\1}{\mathbbm{1}}
\newcommand{\lfrf}[1]{\left\lfloor #1 \right\rfloor}
\newcommand{\lcrc}[1]{\left\lceil #1 \right\rceil}
\begin{document}

\title[]{kTULA: A Langevin sampling algorithm with improved KL bounds \\under super-linear log-gradients}

\author[I. Lytras]{Iosif Lytras}
\author[S. Sabanis]{Sotirios Sabanis}
\author[Y. Zhang]{Ying Zhang}

\address {Archimedes/Athena Research Centre  \& National Technical University of Athens, Athens, Greece,Athens, Greece}
\email{i.lytras@athenarc.gr}

\address{School of Mathematics, The University of Edinburgh, Edinburgh, UK \& The Alan Turing Institute, London, UK \& National Technical University of Athens, Athens, Greece}
\email{s.sabanis@ed.ac.uk}

\address{Financial Technology Thrust, Society Hub, The Hong Kong University of Science and Technology (Guangzhou), 
Guangzhou, China}
\email{yingzhang@hkust-gz.edu.cn}

\date{}
\thanks{
Financial support by the Guangzhou-HKUST(GZ) Joint Funding Programs (No. 2024A03J0630 and No. 2025A03J3322), project MIS 5154714 of the National Recovery and Resilience Plan Greece 2.0 funded by the European Union under the NextGenerationEU Program, and a grant from the Simons Foundation is gratefully acknowledged.}
\keywords{Sampling problem, non-convex optimization, Kullback–Leibler divergence, non-asymptotic estimates, taming technique, super-linearly growing coefficients}

\begin{abstract}
Motivated by applications in deep learning, where the global Lipschitz continuity condition is often not satisfied, we examine the problem of sampling from distributions with super-linearly growing log-gradients. We propose a novel tamed Langevin dynamics-based algorithm, called kTULA, to solve the aforementioned sampling problem, and provide a theoretical guarantee for its performance. More precisely, we establish a non-asymptotic convergence bound in Kullback-Leibler (KL) divergence with the best-known rate of convergence equal to $2-\overline{\epsilon}$, $\overline{\epsilon}>0$, which significantly improves relevant results in existing literature. This enables us to obtain an improved non-asymptotic error bound in Wasserstein-2 distance, which can be used to further derive a non-asymptotic guarantee for kTULA to solve the associated optimization problems. To illustrate the applicability of kTULA, we apply the proposed algorithm to the problem of sampling from a high-dimensional double-well potential distribution and to an optimization problem involving a neural network. We show that our main results can be used to provide theoretical guarantees for the performance of kTULA. 
\end{abstract}
\maketitle

\section{Introduction}
A wide range of contemporary and emerging machine learning methods depend critically on the ability to efficiently sample from a high-dimensional target distribution $\pi_{\beta}$ on $\R^d$, typically expressed in Gibbs form as $\pi_{\beta}(\theta) \propto \exp(-\beta u(\theta))$, where $u\colon \R^d \to \R$ is a given potential function and $\beta>0$ is the inverse temperature parameter. Under suitable regularity conditions on $u$, the distribution $\pi_{\beta}$ naturally emerges as the stationary distribution of the Langevin stochastic differential equation (SDE) given by
\begin{equation}
\label{eq:LSDE}
Z_0 \coloneqq  \theta_0, \quad \rmd Z_t=-h(Z_t) \rmd t+ \sqrt{2\beta^{-1}} \rmd B_t, \quad t\geq 0,
\end{equation}
where $\theta_0$ is an $\R^d$-valued random variable, $h = \nabla u$, and $(B_t)_{t\geq 0}$ denotes a standard Brownian motion in $\R^d$. Leveraging this connection, one of the most widely used algorithms for sampling from $\pi_{\beta}$ is the unadjusted Langevin algorithm (ULA), denoted by $(\theta_n^{\text{ULA}})_{n\in\N_0}$, which is defined via the recursion:
\begin{equation}
\label{eq:ULA}
\theta_0^{\text{ULA}}\coloneqq  \theta_0, \quad \theta_{n+1}^{\text{ULA}} = \theta_n^{\text{ULA}} - \lambda h(\theta_n^{\text{ULA}}) + \sqrt{2\lambda\beta^{-1}} \xi_{n+1},
\end{equation}
where $\lambda > 0$ is the stepsize and $(\xi_n)_{n\in\N_0}$ is an i.i.d.\ sequence of standard Gaussian vectors in $\R^d$. This iteration can be interpreted as an Euler–Maruyama discretization of SDE~\eqref{eq:LSDE}, thus, for sufficiently small $\lambda$ and large $n$, the distribution of $\theta_n^{\text{ULA}}$ approximates the target distribution $\pi_{\beta}$.

This basic idea has spurred extensive research into the non-asymptotic convergence properties of ULA~\eqref{eq:ULA}, particularly in probability metrics such as the Wasserstein distance, total variation (TV) distance, and divergences like Kullback-Leibler (KL) or Rényi divergence. Much of this work assumes that $\pi_{\beta}$ is log-concave with Lipschitz continuous log-density gradient, i.e., that $u$ is convex and Lipschitz smooth, see, for instance, \cite{convex,dalalyan2017theoretical,durmus2017nonasymptotic,durmus2019high}. 

Beyond these settings, especially in the presence of multi-modal distributions, significant effort has been put into relaxing the convexity condition. This includes assuming weaker structural properties such as convexity at infinity and dissipativity, meaning that $u$ is convex outside a compact region, and its gradient $h$ satisfies a coercivity condition of the form $\langle h(\theta), \theta \rangle = \Omega(|\theta|^2)$, $\theta\in\R^d$. Key contributions in this direction include \cite{berkeley,erdogdu2022convergence}, along with related advances for the Stochastic Gradient Langevin Dynamics (SGLD) algorithm in \cite{nonconvex,raginsky,sabanis2020fully,xu2018global,zhang2023nonasymptotic}.

In parallel, building on the insights of \cite{vempala2019rapid}, another line of work investigates how isoperimetric inequalities can be used to analyze the convergence of ULA~\eqref{eq:ULA}, particularly under the assumption that $u$ is Lipschitz smooth. Notably, some results leverage logarithmic Sobolev inequalities (LSI) as in \cite{chewi2021analysis,mou2022improved}, while others use Poincaré inequalities (PI) as in \cite{balasubramanian2022towards}, or even weaker functional inequalities, as explored in \cite{mousavi2023towards}. 

This work aims to unify two major directions in the sampling literature: one that relaxes the global Lipschitz smoothness assumption, and another that weakens convexity requirements using isoperimetric inequalities. Our goal is to combine the strengths of both approaches. Inspired by deep learning applications—where objective functions are often highly non-Lipschitz—we pose the following central question:
\begin{center}\itshape
\vspace{0.5em}
How can we perform efficient sampling when the target distribution is neither log-concave nor associated with linearly growing gradients?
\vspace{0.5em}
\end{center}
This scenario presents substantial challenges. As noted in several prior studies, both ULA \eqref{eq:ULA} and its stochastic variants (e.g., SGLD) can become unstable in such regimes. In particular, when $h$ grows super-linearly, the associated Euler–Maruyama scheme—which forms the backbone of ULA~\eqref{eq:ULA}—can fail dramatically. A pivotal result in \cite{hutzenthaler2011} showed that, in such cases, the numerical approximation can diverge from the true SDE solution in mean square sense, even over finite time horizons. This divergence is linked directly to the explosion of moments in the discretized process, highlighting why standard schemes can break down when applied to approximate SDE \eqref{eq:LSDE} with super-linearly growing $h$.

Addressing this issue requires a fundamentally different approach—one that revisits the way that ULA~\eqref{eq:ULA} is designed as a numerical discretization of SDE~\eqref{eq:LSDE}, and leverages insights from the theory of numerical methods for SDEs. In this vein, a promising class of techniques—known as tamed Euler schemes—emerged in \cite{hutzenthaler2012} and subsequent studies such as \cite{tamed-euler,SabanisAoAP}. These methods modify $h$ to ensure the stability of ULA~\eqref{eq:ULA} even under super-linear growth. Specifically, they replace the original coefficient $h$ with a modified version $h_{\lambda}$, which depends on the stepsize $\lambda$ and is constructed to satisfy two essential properties:
\begin{enumerate}
[label=\textbf{\upshape(P\arabic*)}]
\item \label{enum:prop1}
The tamed coefficient $h_{\lambda}$ grows at most linearly, i.e., $h_{\lambda}(\theta) = \mathcal{O}(|\theta|)$ as $|\theta| \to \infty$.
\item \label{enum:prop2}
$h_{\lambda}$ converges pointwise to the original coefficient $h$ as $\lambda \to 0$.
\end{enumerate}
These features ensure both the stability of the numerical scheme and the consistency with the original dynamics as the stepsize vanishes.

The taming technique has been applied previously in the setting of Langevin dynamics-based sampling in multiple works under strong dissipativity or convexity assumptions \cite{tula,johnston2023kinetic,hola}, in the stochastic gradient case \cite{lim2021nonasymptotic,lim2024langevin,TUSLA}, under a ``convexity at infinity'' assumption \cite{neufeld2024non,neufeld2025non}, and, in recent works, under a functional inequality in \cite{lytras2024tamed,lytras2025taming}, while some important work has been done using truncated or projected schemes under a convexity at infinity condition \cite{bao2024}.

Among all this important work, \cite{lytras2024tamed,lytras2025taming} entail the only results in KL divergence (which is a considerably stronger metric than Wasserstein and total variation) and achieve a $\mathcal{O}(\epsilon^{-1})$ rate. In this work, we propose a new Langevin dynamics-based algorithm, called the KL-accelerated tamed unadjusted Langevin algorithm (kTULA), which can be viewed as a modification of TULA originally proposed in \cite{tula}. Our goal is to establish a convergence result of kTULA in KL divergence with an improved rate of convergence under relaxed conditions. To this end, we first note that kTULA satisfies the aforementioned properties \ref{enum:prop1} and \ref{enum:prop2}. Then, under the conditions that the Hessian of $u$ is polynomially Lipschitz continuous, $h$ satisfies a dissipativity condition, $\pi_{\beta}$ satisfies a Log-Sobolev inequality, together with certain conditions on the distribution of $\theta_0$, we obtain a non-asymptotic error bound in KL divergence of the distribution of kTULA w.r.t.\ $\pi_{\beta}$. Crucially, the rate of convergence is proved to be $2-\overline{\epsilon}$ with $\overline{\epsilon}>0$, which can be made arbitrarily close to $2$. This provides the state-of-the-art convergence result for the Langevin dynamics-based algorithms to sample from distributions with highly non-linear potentials. Consequently, we are able to derive a non-asymptotic convergence bound in Wasserstein-2 distance via Talagrand’s inequality and thus a non-asymptotic upper estimate for the expected excess risk of the associated optimization problem (linked to the problem of sampling from $\pi_{\beta}$). Furthermore, to illustrate the applicability of our newly proposed algorithm, we consider using kTULA to sample from a high-dimensional double-well potential distribution and to solve an optimization problem involving a neural network. We show that our theoretical results can be used to provide convergence guarantees for kTULA to solve the aforementioned problems.

We conclude this section by introducing some notation. Let $(\Omega,\mathcal{F},P)$ be a probability space. We denote by $\E[Z]$  the expectation of a random variable $Z$. For $1\leq p<\infty$, $L^p$ is used to denote the usual space of $p$-integrable real-valued random variables. Fix integers $d, m \geq 1$. For an $\R^d$-valued random variable $Z$, its law on $\mathcal{B}(\R^d)$, i.e. the Borel sigma-algebra of $\R^d$, is denoted by $\mathcal{L}(Z)$. For a positive real number $a$, we denote by $\lfrf{a}$ its integer part, and $\lcrc{a} \coloneqq \min\{b\in \mathbb{Z}|b\geq a\} $. The Euclidean scalar product is denoted by $\langle \cdot,\cdot\rangle$, with $|\cdot|$ standing for the corresponding norm (where the dimension of the space may vary depending on the context). Denote by $\|M\|$ and $M^{\cT}$ the spectral norm and the transpose of any given matrix $M \in \R^{d\times m}$, respectively. Let $f\colon\R^d \rightarrow \R$ and $g\colon \R^d \rightarrow \R^d$ be twice continuously differentiable functions. Denote by $\nabla f$ and $\nabla^2 f$ the gradient of $f$ and the Hessian of $f$, respectively. For any integer $q \geq 1$, let $\mathcal{P}(\R^q)$ denote the set of probability measures on $\mathcal{B}(\R^q)$. For $\mu\in\mathcal{P}(\R^d)$ and for a $\mu$-integrable function $f\colon\R^d\to\R$, the notation $\mu(f)\coloneqq\int_{\R^d} f(\theta)\mu(\rmd \theta)$ is used. Let $\mu,\nu\in\mathcal{P}(\R^d)$ be two probability measures. If $\mu \ll \nu$, we denote by $\rmd \mu/\rmd \nu$ the Radon-Nikodym derivative of $\mu $ w.r.t.\ $\nu$, and the KL divergence of $\mu $ w.r.t.\ $\nu$ is given by 
\[
\rmKL(\mu\|\nu) = \int_{\R^d} \frac{\rmd \mu}{\rmd \nu} \log\left(\frac{\rmd \mu}{\rmd \nu} \right)\, \rmd\nu.
\]
Let $\mathcal{C}(\mu,\nu)$ denote the set of probability measures $\zeta$ on $\mathcal{B}(\R^{2d})$ such that its respective marginals are $\mu,\nu$. For two Borel probability measures $\mu$ and $\nu$ defined on $\R^d$ with finite $p$-th moments, the Wasserstein distance of order $p \geq 1$ is defined as
\[
{W}_p(\mu,\nu)\coloneqq
\left(\inf_{\zeta\in\mathcal{C}(\mu,\nu)}\int_{\R^d}\int_{\R^d}|\theta-\overline{\theta}|^p\zeta(\rmd \theta \rmd \overline{\theta})\right)^{1/p}.
\]

\section{Assumptions and main results}\label{sec:main}

Let $u \colon \R^d \to \R$ be a three times continuously differentiable function satisfying $\int_{\R^d} e^{-\beta u(\theta)} \, \rmd \theta <\infty$ for any $\beta>0$. Denote by $h \coloneqq \nabla u$ and $H\coloneqq\nabla^2 u$ the gradient and Hessian of $u$, respectively. Furthermore, define, for any $\beta>0$,
\begin{equation}\label{eq:pibetaexp}
\pi_{\beta}(A) \coloneqq \frac{\int_A e^{-\beta u(\theta)} \, \rmd \theta}{\int_{\R^d} e^{-\beta u(\theta)} \, \rmd \theta}, \quad A \in \mathcal{B}(\R^d).
\end{equation}

To sample from $\pi_{\beta}$, we fix $\beta>0$ and propose the kTULA algorithm given by
\begin{equation}\label{eq:ktula}
\theta^{\lambda}_0 \coloneqq \theta_0, \quad \theta^{\lambda}_{n+1}=\theta^{\lambda}_n-\lambda h_\lambda(\theta^{\lambda}_n,)+ \sqrt{2\lambda\beta^{-1}} \xi_{n+1},\quad  n\in\N_0,
\end{equation}
where $\theta_0$ is an $\R^d$-valued random variable, $\lambda>0$ is the stepsize, $(\xi_n)_{n \in \N_0}$ are i.i.d.\ standard $d$-dimensional Gaussian random vectors, and where for each $\lambda>0$, $h_{\lambda}:\R^d \to \R^d$ is a function defined by
\begin{equation}\label{eq:ktulah}
h_\lambda(\theta)\coloneqq a\theta +\frac{h(\theta)-a\theta}{(1+\lambda|\theta|^{ (l+1)/\epsilon_h})^{\epsilon_h}}.
\end{equation}
with $a>0$, $l \in\N$, and $\epsilon_h \in (0,1/2]$. We denote by $\pi^{\lambda}_n$ the density of $\mathcal{L}(\theta^{\lambda}_n)$ for all $n\in\N_0$.

\begin{remark}
The design of the tamed coefficient $h_\lambda$ in \eqref{eq:ktulah} follows from that of mTULA~\cite{neufeld2025non} and sTULA~\cite{lytras2025taming}. It allows us to derive several properties of $h_\lambda$, see Remark \ref{lem:hlambdaestimates} below, which are crucial to establish moment estimates and convergence results of kTULA \eqref{eq:ktula}-\eqref{eq:ktulah}. More precisely, by adopting the splitting trick originally used in \cite{johnston2023kinetic,lytras2025taming}, the tamed coefficient $h_{\lambda}$ \eqref{eq:ktulah} satisfies a dissipativity condition, i.e., Remark~\ref{lem:hlambdaestimates}-\ref{lem:hlambdaestimates1}, which enables a contraction of the algorithm \eqref{eq:ktula} and therefore an easier computation of the associated moment bounds. Moreover, dividing the term $(1+\lambda|\theta|^{ (l+1)/\epsilon_h})^{\epsilon_h}$ in \eqref{eq:ktulah} gives an improved upper bound for the difference between $h_{\lambda}$ and $h$, i.e., Remark~\ref{lem:hlambdaestimates}-\ref{lem:hlambdaestimates4}, compared to the works in \cite{TUSLA,lytras2024tamed,lytras2025taming}, which allows the derivation of improved convergence results in Theorem \ref{thm:mainthm} and Corollary~\ref{crl}.
\end{remark}

\subsection{Assumptions} Let $u \colon \R^d \rightarrow \R$ be three times continuously differentiable. The following assumptions are stated.

We first impose conditions on $\theta_0$ and $\pi^{\lambda}_0$.
\begin{assumption}\label{asm:initialc}
The initial condition $\theta_0$ is independent of $(\xi_{n})_{n\in\N_0}$. Moreover, $\pi^{\lambda}_0$ has exponential decay. $|\nabla \log \pi^{\lambda}_0|$ and $\|\nabla ^2 \log \pi^{\lambda}_0\|$ have polynomial growth.
\end{assumption}

Then, we impose a local Lipschitz condition on $H$.
\begin{assumption}\label{asm:polylip}
There exist $L>0$ and $l \in\N$ such that, for all $\theta, \overline{\theta} \in \R^d$,
\[
\|H(\theta)-H(\overline{\theta})\|\leq L(1+|\theta|+|\overline{\theta}|)^{l-1}|\theta-\overline{\theta}|.
\]
In addition, there exist $K_H, K_h>0$ such that, for all $\theta \in \R^d$,
\[
\|H(\theta)\|\leq K_H(1+|\theta|^{l}), \quad |h(\theta)| \leq K_h(1+|\theta|^{l+1}). 
\]
\end{assumption}

By using Assumption \ref{asm:polylip}, we deduce in the following remark that $h$ satisfies a local Lipschitz condition. The proof is postponed to Appendix \ref{rmk:polyliphproof}.
\begin{remark}\label{rmk:polyliph}
By Assumption \ref{asm:polylip}, for any $\theta, \overline{\theta} \in \R^d$, we obtain the following inequality:
\begin{align*}
|h(\theta) - h(\overline{\theta})| 	&\leq K_H(1+|\theta|+|\overline{\theta}|)^{l}|\theta-\overline{\theta}|.
\end{align*}
\end{remark}

\begin{remark}\label{rmk:dwoptconst}
In Assumption \ref{asm:polylip}, we impose separately growth conditions of $H$ and $h$, which could have been derived by using the local Lipschitz condition on $H$. The reason is that we want to optimize the stepsize restriction $\lambda_{\max}$ defined in \eqref{eq:stepsizemax}, which is related to the reciprocal of $K_H$ and $K_h$. As a concrete example, consider the double-well potential $u(\theta)=(1/4)|\theta|^4-(1/2)|\theta|^2$, $\theta \in \R^d$. Then, the local Lipschitz condition in Assumption \ref{asm:polylip} is satisfied with $L=3$, $l=2$, which implies
\[
\|H(\theta)\|\leq 6(1+|\theta|^{2}), \quad |h(\theta)| \leq 12(1+|\theta|^{3}). 
\] 
However, by the fact that $H(\theta) = (|\theta|^2-1)\cI_d+2\theta\theta^{\cT}$ with $\cI_d$ denoting the $d \times d$ identity matrix and $h(\theta) = \theta(|\theta|^2-1)$, $\theta\in\R^d$, we immediately observe that 
\[
\|H(\theta)\|\leq 3(1+|\theta|^{2}), \quad |h(\theta)| \leq 2(1+|\theta|^{3}). 
\]
Hence, in view of \eqref{eq:stepsizemax}, imposing separately growth conditions of $H$ and $h$ in Assumption \ref{asm:polylip} can help avoid unnecessarily small $\lambda_{\max}$.
\end{remark}

Next, we impose a dissipativity condition on $h$. 
\begin{assumption}\label{asm:dissip}
There exist $a, b>0$ such that, for all $\theta\in\R^d$,
\[
\langle h(\theta),\theta\rangle \geq a|\theta|^2-b.
\]
\end{assumption}

Finally, we impose a Log-Sobolev inequality on $\pi_{\beta}$ defined in \eqref{eq:pibetaexp}.
\begin{assumption}\label{asm:LSI}
$\pi_{\beta}$ satisfies a Log-Sobolev inequality with a constant $C_{\rmLS}>0$.
\end{assumption}

\subsection{Main results} Denote by
\begin{equation}\label{eq:stepsizemax}
\lambda_{\max}\coloneqq \min\{  1,1/(8a), 1/(6\cL_0)^{1/(1-\epsilon_h)}\}
\end{equation}
with $\cL_0 \coloneqq 2a+4K_H+(l+1)(2K_h+a)$ and $\epsilon_h \in (0,1/2]$. 

Recall that $\pi^{\lambda}_n$ denotes the density of $\mathcal{L}(\theta^{\lambda}_n)$ for all $n\in\N_0$. Under Assumptions \ref{asm:initialc}, \ref{asm:polylip}, \ref{asm:dissip}, and \ref{asm:LSI}, we obtain the following non-asymptotic error bound for the KL divergence of $\pi^{\lambda}_n$ w.r.t.\ $\pi_{\beta}$.

\begin{theorem}\label{thm:mainthm}
Let Assumptions \ref{asm:initialc}, \ref{asm:polylip}, \ref{asm:dissip}, and \ref{asm:LSI} hold. Then, for any $\beta>0$, $\epsilon_h \in (0,1/2]$, $\epsilon>0$, there exist positive constants $C_0, C_1$ such that, for any $n \in \N_0$, $0<\lambda\leq \lambda_{\max}$,
\[
\rmKL(\pi^{\lambda}_n\|\pi_{\beta}) \leq e^{-C_0\lambda n}\rmKL(\pi^{\lambda}_0\|\pi_{\beta}) +C_1\lambda^{2-\epsilon_h-\epsilon(1-\epsilon_h/2)},
\]
where $C_0, C_1$ are given explicitly in Appendix \ref{appen:constexp}. Furthermore, for any $\delta>0$, if we choose 
\begin{align*}
\lambda	&\leq \min\{ (\delta/(2C_1))^{1/(2-\epsilon_h-\epsilon(1-\epsilon_h/2))},\lambda_{\max}\},\\
n 		&\geq (1/C_0)\max\{ (2C_1/\delta)^{1/(2-\epsilon_h-\epsilon(1-\epsilon_h/2))},1/\lambda_{\max}\}\log (2\rmKL(\pi^{\lambda}_0\|\pi_{\beta})/\delta),
\end{align*}
then $\rmKL(\pi^{\lambda}_n\|\pi_{\beta})\leq \delta$.
\end{theorem}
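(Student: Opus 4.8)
The plan is to establish the recursion by interpolating the discrete kTULA iteration with a continuous-time diffusion whose marginal law we can compare to $\pi_\beta$ via the LSI, following the now-standard ``differential inequality along the Fokker--Planck flow'' strategy of \cite{vempala2019rapid}, suitably adapted to the tamed, non-Lipschitz setting. Concretely, for $n\in\N_0$ and $t\in[0,\lambda]$ I would define the continuous interpolation $\widetilde\theta_t = \theta^\lambda_n - t\,h_\lambda(\theta^\lambda_n) + \sqrt{2\beta^{-1}}(B_{nλ+t}-B_{nλ})$, so that $\widetilde\theta_\lambda \overset{d}{=}\theta^\lambda_{n+1}$, and let $\rho_t$ denote its density. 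The density $\rho_t$ solves a Fokker--Planck equation $\partial_t \rho_t = \nabla\cdot(\rho_t\,\E[h_\lambda(\theta^\lambda_n)\mid \widetilde\theta_t=\cdot]) + \beta^{-1}\Delta\rho_t$, and differentiating $\rmKL(\rho_t\|\pi_\beta)$ in $t$ yields, after an integration by parts and completing the square, a bound of the schematic form
\[
\frac{\rmd}{\rmd t}\rmKL(\rho_t\|\pi_\beta) \le -\frac{1}{2\beta}\,\mathrm{FI}(\rho_t\|\pi_\beta) + \frac{\beta}{2}\,\E\big|\E[h_\lambda(\theta^\lambda_n)\mid\widetilde\theta_t] - h(\widetilde\theta_t)\big|^2,
\]
where $\mathrm{FI}$ is the relative Fisher information. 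Applying Assumption \ref{asm:LSI} to convert $\mathrm{FI}(\rho_t\|\pi_\beta)\ge (2/C_{\rmLS})\rmKL(\rho_t\|\pi_\beta)$ then gives a Gronwall-type differential inequality with forcing term controlled by the ``discretization error'' $\E|\E[h_\lambda(\theta^\lambda_n)\mid\widetilde\theta_t]-h(\widetilde\theta_t)|^2$.

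The second and technically heaviest step is to bound that forcing term by $C\lambda^{1-\epsilon_h-\epsilon(1-\epsilon_h/2)}$ uniformly over $t\in[0,\lambda]$ and $n$, which after integrating over one step of length $\lambda$ produces the claimed exponent $2-\epsilon_h-\epsilon(1-\epsilon_h/2)$. I would split this error as $\E|h_\lambda(\widetilde\theta_t)-h(\widetilde\theta_t)|^2 + \E|h_\lambda(\theta^\lambda_n)-h_\lambda(\widetilde\theta_t)|^2$ (plus the conditional-expectation/Jensen reduction). The first piece is the taming bias, handled by Remark \ref{lem:hlambdaestimates}--\ref{lem:hlambdaestimates4}, which—thanks to the extra division by $(1+\lambda|\theta|^{(l+1)/\epsilon_h})^{\epsilon_h}$ in \eqref{eq:ktulah}—gives a power of $\lambda$ improved over \cite{TUSLA,lytras2024tamed,lytras2025taming} and, combined with uniform-in-$n$ moment bounds on $\theta^\lambda_n$ (and on $\widetilde\theta_t$), contributes the $\lambda^{1-\epsilon_h}$-type factor; the small correction $\epsilon(1-\epsilon_h/2)$ arises from trading off moment orders via Young's/Hölder's inequality, where a higher moment of $|\theta^\lambda_n|$ costs a polynomial-in-$\lambda$ factor that one balances against $\epsilon$. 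The second piece is a one-step increment bound: using the local Lipschitz property of $h_\lambda$ (inherited from Remark \ref{rmk:polyliph} and property \ref{enum:prop1}) together with $\widetilde\theta_t - \theta^\lambda_n = O(\lambda|h_\lambda(\theta^\lambda_n)| + |B_{t}-B_0|)$, one gets a factor $\lambda + $ Gaussian increment, again tamed by uniform moment estimates. The essential ingredient feeding all of this is a \emph{uniform-in-$n$} $p$-th moment bound $\sup_n \E|\theta^\lambda_n|^{2p} \le C_p$ for all $p\ge 1$, which follows from the dissipativity of $h_\lambda$ (Remark \ref{lem:hlambdaestimates}--\ref{lem:hlambdaestimates1}) and the stepsize restriction \eqref{eq:stepsizemax} giving a geometric contraction $\E|\theta^\lambda_{n+1}|^2 \le (1-c\lambda)\E|\theta^\lambda_n|^2 + C\lambda$; I would establish this (and the analogous bound for $\widetilde\theta_t$) as a preliminary lemma.

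Finally, I would close the argument by solving the per-step differential inequality: over $[n\lambda,(n+1)\lambda]$ it yields $\rmKL(\pi^\lambda_{n+1}\|\pi_\beta) \le e^{-2\lambda/(\beta C_{\rmLS})}\rmKL(\pi^\lambda_n\|\pi_\beta) + C_1' \lambda^{2-\epsilon_h-\epsilon(1-\epsilon_h/2)}$, and then iterating this geometric recursion in $n$ gives $\rmKL(\pi^\lambda_n\|\pi_\beta)\le e^{-C_0\lambda n}\rmKL(\pi^\lambda_0\|\pi_\beta) + C_1\lambda^{2-\epsilon_h-\epsilon(1-\epsilon_h/2)}$ with $C_0 = 2/(\beta C_{\rmLS})$ (up to the harmless $\lambda_{\max}$-dependent adjustments) and $C_1 = C_1'/(1-e^{-C_0\lambda_{\max}})$ bounded uniformly in $\lambda$ since $1-e^{-C_0\lambda}\ge c\lambda$. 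The stated choices of $\lambda$ and $n$ in the theorem then follow by elementary algebra: pick $\lambda$ small enough that $C_1\lambda^{2-\epsilon_h-\epsilon(1-\epsilon_h/2)}\le\delta/2$ and $n$ large enough that $e^{-C_0\lambda n}\rmKL(\pi^\lambda_0\|\pi_\beta)\le\delta/2$. Two points require care: Assumption \ref{asm:initialc} is needed to guarantee $\rmKL(\pi^\lambda_0\|\pi_\beta)<\infty$ and to justify the regularity (finite Fisher information, valid integration by parts) of $\rho_t$ at $t=0$ for the very first step—so I would invoke the exponential-decay and polynomial-growth hypotheses there—and the interpolation argument must be run carefully on each interval $[n\lambda,(n+1)\lambda]$ with $\rho_{n\lambda}=\pi^\lambda_n$ as a fresh initial datum, using the Markov property of the iteration. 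The main obstacle is unquestionably the sharp control of the discretization/taming error term: getting the exponent all the way up to $2-\epsilon_h-\epsilon(1-\epsilon_h/2)$ rather than the $1$ of earlier works hinges on exploiting the new form of $h_\lambda$ in Remark \ref{lem:hlambdaestimates}--\ref{lem:hlambdaestimates4} together with a delicate moment/Hölder balancing encoded by the free parameter $\epsilon$.
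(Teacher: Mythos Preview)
Your high-level framework---continuous interpolation, Fokker--Planck differentiation of $\rmKL$, LSI to convert Fisher information into KL, then Gronwall and iteration---matches the paper and is correct. The gap is in how you bound the forcing term. Your splitting
\[
\E\big|\E[h_\lambda(\theta^\lambda_n)\mid\widetilde\theta_t]-h(\widetilde\theta_t)\big|^2 \lesssim \E|h_\lambda(\widetilde\theta_t)-h(\widetilde\theta_t)|^2 + \E|h_\lambda(\theta^\lambda_n)-h_\lambda(\widetilde\theta_t)|^2
\]
is exactly the decomposition in \cite{lytras2025taming}, and it only delivers rate $1$, not $2-\overline\epsilon$. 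The second piece, controlled via local Lipschitzness of $h_\lambda$ and $|\widetilde\theta_t-\theta^\lambda_n|^2\sim \lambda$ (in the paper's rescaled time, Brownian increment contributes $O(\lambda)$), is $O(\lambda)$; after multiplying by the $\lambda$ in front of the forcing and dividing by the geometric sum $\sim\lambda^{-1}$, you land on $\rmKL=O(\lambda)$. Your attribution of the exponent $2-\epsilon_h-\epsilon(1-\epsilon_h/2)$ to the taming bias and moment tradeoffs is also off: Lemma~\ref{lem:hlambdaestimates}-\ref{lem:hlambdaestimates4} gives $|h-h_\lambda|^2\le C\lambda^2$, and moment bounds are uniform in $\lambda$, so neither produces $\lambda^{-\epsilon_h}$ or $\lambda^{-\epsilon}$.

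What the paper actually does---and what you are missing---is the decomposition of \cite{mou2022improved}: write $\E[h_\lambda(\overline\theta^\lambda_n)-h(\overline\theta^\lambda_t)\mid\overline\theta^\lambda_t=\theta]=\nabla h_\lambda(\theta)(F_1-F_2-F_3)+r_t+\overline r_t$ (equations \eqref{eq:decomp}--\eqref{eq:decompterms}). The crucial term is $F_1$, which via an integration-by-parts identity (exploiting that the one-step increment is Gaussian conditional on $\overline\theta^\lambda_n$) becomes $F_1(\theta)=2\beta^{-1}\lambda(t-n)\,\E[\nabla\log\pi^\lambda_n(\overline\theta^\lambda_n)\mid\overline\theta^\lambda_t=\theta]$. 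This picks up an extra factor of $\lambda$ at the cost of introducing the score of the \emph{algorithm's own law}. One then needs two further ingredients you do not mention: (a) a H\"older argument using the pointwise bound $|\nabla\log\pi^\lambda_n(\theta)|\le \lambda^{-1}(\widetilde\cc_1|\theta|+\widetilde\cc_2)$ (Lemma~\ref{lem:ktuladensitygrowth}) to get $\E|\nabla h_\lambda F_1|^2\le C\lambda^{2-\epsilon}J_n^{1-\epsilon/2}$ where $J_n=\E|\nabla\log\pi^\lambda_n(\overline\theta^\lambda_n)|^2$; and (b) a uniform-in-$n$ bound $J_n\le C\lambda^{-\epsilon_h}$ (Lemma~\ref{lem:ubforfisherinfo}), proved by a Fisher-information recursion through the convolution inequality $1/J_{n+1}\ge 1/J(\psi^\lambda_n)+2\lambda/(d\beta)$ and a change-of-variables estimate on $J(\psi^\lambda_n)$ that exploits the $\lambda^{-\epsilon_h}$-Lipschitz property of $h_\lambda$. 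Combining $\lambda^{2-\epsilon}\cdot(\lambda^{-\epsilon_h})^{1-\epsilon/2}=\lambda^{2-\epsilon_h-\epsilon(1-\epsilon_h/2)}$ is where the claimed exponent actually comes from. Without the score trick and the Fisher recursion, your argument cannot escape rate $1$.
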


Moreover, by using Talagrand’s inequality (see, e.g., \cite[Definition 2.3]{lytras2025taming}), we can further deduce a non-asymptotic convergence bound in Wasserstein-2 distance between $\mathcal{L}(\theta^{\lambda}_n)$ and $\pi_{\beta}$.
\begin{corollary}\label{crl}
Let Assumptions \ref{asm:initialc}, \ref{asm:polylip}, \ref{asm:dissip}, and \ref{asm:LSI} hold. Then, for any $\beta>0$, $\epsilon_h \in (0,1/2]$, $\epsilon>0$, there exist positive constants $C_0, C_1, C_2$ such that, for any $n \in \N_0$, $0<\lambda\leq \lambda_{\max}$,
\[
W_2(\mathcal{L}(\theta^{\lambda}_n),\pi_{\beta}) \leq C_2\left(e^{-C_0\lambda n}\rmKL(\pi^{\lambda}_0\|\pi_{\beta}) +C_1\lambda^{2-\epsilon_h-\epsilon(1-\epsilon_h/2)}\right)^{1/2},
\]
where $C_0, C_1, C_2$ are given explicitly in Appendix \ref{appen:constexp}. Furthermore, for any $\delta>0$, if we choose 
\begin{align*}
\lambda	&\leq \min\{ (\delta/(2C_2\sqrt{C_1}))^{2/(2-\epsilon_h-\epsilon(1-\epsilon_h/2))},\lambda_{\max}\},\\
n 		&\geq (2/C_0)\max\{ (2C_2\sqrt{C_1}/\delta)^{2/(2-\epsilon_h-\epsilon(1-\epsilon_h/2))},1/\lambda_{\max}\}\log (2C_2\rmKL^{1/2}(\pi^{\lambda}_0\|\pi_{\beta})/\delta),
\end{align*}
then $W_2(\mathcal{L}(\theta^{\lambda}_n),\pi_{\beta})\leq \delta$.
\end{corollary}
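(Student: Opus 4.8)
The plan is to derive Corollary~\ref{crl} directly from Theorem~\ref{thm:mainthm} by combining the KL bound with Talagrand's inequality. Since Assumption~\ref{asm:LSI} gives a Log-Sobolev inequality for $\pi_{\beta}$ with constant $C_{\rmLS}$, the standard Otto--Villani argument yields the Talagrand transport inequality $W_2^2(\mu,\pi_{\beta}) \leq 2C_{\rmLS}\, \rmKL(\mu\|\pi_{\beta})$ for any $\mu \in \mathcal{P}(\R^d)$ absolutely continuous with respect to $\pi_{\beta}$. Applying this with $\mu = \mathcal{L}(\theta^{\lambda}_n) = \pi^{\lambda}_n$ gives
\[
W_2(\mathcal{L}(\theta^{\lambda}_n),\pi_{\beta}) \leq \sqrt{2C_{\rmLS}}\,\bigl(\rmKL(\pi^{\lambda}_n\|\pi_{\beta})\bigr)^{1/2},
\]
so setting $C_2 \coloneqq \sqrt{2C_{\rmLS}}$ and inserting the bound from Theorem~\ref{thm:mainthm} immediately produces the displayed estimate, with $C_0, C_1$ inherited verbatim from the theorem.

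For the second part, I would substitute the prescribed choices of $\lambda$ and $n$ into the Wasserstein bound and check that each of the two terms is at most $\delta/2$, so that the total is at most $\delta$. For the transient term: since $n \geq (2/C_0)\cdot(1/\lambda_{\max})\log(2C_2\rmKL^{1/2}(\pi^{\lambda}_0\|\pi_\beta)/\delta)$ and $\lambda \cdot (1/\lambda_{\max})$-scaling ensures $C_0 \lambda n \geq 2\log(2C_2\rmKL^{1/2}(\pi^{\lambda}_0\|\pi_\beta)/\delta)$ (using that $\lambda n \geq \lambda_{\max}\cdot n/\lambda_{\max}$-type bookkeeping together with the other branch of the max), we get $e^{-C_0\lambda n}\rmKL(\pi^{\lambda}_0\|\pi_\beta) \leq \delta^2/(4C_2^2)$, hence $C_2 (e^{-C_0\lambda n}\rmKL(\pi^{\lambda}_0\|\pi_\beta))^{1/2} \leq \delta/2$ after also using sub-additivity of the square root, $\sqrt{x+y}\leq\sqrt{x}+\sqrt{y}$. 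For the bias term: the choice $\lambda \leq (\delta/(2C_2\sqrt{C_1}))^{2/(2-\epsilon_h-\epsilon(1-\epsilon_h/2))}$ is exactly what makes $C_2\sqrt{C_1}\,\lambda^{(2-\epsilon_h-\epsilon(1-\epsilon_h/2))/2} \leq \delta/2$. Combining the two halves via $\sqrt{x+y}\leq\sqrt{x}+\sqrt{y}$ gives $W_2(\mathcal{L}(\theta^{\lambda}_n),\pi_{\beta}) \leq \delta/2 + \delta/2 = \delta$.

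The only genuinely delicate point is the bookkeeping in the iteration-count bound: one must verify that the stated lower bound on $n$, which takes a $\max$ over $(2C_2\sqrt{C_1}/\delta)^{2/(2-\epsilon_h-\epsilon(1-\epsilon_h/2))}$ and $1/\lambda_{\max}$, indeed forces $C_0\lambda n$ to exceed $2\log(2C_2\rmKL^{1/2}(\pi^{\lambda}_0\|\pi_\beta)/\delta)$ for every admissible $\lambda \in (0,\lambda_{\max}]$. This works because when $\lambda$ is at its upper admissible value the relevant factor in $n$ is $1/\lambda_{\max}$, giving $\lambda n \gtrsim 1$, while when $\lambda$ is forced small by the bias requirement the compensating polynomial factor $(2C_2\sqrt{C_1}/\delta)^{2/(2-\epsilon_h-\epsilon(1-\epsilon_h/2))}$ in the $\max$ exactly offsets the smallness of $\lambda$; multiplying through and taking logarithms confirms the claim. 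Everything else is a direct quotation of Theorem~\ref{thm:mainthm} together with the Log-Sobolev-implies-Talagrand inequality, so no new analytic input is required beyond what the excerpt already supplies.
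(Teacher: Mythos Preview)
Your proposal is correct and matches the paper's proof essentially line for line: the paper also invokes Talagrand's inequality (via Assumption~\ref{asm:LSI}) to get $W_2(\mathcal{L}(\theta^{\lambda}_n),\pi_{\beta}) \leq (2C_{\rmLS})^{1/2}\rmKL(\pi^{\lambda}_n\|\pi_{\beta})^{1/2}$, sets $C_2 \coloneqq (2C_{\rmLS})^{1/2}$, and substitutes the bound \eqref{eq:KLub} from Theorem~\ref{thm:mainthm}. The paper does not spell out the verification of the $\lambda,n$ choices for the $\delta$-accuracy claim, but your sketch of that bookkeeping is sound.
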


\begin{remark}
Theorem \ref{thm:mainthm} provides a non-asymptotic convergence bound for the newly proposed kTULA algorithm \eqref{eq:ktula}-\eqref{eq:ktulah} in KL divergence with the rate of convergence equal to $2-\epsilon_h-\epsilon(1-\epsilon_h/2)$, which implies the convergence of kTULA in Wasserstein-2 distance with the rate equal to $1-\epsilon_h/2-\epsilon(1/2-\epsilon_h/4)$ as stated in Corollary \ref{crl}. We note that $\epsilon_h\in(0,1/2]$ arises from the design of the tamed coefficient $h_\lambda$ in \eqref{eq:ktulah}, while $\epsilon>0$ appears due to the application of Young's inequality in the proof (see \eqref{eq:F1ub}). We highlight that both $\epsilon_h\in(0,1/2]$ and $\epsilon>0$ can be chosen arbitrarily, allowing the rates of convergence in KL divergence and in Wasserstein-2 distance to be made arbitrarily close to 2 and 1, respectively. However, achieving higher rates of convergence would result in a worse dependence on the key parameters (including $d$ and $\beta$) of the constant $C_1$. 
\end{remark}

Furthermore, when $\beta>0$ is sufficiently large, $\pi_{\beta}$ defined in \eqref{eq:pibetaexp} concentrates around the minimizers of $u$ \cite{hwang}. Hence, the kTULA algorithm \eqref{eq:ktula}-\eqref{eq:ktulah} can be used to solve the optimization problem:
\[
\text{minimize} \quad \R^d \ni \theta \mapsto u(\theta). 
\]
A non-asymptotic convergence bound for the associated expected excess risk can be deduced using the result in Corollary \ref{crl} as presented below.

\begin{corollary}\label{crl:eer}
Let Assumptions \ref{asm:initialc}, \ref{asm:polylip}, \ref{asm:dissip}, and \ref{asm:LSI} hold. Then, for any $\beta>0$, $\epsilon_h \in (0,1/2]$, $\epsilon>0$, there exist positive constants $C_3, C_4, C_5$ such that, for any $n \in \N_0$, $0<\lambda\leq \lambda_{\max}$,
\[
\E[u( \theta_n^{\lambda})] - \inf_{\theta \in \R^d} u(\theta) \leq C_3 e^{-C_0\lambda n/2}+C_4\lambda^{1-\epsilon_h/2-\epsilon(1/2-\epsilon_h/4)}+C_5/\beta,
\]
where $C_3, C_4, C_5$ are given explicitly in Appendix \ref{appen:constexp}. Furthermore, for any $\delta>0$, if we choose 
\begin{align*}
\beta		&\geq \max\left\{1,\frac{9d^2}{\delta^2}, \frac{3d}{\delta}\log\left(\frac{K_H(1+4(\sqrt{b/a}+\sqrt{2d/K_H}))^l(b+1)(d+1)}{ad}\right)+\frac{6\log 2}{\delta}\right\},\\
\lambda	&\leq \min\{ (\delta/(3C_4))^{2/(2-\epsilon_h-\epsilon(1-\epsilon_h/2))},\lambda_{\max}\},\\
n 		&\geq (2/C_0)\max\{ (3C_4/\delta)^{2/(2-\epsilon_h-\epsilon(1-\epsilon_h/2))},1/\lambda_{\max}\}\log (3C_3/\delta),
\end{align*}
then $\E[u( \theta_n^{\lambda})] - \inf_{\theta \in \R^d} u(\theta) \leq \delta$.
\end{corollary}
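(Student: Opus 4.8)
The plan is to split the expected excess risk into a sampling error and a bias term,
\[
\E[u(\theta_n^{\lambda})] - \inf_{\theta\in\R^d}u(\theta)
= \Big(\E[u(\theta_n^{\lambda})] - \pi_{\beta}(u)\Big) + \Big(\pi_{\beta}(u) - \inf_{\theta\in\R^d}u(\theta)\Big),
\]
and to bound the first bracket by the Wasserstein-2 estimate of Corollary \ref{crl} and the second by the classical fact that the Gibbs measure $\pi_{\beta}$ concentrates around the minimizers of $u$ as $\beta\to\infty$. The two contributions produce, respectively, the $C_3 e^{-C_0\lambda n/2}+C_4\lambda^{1-\epsilon_h/2-\epsilon(1/2-\epsilon_h/4)}$ part and the $C_5/\beta$ part of the claimed bound.

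For the sampling error, I would first integrate $h$ along the segment joining $\theta$ and $\bar\theta$ and use the polynomial growth of $h$ from Assumption \ref{asm:polylip} to get a growth estimate of the form $|u(\theta)-u(\bar\theta)|\le c\,(1+|\theta|^{l+1}+|\bar\theta|^{l+1})|\theta-\bar\theta|$ for all $\theta,\bar\theta\in\R^d$, with $c$ depending only on $K_h$ and $l$. Taking an optimal $W_2$-coupling $(\theta_n^{\lambda},\tilde\theta)$ of $\mathcal L(\theta_n^{\lambda})$ and $\pi_{\beta}$, applying the Cauchy--Schwarz inequality, and invoking the uniform moment bounds $\sup_{0<\lambda\le\lambda_{\max}}\sup_{n\in\N_0}\E[|\theta_n^{\lambda}|^{2(l+1)}]<\infty$ obtained as part of the proof of Theorem \ref{thm:mainthm}, together with the finiteness of all moments of $\pi_{\beta}$ (a consequence of Assumption \ref{asm:dissip}), gives
\[
\big|\E[u(\theta_n^{\lambda})] - \pi_{\beta}(u)\big| \le \widetilde C\, W_2\big(\mathcal L(\theta_n^{\lambda}),\pi_{\beta}\big),
\]
with $\widetilde C$ explicit in $d,\beta,a,b,K_h,l$. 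Plugging in Corollary \ref{crl} and using $\sqrt{x+y}\le\sqrt x+\sqrt y$ to separate the exponentially decaying and $\lambda$-power parts yields the first two terms of the statement; the halving of both the convergence rate and the exponential constant relative to Theorem \ref{thm:mainthm} is exactly the square root in $W_2\le C_2\,\rmKL^{1/2}$.

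For the bias term I would use the standard Laplace-type argument (in the spirit of \cite{hwang} and \cite{raginsky}): under Assumption \ref{asm:dissip} and the Hessian growth bound of Assumption \ref{asm:polylip}, $\pi_{\beta}$ concentrates its mass in a ball of radius of order $\sqrt{b/a}+\sqrt{2d/K_H}$ around a minimizer of $u$, on which the curvature of $u$ is controlled through $K_H(1+|\theta|^{l})$; this produces
\[
\pi_{\beta}(u) - \inf_{\theta\in\R^d}u(\theta) \le \frac{C_5}{\beta},
\]
with $C_5$ of order $d\log\!\big(K_H(1+4(\sqrt{b/a}+\sqrt{2d/K_H}))^{l}(b+1)(d+1)/(ad)\big)$, which is precisely the logarithmic quantity appearing in the stated lower bound on $\beta$. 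Summing the two estimates gives the displayed inequality. For the quantitative ``moreover'' part, I would impose $C_4\lambda^{1-\epsilon_h/2-\epsilon(1/2-\epsilon_h/4)}\le\delta/3$, $C_3 e^{-C_0\lambda n/2}\le\delta/3$ and $C_5/\beta\le\delta/3$; the first two reproduce the stated restrictions on $\lambda$ and $n$, while the third — together with $\beta\ge1$ and the requirement that the $d$- and $\beta$-dependent moment constant $\widetilde C$ entering the Cauchy--Schwarz step stay controlled, which forces a $d/\sqrt{\beta}$-type quantity to be at most $\delta/3$, i.e.\ $\beta\ge 9d^2/\delta^2$ — yields the stated lower bound on $\beta$.

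The main obstacle I anticipate is not any individual estimate but the explicit bookkeeping: deriving the concentration bound for $\pi_{\beta}(u)-\inf u$ with constants sharp enough to reproduce the stated threshold on $\beta$, and carrying the uniform-in-$(n,\lambda)$ polynomial moment bounds, with their precise dependence on $d$ and $\beta$, through the coupling argument so that $C_3,C_4,C_5$ come out in the claimed explicit form.
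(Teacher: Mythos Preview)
Your approach is essentially the same as the paper's: the paper uses the identical two-term splitting and handles the sampling error by adapting \cite[Lemma 4.8]{lim2021nonasymptotic} (exactly your coupling plus Cauchy--Schwarz plus moment-bound argument via Corollary~\ref{crl}) and the bias term by adapting \cite[Lemma 4.9]{lim2021nonasymptotic} (your Laplace-type concentration), arriving at the same constants $C_3,C_4,C_5$. One small correction on the ``furthermore'' part: the condition $\beta\ge 9d^2/\delta^2$ does not arise from controlling the moment factor $\widetilde C$ in the coupling step, but from the fact that $C_5$ itself depends on $\beta$ through a $\tfrac{d}{2}\log(\beta b/d+1)$ term, so that solving $C_5(\beta)/\beta\le\delta/3$ requires absorbing a $\log\beta$ via an inequality of the type $\log\beta\le\sqrt{\beta}$, which produces the quadratic-in-$d/\delta$ threshold.
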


The proofs of Theorem \ref{thm:mainthm}, Corollary \ref{crl}, and Corollary \ref{crl:eer} are deferred in Section \ref{sec:mtproofs}.

\subsection{Related work and comparison}\label{sec:literaturereview}
In Theorem \ref{thm:mainthm} and Corollary \ref{crl}, we establish, under Assumptions~\ref{asm:initialc}-\ref{asm:LSI}, non-asymptotic convergence bounds in KL divergence and in Wasserstein distance between the law of kTULA \eqref{eq:ktula}-\eqref{eq:ktulah} and $\pi_{\beta}$ \eqref{eq:pibetaexp}, respectively. In particular, we show that the rate of convergence in KL divergence is $2-\epsilon_h-\epsilon(1-\epsilon_h/2)$, where $\epsilon_h\in(0,1/2]$ and $\epsilon>0$ can be chosen arbitrarily, while the rate in Wasserstein-2 distance is $1-\epsilon_h/2-\epsilon(1/2-\epsilon_h/4)$. In this section, we compare the aforementioned results with those obtained in \cite{neufeld2025non}, \cite{lytras2025taming}, and \cite{bao2024}.

In \cite{neufeld2025non}, the authors proposed the so-called modified tamed unadjusted Langevin algorithm (mTULA) to sample from a high-dimensional target distribution $\pi_{\beta}$. \cite[Theorem 2.9 and Theorem 2.10]{neufeld2025non} provide non-asymptotic error bounds in Wasserstein-1 and Wasserstein-2 distances between the law of mTULA and $\pi_{\beta}$ with rates of convergence being $1$ and $1/2$, respectively. These results are obtained under the conditions that the initial value of mTULA has a finite $r_*$-th moment with $r_*>0$ \cite[Assumption 1]{neufeld2025non}, that $h$ satisfies a polynomial Lipschitz condition \cite[Assumption 2]{neufeld2025non} and a convexity at infinity condition \cite[Assumption 3]{neufeld2025non}, and that $H$ satisfies a polynomial Lipschitz condition \cite[Assumption 4]{neufeld2025non}. Our Corollary~\ref{crl} improves the rate of convergence in Wasserstein-2 distance obtained in \cite[Theorem 2.10]{neufeld2025non} from $1/2$ to $1-\epsilon_h/2-\epsilon(1/2-\epsilon_h/4)$, which, e.g., equals to $7/10$ if we set $\epsilon_h = 1/2$ and $\epsilon= 2/15$. One can further improve the rate to a value which is arbitrarily close to 1 but with a worse dependence of $C_1$ on the dimension. This improvement is achieved under our Assumptions~\ref{asm:initialc}-\ref{asm:LSI}. While Assumption~\ref{asm:initialc} imposes additional conditions on $\mathcal{L}(\theta_0)$ and hence is stronger than \cite[Assumption 1]{neufeld2025non}, Assumptions~\ref{asm:dissip} and \ref{asm:LSI} are weaker conditions compared to \cite[Assumption 3]{neufeld2025non}. In addition, Assumption~\ref{asm:polylip} is similar to \cite[Assumption 4]{neufeld2025non}.

Next, we compare with \cite{lytras2025taming}. \cite[Eq.\ (3) and (4)]{lytras2025taming} proposes the splitted tamed unadjusted Langevin algorithm (sTULA). \cite[Theorem 5.5 and Corollary 5.7]{lytras2025taming} provide convergence results for sTULA in KL divergence and in Wasserstein-2 distance with the rates of convergence being $1$ and $1/2$, respectively. These results are obtained under \cite[A1-A4, and B1]{lytras2025taming}. More precisely, \cite[A1]{lytras2025taming} imposes growth conditions on (the derivatives) of $h$ while \cite[A2]{lytras2025taming} imposes a polynomial Lipschitz condition on $h$. \cite[A3]{lytras2025taming} imposes a dissipativity condition on $h$ and \cite[A4]{lytras2025taming} imposes certain conditions on the initial value of sTULA. \cite[B1]{lytras2025taming} assumes that $\pi_\beta$ satisfies a log-Sobolev inequality. Our Theorem \ref{thm:mainthm} and Corollary \ref{crl} improve the rates of convergence obtained in \cite[Theorem 5.5 and Corollary 5.7]{lytras2025taming} under a weaker condition (Assumption~\ref{asm:polylip}) compared to \cite[A1]{lytras2025taming}, as the polynomial Lipschitz condition in Assumption~\ref{asm:polylip} can be implied by \cite[A1]{lytras2025taming}. We note that our Assumptions~\ref{asm:initialc}, \ref{asm:dissip}, and \ref{asm:LSI} are the same as \cite[A4, A3, and B1]{lytras2025taming}.

Finally, we compare with \cite{bao2024}. \cite[Theorem 1.9]{bao2024} provides a non-asymptotic convergence result in Wasserstein-2 distance for the tamed algorithm \cite[Eq. (1.24)]{bao2024} with the rate of convergence equal to 1. This result is established under local Lipschitz conditions of $h$ and $\nabla h$ \cite[Eq. (1.11) and {H6}]{bao2024}, a convexity at infinity condition \cite[Eq. (1.19)]{bao2024}, and the condition that $\beta>0$ is sufficiently small. We note that \cite[Theorem 1.9]{bao2024} can also be obtained for our kTULA algorithm \eqref{eq:ktula}-\eqref{eq:ktulah}, which improves the rate of convergence obtained in Corollary \ref{crl} from $1-\epsilon_h/2-\epsilon(1/2-\epsilon_h/4)$ (which is arbitrarily close to 1) to exact 1. However, this improvement is achieved under stronger assumptions on $h$ and $\beta$. Namely, it requires a convexity at infinity condition on $h$ \cite[Eq. (1.19)]{bao2024} which would imply our Assumptions~\ref{asm:dissip} and \ref{asm:LSI}, and hence a stronger condition. In addition, it requires $\beta>0$ to be sufficiently small while we require only $\beta>0$. Hence, \cite[Theorem 1.9]{bao2024} cannot be applied to optimization problems where $\beta>0$ usually takes large values, whereas our Corollary \ref{crl} can be used to provide a theoretical guarantee for kTULA~\eqref{eq:ktula}-\eqref{eq:ktulah} to solve optimization problems as shown in Corollary \ref{crl:eer}. Regarding the other assumptions, \cite[Theorem 1.9]{bao2024} requires the initial value $\theta_0$ to have finite polynomial moments which is weaker than our Assumption~\ref{asm:initialc}, however, the conditions in Assumption~\ref{asm:initialc} can be easily satisfied by, e.g., constants and Gaussian random variables that are typically used for initializing algorithms. Moreover, \cite[Eq. (1.11) and {H6}]{bao2024} are conditions comparable to our Assumption~\ref{asm:polylip}.

\section{Applications}\label{sec:app}
In this section, we apply the kTULA algorithm \eqref{eq:ktula}-\eqref{eq:ktulah} to solve the problem of sampling from a double-well distribution and an optimization problem involving a neural network. This demonstrates the wide applicability of our main results. 
\subsection{Sampling from a double-well potential distribution}
We consider the problem of sampling from a target distribution $\pi_{\beta}$ of the form \eqref{eq:pibetaexp} with the potential given by 
\begin{equation}\label{eq:dwpotential}
u(\theta)=|\theta|^4/4-|\theta|^2/2,\quad \theta\in\R^d.
\end{equation}

Our goal is to sample from $\pi_{\beta}$ using the kTULA algorithm \eqref{eq:ktula}-\eqref{eq:ktulah}. We can show that $\pi_{\beta}$ satisfies our Assumptions \ref{asm:polylip}-\ref{asm:LSI}. If we further choose the initial value $\theta_0$ to be, e.g., a constant or a Gaussian random variable, then Assumption \ref{asm:initialc} is satisfied, and hence Corollary \ref{crl} can be applied to provide a theoretical guarantee for the sampling behavior of kTULA.

\begin{proposition}\label{prop:dw} The target distribution $\pi_{\beta}$ with the potential $u$ defined in \eqref{eq:dwpotential} satisfies Assumptions \ref{asm:polylip}-\ref{asm:LSI}.
\end{proposition}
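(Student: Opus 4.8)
The plan is to verify Assumptions~\ref{asm:polylip}, \ref{asm:dissip}, \ref{asm:LSI} one at a time. Since the double-well potential $u$ is $C^\infty$ and $u(\theta)\sim|\theta|^4/4$ as $|\theta|\to\infty$, the standing hypotheses of Section~\ref{sec:main} ($u\in C^3$, $\int_{\R^d}e^{-\beta u}\,\rmd\theta<\infty$) hold trivially, and the first two assumptions reduce to elementary computations with $h(\theta)=\nabla u(\theta)=(|\theta|^2-1)\theta$ and $H(\theta)=\nabla^2 u(\theta)=(|\theta|^2-1)\cI_d+2\theta\theta^{\cT}$. For Assumption~\ref{asm:polylip} I would write $H(\theta)-H(\overline\theta)=(|\theta|^2-|\overline\theta|^2)\cI_d+2\bigl(\theta(\theta-\overline\theta)^{\cT}+(\theta-\overline\theta)\overline\theta^{\cT}\bigr)$ and bound each summand by $(|\theta|+|\overline\theta|)|\theta-\overline\theta|$, using $\bigl||\theta|^2-|\overline\theta|^2\bigr|\le(|\theta|+|\overline\theta|)|\theta-\overline\theta|$; this gives the polynomial Lipschitz bound with $L=3$, $l=2$, while $\|H(\theta)\|\le3(1+|\theta|^2)$ and $|h(\theta)|\le2(1+|\theta|^3)$ are immediate, so $K_H=3$, $K_h=2$ — precisely as anticipated in Remark~\ref{rmk:dwoptconst}. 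For Assumption~\ref{asm:dissip}, $\langle h(\theta),\theta\rangle=|\theta|^4-|\theta|^2$, and $|\theta|^4-|\theta|^2-(|\theta|^2-1)=(|\theta|^2-1)^2\ge0$ yields $\langle h(\theta),\theta\rangle\ge|\theta|^2-1$, i.e.\ the condition holds with $a=b=1$.

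The substantive step is Assumption~\ref{asm:LSI}. Since $2\theta\theta^{\cT}\succeq0$ we have $H(\theta)\succeq(|\theta|^2-1)\cI_d$, hence $H(\theta)\succeq-\cI_d$ on $\R^d$ and $H(\theta)\succeq\cI_d$ whenever $|\theta|\ge\sqrt2$; in particular $u$ is strongly convex outside the ball $\{|\theta|\le\sqrt2\}$. I would then exploit the radial form $u(\theta)=f(|\theta|^2)$ with $f(r)=r^2/4-r/2$ and replace $f$ on $[0,2]$ by a $C^2$ profile $\tilde f$ that matches $f$, $f'$, $f''$ at $r=2$ and satisfies $\tilde f'\ge\tfrac14$, $\tilde f''\ge0$ on $[0,2]$ — for instance $\tilde f'(r)=\tfrac14+r^4/64$, $\tilde f(r)=r/4+r^5/320-3/5$ — while keeping $\tilde f=f$ on $[2,\infty)$. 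Setting $\tilde u(\theta)=\tilde f(|\theta|^2)$ one computes $\nabla^2\tilde u(\theta)=2\tilde f'(|\theta|^2)\cI_d+4\tilde f''(|\theta|^2)\theta\theta^{\cT}\succeq2\tilde f'(|\theta|^2)\cI_d\succeq\tfrac12\cI_d$ on all of $\R^d$, and $\tilde u\in C^2(\R^d)$. Because $\tilde u$ coincides with $u$ on $\{|\theta|\ge\sqrt2\}$, the function $W\coloneqq\beta(u-\tilde u)$ is continuous with support in $\{|\theta|\le\sqrt2\}$, hence bounded. The Bakry--Émery criterion then furnishes a Log-Sobolev inequality for the probability measure $\propto e^{-\beta\tilde u}$ (with constant $\le2/\beta$), and the Holley--Stroock perturbation principle transfers it to $\pi_{\beta}\propto e^{-\beta\tilde u}e^{-W}$, with constant $C_{\rmLS}\le(2/\beta)e^{\sup W-\inf W}<\infty$, which is exactly Assumption~\ref{asm:LSI}.

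I expect the construction of the globally $\tfrac12$-strongly convex $C^2$ radial modification $\tilde u$ to be the only delicate point: one must simultaneously keep $\tilde f'$ bounded away from $0$ (which controls the $d-1$ tangential Hessian directions) and $\tilde f''\ge0$ (which controls the remaining radial direction), while matching $f$ to second order at the gluing radius so that $\tilde u\in C^2(\R^d)$. The explicit choice above does this, and the rest — the growth and Lipschitz estimates of the first two assumptions, the smoothness of $\tilde u$ at the origin and at $|\theta|=\sqrt2$, and the bookkeeping of the $\mathrm{osc}(W)$ term — is routine.
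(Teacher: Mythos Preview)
Your proof is correct. The verifications of Assumptions~\ref{asm:polylip} and~\ref{asm:dissip} are essentially the same as the paper's (you obtain $a=b=1$ via the neat identity $(|\theta|^2-1)^2\ge0$, whereas the paper does a case split and lands on $a=1/2$, $b=9/4$; either pair works).

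For Assumption~\ref{asm:LSI} the two routes genuinely differ. The paper verifies a two-point convexity-at-infinity inequality, namely $\langle h(\theta)-h(\overline\theta),\theta-\overline\theta\rangle\ge\bigl((|\theta|^2+|\overline\theta|^2)/2-1\bigr)|\theta-\overline\theta|^2$, and then invokes \cite[Theorem~5.3]{lytras2025taming} as a black box. You instead build an explicit globally $\tfrac12$-strongly convex $C^2$ radial modification $\tilde u$ agreeing with $u$ outside $\{|\theta|\le\sqrt2\}$ and apply Bakry--\'Emery plus Holley--Stroock directly. Your argument is more self-contained and yields an explicit (if crude) bound $C_{\rmLS}\le(2/\beta)\exp(\mathrm{osc}\,W)$, at the cost of having to engineer the profile $\tilde f$ carefully; the paper's route is shorter on the page but defers the analytic work to the cited theorem, which ultimately rests on the same perturbation machinery. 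Both are standard and valid.
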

\begin{proof}See Appendix \ref{prop:dwproof}.
\end{proof}


\subsection{Optimization involving a neural network in the transfer learning setting} We consider the following optimization problem:
\begin{equation}\label{eq:optobj}
\text{minimize} \quad \R^d \ni \theta \mapsto u(\theta) \coloneqq \E\left[(Y-\mathfrak{N}(\theta,Z))^2\right] + (\eta/6) |\theta|^6,
\end{equation}
where $X=(Z,Y)\in\R^m$ is the data point with $Y$ being an $\R$-valued target random variable and $Z$ being an $\R^{m-1}$-valued input random variable, $\eta>0$ is a regularization constant, and where $\mathfrak{N}:\R^d\times\R^{m-1}\to\R$ is a single-hidden-layer feed-forward neural network (1LFN) given by
\begin{equation}\label{eq:1lfn}
\mathfrak{N}(\theta,z) \coloneqq \sum_{i=1}^{d_1}W_1^{1i}\sigma_0(\langle c_0^{i\cdot}, z\rangle+b_0^i)
\end{equation}
with $z\in\R^{m-1}$ being the input vector, $W_1\in\R^{1\times d_1}$ being the weight parameter, $c_0\in\R^{d_1\times(m-1)}$ being the fixed (pretrained nonzero) input weight, $b_0\in\R^{d_1}$ being the bias parameter, and $\sigma_0(x) = x/(1+e^{-x}), x\in\R$, being the sigmoid linear unit activation function \cite{atto2008smooth}. We note that $\theta = ([W_1],b_0)\in\R^d$, $d=2d_1$, is the parameter of the optimization problem, where $[A]$ denotes the vector of all elements in a given matrix $A$.

We aim to solve \eqref{eq:optobj} using the kTULA algorithm \eqref{eq:ktula}-\eqref{eq:ktulah}. By showing that the objective function $u$ of \eqref{eq:optobj} satisfies Assumptions \ref{asm:polylip}-\ref{asm:LSI} together with an appropriately chosen $\theta_0$ that satisfies Assumption \ref{asm:initialc}, we can use Corollary \ref{crl:eer} to provide a theoretical guarantee for kTULA to obtain approximate minimzers of $u$ in \eqref{eq:optobj}.

\begin{proposition}\label{prop:opt} The objective function $u$ defined in \eqref{eq:optobj} satisfies Assumptions \ref{asm:polylip}-\ref{asm:LSI}.
\end{proposition}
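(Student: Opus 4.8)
The plan is to verify Assumptions~\ref{asm:polylip}, \ref{asm:dissip} and \ref{asm:LSI} one at a time for the objective $u$ in \eqref{eq:optobj}, following the scheme already used in the proof of Proposition~\ref{prop:dw}. Everything rests on two structural observations. First, the activation $\sigma_0(x)=x/(1+e^{-x})$ belongs to $C^\infty(\R)$, satisfies $\sigma_0(0)=0$, is globally Lipschitz, and has \emph{uniformly bounded} first, second and third derivatives; in particular $|\sigma_0(y)|\le c|y|$. Second, in \eqref{eq:1lfn} the input weights $c_0$ are fixed, so that $\mathfrak{N}(\theta,z)$ is \emph{affine} in the output block $W_1$ and depends on the bias block $b_0$ only through the compositions $\sigma_0(\langle c_0^{i\cdot},z\rangle+b_0^i)$. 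Combining these, one checks that $|\mathfrak{N}(\theta,z)|$ and $|\nabla_\theta\mathfrak{N}(\theta,z)|$ are $\mathcal{O}\big((1+|\theta|)(1+|z|)\big)$, whereas $\|\nabla_\theta^2\mathfrak{N}(\theta,z)\|$ and $\|\nabla_\theta^3\mathfrak{N}(\theta,z)\|$ are $\mathcal{O}(1+|\theta|)$ uniformly in $z$ --- the $z$-growth entering only through the term $\partial\mathfrak{N}/\partial W_1^{1i}=\sigma_0(\langle c_0^{i\cdot},z\rangle+b_0^i)$, all higher $\theta$-derivatives involving only the bounded functions $\sigma_0',\sigma_0'',\sigma_0'''$. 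Throughout I will use that the data $X=(Z,Y)$ has finite second moments, $\E[|Z|^2]+\E[Y^2]<\infty$, which is needed already for $u$ in \eqref{eq:optobj} to be finite.

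For Assumption~\ref{asm:polylip} I would first justify differentiating $(Y-\mathfrak{N}(\theta,Z))^2$ three times under the expectation by dominated convergence, using the bounds above together with $|Y-\mathfrak{N}(\theta,Z)|\le|Y|+c(1+|\theta|)(1+|Z|)$ and the second-moment hypothesis; this gives $u\in C^3$ with $h(\theta)=-2\E[(Y-\mathfrak{N})\nabla_\theta\mathfrak{N}]+\eta|\theta|^4\theta$ and analogous expressions for $H=\nabla^2 u$ and $\nabla^3 u$. The triangle and product rules then yield $|h(\theta)|\le K_h(1+|\theta|^{5})$, $\|H(\theta)\|\le K_H(1+|\theta|^{4})$ and $\|\nabla^3 u(\theta)\|\le C(1+|\theta|)^{3}$, the leading powers being produced by the regulariser $(\eta/6)|\theta|^6$, whose gradient equals $\eta|\theta|^4\theta$ and whose Hessian equals $\eta|\theta|^4\cI_d+4\eta|\theta|^2\theta\theta^{\cT}$, while the network contributes only lower-order terms of size $\mathcal{O}(|\theta|^2)$. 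Feeding the last bound into the mean value theorem along the segment joining $\bar\theta$ and $\theta$ gives $\|H(\theta)-H(\bar\theta)\|\le L(1+|\theta|+|\bar\theta|)^{3}|\theta-\bar\theta|$, so Assumption~\ref{asm:polylip} holds with $l=4$. Assumption~\ref{asm:dissip} is then immediate: since $\langle\eta|\theta|^4\theta,\theta\rangle=\eta|\theta|^6$ and $|{-}2\E[(Y-\mathfrak{N})\nabla_\theta\mathfrak{N}]|\le C(1+|\theta|^2)$, we get $\langle h(\theta),\theta\rangle\ge\eta|\theta|^6-C(1+|\theta|^3)$, and for any fixed $a>0$ the continuous function $\eta|\theta|^6-C(1+|\theta|^3)-a|\theta|^2$ tends to $+\infty$, hence is bounded below by some $-b$.

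For Assumption~\ref{asm:LSI} the key point is a Hessian lower bound at infinity. Since $\nabla^2\big((\eta/6)|\theta|^6\big)=\eta|\theta|^4\cI_d+4\eta|\theta|^2\theta\theta^{\cT}\succeq\eta|\theta|^4\cI_d$ and $\|\nabla^2_\theta\E[(Y-\mathfrak{N})^2]\|\le C(1+|\theta|^2)$, one has $H(\theta)\succeq\big(\eta|\theta|^4-C(1+|\theta|^2)\big)\cI_d$, so $H(\theta)\succeq\kappa\cI_d$ whenever $|\theta|\ge R$, for suitable $\kappa,R>0$; that is, $u$ is strongly convex outside the ball $B_R$ and, being $C^2$, has Hessian bounded below on $B_R$. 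From here I would obtain the Log-Sobolev inequality for $\pi_\beta$ by the standard perturbation argument: write $u=\bar u+\psi$ with $\bar u\in C^2$ globally $\kappa$-strongly convex and coinciding with $u$ on $\{|\theta|\ge R+1\}$, so that $\psi$ is continuous with compact support and hence $\|\psi\|_\infty<\infty$; the Bakry--\'Emery criterion gives a Log-Sobolev inequality for $e^{-\beta\bar u}$, and the Holley--Stroock perturbation principle transfers it to $\pi_\beta\propto e^{-\beta\bar u}e^{-\beta\psi}$. Alternatively one may invoke directly the ``strongly convex at infinity implies LSI'' criterion already employed for such potentials in \cite{lytras2025taming,neufeld2025non}. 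This yields Assumption~\ref{asm:LSI}.

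I expect the bulk of the work to be the routine but bookkeeping-heavy derivative estimates of the second paragraph --- differentiating under the expectation and tracking each product-rule term carefully enough to pin down the exponents $l=4$ and $l-1=3$ --- while the one genuinely delicate point is the convexification in the Log-Sobolev step: producing the smooth, globally $\kappa$-strongly convex extension $\bar u$ that agrees with $u$ outside $B_{R+1}$ (or, equivalently, quoting the corresponding lemma from the literature), which is exactly where strong convexity at infinity is upgraded to a bona fide Log-Sobolev inequality.
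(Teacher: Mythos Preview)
Your proposal is correct and follows essentially the same strategy as the paper: verify the polynomial growth and Lipschitz bounds on $h$ and $H$ coming from the quadratic loss plus the sextic regulariser (yielding $l=4$), obtain dissipativity from the $\eta|\theta|^6$ term dominating the lower-order network contribution, and deduce LSI from strong convexity at infinity. The minor differences are packaging: the paper writes out every first and second partial derivative explicitly and bounds them term by term (with explicit constants depending on $d_1$, $\eta$, $c_0$, and moments of $X$), whereas you argue structurally via bounds on $\sigma_0,\sigma_0',\sigma_0'',\sigma_0'''$ and the mean value theorem for $\|H(\theta)-H(\bar\theta)\|$; and for LSI the paper verifies the two-point monotonicity condition \cite[Eq.~(7)]{lytras2025taming} and invokes \cite[Theorem 5.3]{lytras2025taming}, while your primary route is the equivalent one-point Hessian lower bound followed by Bakry--\'Emery plus Holley--Stroock---the alternative you mention at the end is exactly what the paper does.
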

\begin{proof}See Appendix \ref{prop:optproof}.
\end{proof}

\section{Proof of main results}\label{sec:mtproofs}
In this section, we provide the proofs for Theorem \ref{thm:mainthm}, Corollary \ref{crl}, and Corollary \ref{crl:eer}. We first provide an overview of our proof techniques for Theorem \ref{thm:mainthm}. Then, we proceed to introduce a couple of (auxiliary) stochastic processes to facilitate the convergence analysis. Next, we establish moment estimates for kTULA \eqref{eq:ktula}-\eqref{eq:ktulah}. Finally, we provide the detailed proofs of our main results. All the proofs for the results presented in this section are postponed to Appendix \ref{sec:mtproofsapdx}.

\subsection{Overview of proof techniques for Theorem \ref{thm:mainthm}} 
Recall $h_{\lambda}$ defined in \eqref{eq:ktulah}. Our kTULA algorithm~\eqref{eq:ktula}-\eqref{eq:ktulah} possesses the following essential properties:
\begin{enumerate}
[label=\textbf{\upshape(P\arabic*)}]
\setcounter{enumi}{2}
    \item $h_{\lambda}$ satisfies a dissipativity property, i.e., Assumption \ref{asm:dissip}, see Lemma \ref{lem:hlambdaestimates}-\ref{lem:hlambdaestimates1}, \label{enum:me1}
    \item $h_{\lambda}$ is growing linearly, see Lemma \ref{lem:hlambdaestimates}-\ref{lem:hlambdaestimates2}, \label{enum:me2}
    \item $h_{\lambda}$ satisfies a global Lipschitz condition with the Lipschitz constant being $\mathcal{O}( \lambda^{-\epsilon_h})$, $\epsilon_h\in(0,1/2]$, see Lemma~\ref{lem:hlambdaestimates}-\ref{lem:hlambdaestimates3},  \label{enum:ca1}
    \item The approximation of $h_\lambda$ to $h$ is of order $\lambda$, see Lemma \ref{lem:hlambdaestimates}-\ref{lem:hlambdaestimates4}.  \label{enum:ca2}
\end{enumerate}
The first two properties \ref{enum:me1} and \ref{enum:me2} enable us to obtain uniform in time moment estimates of kTULA (see Lemma~\ref{lem:2ndpthmmt}), which also enable a rigorous derivation of the functional equality \eqref{eq:changeofKLint} in the spirit of \cite[Section 9]{lytras2025taming} (see Appendix \ref{appen:changeintnder} for more details), whereas the last two properties \ref{enum:ca1} and \ref{enum:ca2} are used in the convergence analysis. 

To establish a non-asymptotic convergence bound for kTULA in KL divergence with an improved rate of convergence $2-\overline{\epsilon}$, $\overline{\epsilon}>0$, we start from \eqref{eq:changeofKLint}, which can be obtained by using the same proof as that of \cite[Theorem 7.8]{lytras2025taming}, and adopt the decomposition proposed in \cite{mou2022improved} to obtain \eqref{eq:decomp} and \eqref{eq:decompterms}. Our task then reduces to upper bound each of the terms on the RHS of \eqref{eq:decomp}. We note that the upper estimates for (the expectations of) the second to the fourth terms (on the RHS of \eqref{eq:decomp}) can be obtained by applying standard techniques as shown in the proof of Lemma~\ref{lem:decomptermsF2n3} and \ref{lem:decomptermsrt}, and their upper bounds are all proved to be $C\lambda^2$ with $C>0$. To obtain an upper estimate for the first term on the RHS of \eqref{eq:decomp}, we provide a novel proof that extends the approach adopted in \cite{mou2022improved} to the case of super-linearly growing $h$. More precisely, we establish in Lemma \ref{lem:ubforfisherinfo} a uniform upper estimate for the Fisher information $J_n$ of the distribution of the $n$-th iterate of kTULA (denoted by $\pi^{\lambda}_n$). This can then be used to obtain an upper bound for the first term on the RHS of \eqref{eq:decomp} as presented in Lemma \ref{lem:decomptermsF1}, which is of the form $C\lambda^{2-\overline{\epsilon}}$ with $C,\overline{\epsilon}>0$. We highlight that all these results are achieved due to the design of kTULA in \eqref{eq:ktula} and \eqref{eq:ktulah} possessing the properties \ref{enum:me1}-\ref{enum:ca2}. Finally, we use all these estimates to obtain an upper bound for the second term on the RHS \eqref{eq:changeofKLint} via \eqref{eq:decomp}, and the application of Assumption \ref{asm:LSI} yields the desired result.

\subsection{Auxiliary processes} Fix $\beta>0$. Consider the Langevin SDE $(Z_t)_{t \geq 0}$ given by
\begin{equation} \label{eq:sde}
Z_0 \coloneqq  \theta_0, \quad \rmd Z_t=-h\left(Z_t\right) \rmd t+ \sqrt{2\beta^{-1}} \rmd B_t,
\end{equation}
where $(B_t)_{t \geq 0}$ is a $d$-dimensional Brownian motion with its completed natural filtration denoted by $(\mathcal{F}_t)_{t\geq 0}$. We assume that $(\mathcal{F}_t)_{t\geq 0}$ is independent of $\sigma(\theta_0)$. Moreover, it is a well-known result that, by Remark \ref{rmk:polyliph} (under Assumption \ref{asm:polylip}) and by Assumption \ref{asm:dissip}, the Langevin SDE \eqref{eq:sde} admits a unique strong solution, which is adapted to $\mathcal{F}_t \vee \sigma(\theta_0)$, $t\geq 0$, see, e.g., \cite[Theorem 1]{krylovsolmontone}. The $2p$-th moment estimate of the Langevin SDE \eqref{eq:sde} with $p \in \N$ has been established in \cite[Lemma A.1]{lim2021nonasymptotic}, which can be used to deduce the $2p$-th moment estimate of $\pi_{\beta}$ \eqref{eq:pibetaexp}.

For each $\lambda>0$, define $ B^{\lambda}_t \coloneqq B_{\lambda t}/\sqrt{\lambda}, t \geq 0$. We note that $( B^{\lambda}_t )_{t\geq 0}$ is a $d$-dimensional standard Brownian motion. Its completed natural filtration is denoted by $(\mathcal{F}^{\lambda}_t)_{t \geq 0}$ with $\mathcal{F}^{\lambda}_t\coloneqq \mathcal{F}_{\lambda t}$ for each $t\geq 0$, which is also independent of $\mathcal{G}_{\infty} \vee \sigma(\theta_0)$. Then, 
we denote by $(\overline{\theta}^{\lambda}_t)_{t \geq 0 }$ the continuous-time interpolation of kTULA \eqref{eq:ktula}-\eqref{eq:ktulah} given by
\begin{equation}\label{eq:ktulaproc}
\rmd \overline{\theta}^{\lambda}_t=-\lambda h_{\lambda}(\overline{\theta}^{\lambda}_{\lfrf{t}})\, \rmd t
+ \sqrt{2\lambda\beta^{-1}} \rmd B^{\lambda}_t
\end{equation}
with the initial condition $\bar{\theta}^{\lambda}_0\coloneqq \theta_0$. We note that the law of the interpolated process \eqref{eq:ktulaproc} coincides with the law of kTULA \eqref{eq:ktula}-\eqref{eq:ktulah} at grid points, i.e., $\mathcal{L}(\overline{\theta}^{\lambda}_n) =\mathcal{L}(\theta^{\lambda}_n) $, for all $n\in\N_0$. We denote by $\pi^{\lambda}_t$ the density of $\mathcal{L}(\overline{\theta}^{\lambda}_t) $ for all $n\in\N_0$, $t\in[n,n+1]$.

\subsection{Moment estimates}
Recall $h_{\lambda}$ defined in \eqref{eq:ktulah}. We first investigate the properties of $h_{\lambda}$ which are useful in our analysis. The results are presented in the following lemma.
\begin{lemma}\label{lem:hlambdaestimates} Let Assumptions \ref{asm:polylip} and \ref{asm:dissip} hold. Then, we obtain the following:
\begin{enumerate}
\item\label{lem:hlambdaestimates1} For any $\theta\in\R^d$, $0<\lambda <1$, and $\epsilon_h \in (0,1/2]$,
\[
\langle h_{\lambda}(\theta),\theta\rangle\geq a |\theta|^2-b.
\] 
\item\label{lem:hlambdaestimates2} For any $\theta\in\R^d$, $0<\lambda <1$, and $\epsilon_h \in (0,1/2]$,
\[
|h_{\lambda}(\theta)|\leq 2a |\theta|+ 2K_h \lambda^{-1/2} , \quad |h_{\lambda}(\theta)|\leq (2a+K_h)(1+ |\theta|^{l+1}).
\] 
\item\label{lem:hlambdaestimates3} For any $\theta, \overline{\theta}\in\R^d$, $0<\lambda <1$, and $\epsilon_h \in (0,1/2]$,
\begin{align*}
| h_{\lambda}(\theta)-  h_{\lambda}(\overline{\theta})| 			&\leq \cL_0\lambda^{-\epsilon_h}|\theta-\overline{\theta}|,\\
\|\nabla h_{\lambda}(\theta)-\nabla h_{\lambda}(\overline{\theta})\|	&\leq \cL_{\nabla,\epsilon_h} (1+|\theta|+|\overline{\theta}|)^{(l+1)(1/\epsilon_h+1)-2}|\theta-\overline{\theta}|,
\end{align*}
where $\cL_0 \coloneqq 2a+4K_H+(l+1)(2K_h+a)$ and $\cL_{\nabla,\epsilon_h}\coloneqq (10\sqrt{2}+4\epsilon_h^{-1})(l+1)^2\max\{K_H,L,\\K_h,a\}$. 
\item\label{lem:hlambdaestimates4} For any $\theta\in\R^d$, $0<\lambda <1$, and $\epsilon_h \in (0,1/2]$,
\[
|h(\theta)-h_{\lambda}(\theta)|^2\leq 4 \lambda^2(K_h+a)^2(1+ |\theta|^{2(l+1)(1+1/\epsilon_h)}).
\] 
\end{enumerate}
\end{lemma}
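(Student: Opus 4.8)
\textbf{Proof proposal for Lemma~\ref{lem:hlambdaestimates}.}
The plan is to prove the four items by direct computation from the explicit formula \eqref{eq:ktulah}, writing $h_\lambda(\theta) = a\theta + g_\lambda(\theta)$ where $g_\lambda(\theta) \coloneqq (h(\theta)-a\theta)/(1+\lambda|\theta|^{(l+1)/\epsilon_h})^{\epsilon_h}$, so that the ``splitting trick'' isolates the dissipative linear part $a\theta$ from a tamed remainder. For item \ref{lem:hlambdaestimates1}, I would write $\langle h_\lambda(\theta),\theta\rangle = a|\theta|^2 + \langle g_\lambda(\theta),\theta\rangle$ and observe that, by Assumption~\ref{asm:dissip}, $\langle h(\theta)-a\theta,\theta\rangle = \langle h(\theta),\theta\rangle - a|\theta|^2 \geq -b$; if this quantity is nonnegative, dividing by the taming factor $(1+\lambda|\theta|^{(l+1)/\epsilon_h})^{\epsilon_h}\geq 1$ only shrinks it (staying $\geq 0\geq -b$), while if it lies in $[-b,0)$, dividing by a factor $\geq 1$ brings it closer to $0$, hence still $\geq -b$. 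Either way $\langle h_\lambda(\theta),\theta\rangle \geq a|\theta|^2 - b$.

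For item \ref{lem:hlambdaestimates2}, the first bound follows from $|g_\lambda(\theta)| \leq |h(\theta)-a\theta|/(1+\lambda|\theta|^{(l+1)/\epsilon_h})^{\epsilon_h}$ together with the growth bound $|h(\theta)-a\theta| \leq a|\theta| + K_h(1+|\theta|^{l+1})$ from Assumption~\ref{asm:polylip}; the key point is that $(1+\lambda|\theta|^{(l+1)/\epsilon_h})^{\epsilon_h} \geq \max\{1,\lambda^{\epsilon_h}|\theta|^{l+1}\} \geq \lambda^{\epsilon_h}|\theta|^{l+1}$, and also $\geq \lambda^{1/2}|\theta|^{l+1}$ since $\epsilon_h\leq 1/2$ and $\lambda<1$, so $|\theta|^{l+1}/(1+\lambda|\theta|^{(l+1)/\epsilon_h})^{\epsilon_h} \leq \lambda^{-1/2}$; combining with $|\theta|/(\cdots) \leq |\theta|$ and $1/(\cdots)\leq 1 \leq \lambda^{-1/2}$ gives $|h_\lambda(\theta)| \leq a|\theta| + a|\theta| + 2K_h\lambda^{-1/2} = 2a|\theta| + 2K_h\lambda^{-1/2}$ after collecting terms. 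The second bound is cruder: since the taming factor is $\geq 1$, $|h_\lambda(\theta)| \leq a|\theta| + |h(\theta)-a\theta| \leq 2a|\theta| + K_h(1+|\theta|^{l+1}) \leq (2a+K_h)(1+|\theta|^{l+1})$.

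For item \ref{lem:hlambdaestimates3}, the global Lipschitz bound for $h_\lambda$: I would split $h_\lambda(\theta)-h_\lambda(\bar\theta) = a(\theta-\bar\theta) + (g_\lambda(\theta)-g_\lambda(\bar\theta))$, contributing $a|\theta-\bar\theta|$, and handle $g_\lambda$ either via the mean value inequality controlling $\|\nabla g_\lambda\|$ uniformly by $\mathcal{O}(\lambda^{-\epsilon_h})$ — differentiating the quotient produces a numerator term $\nabla h(\theta) - a\cI_d$ of size $\mathcal{O}(|\theta|^l)$ divided by the factor, and a term from differentiating the denominator whose worst case, by the same $\lambda^{\epsilon_h}|\theta|^{l+1} \leq (\cdots)$ comparison, is again $\mathcal{O}(\lambda^{-\epsilon_h})$ — the constant bookkeeping yielding $\cL_0 = 2a+4K_H+(l+1)(2K_h+a)$. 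For the Hessian-difference bound, I would compute $\nabla h_\lambda = a\cI_d + \nabla g_\lambda$ explicitly, note $\nabla h_\lambda - \nabla h_\lambda(\bar\theta)$ reduces to $\nabla g_\lambda(\theta)-\nabla g_\lambda(\bar\theta)$, and expand $\nabla g_\lambda$ as a sum of terms each of the form (polynomial in $\theta$)/(power of taming factor); each such term is itself polynomially Lipschitz with exponent at most $(l+1)(1/\epsilon_h+1)-2$ by combining Assumption~\ref{asm:polylip}'s local Lipschitz bound on $H$ with elementary inequalities $|x^k-y^k|\leq k(|x|+|y|)^{k-1}|x-y|$ and $|(1+x)^{-\epsilon_h}-(1+y)^{-\epsilon_h}| \leq \epsilon_h|x-y|$. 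The main obstacle is item \ref{lem:hlambdaestimates3}: the Hessian-difference estimate is the one genuinely intricate calculation, requiring careful tracking of how the super-linear growth exponent $(l+1)/\epsilon_h$ in the denominator interacts with the numerator growth and with differences, and assembling the many resulting terms into the single stated constant $\cL_{\nabla,\epsilon_h}$. Finally, for item \ref{lem:hlambdaestimates4}, I would write $h(\theta)-h_\lambda(\theta) = (h(\theta)-a\theta)\bigl(1 - (1+\lambda|\theta|^{(l+1)/\epsilon_h})^{-\epsilon_h}\bigr)$, bound the bracket using $1-(1+x)^{-\epsilon_h}\leq \epsilon_h x \wedge 1 \leq \min\{1,\lambda|\theta|^{(l+1)/\epsilon_h}\}$ (hence $\leq \lambda|\theta|^{(l+1)/\epsilon_h}$ always, but one actually wants the cleaner bound $1-(1+x)^{-\epsilon_h}\leq x$ valid since $\epsilon_h\leq 1$, giving a factor $\lambda|\theta|^{(l+1)/\epsilon_h}$), combine with $|h(\theta)-a\theta|\leq (K_h+a)(1+|\theta|^{l+1})$, square, and use $(1+|\theta|^{l+1})^2|\theta|^{2(l+1)/\epsilon_h} \leq 2(1+|\theta|^{2(l+1)(1+1/\epsilon_h)})$ to reach $4\lambda^2(K_h+a)^2(1+|\theta|^{2(l+1)(1+1/\epsilon_h)})$.
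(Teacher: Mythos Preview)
Your proposal is correct and follows essentially the same approach as the paper's proof: the splitting $h_\lambda = a\theta + g_\lambda$, the dissipativity argument for \ref{lem:hlambdaestimates1}, the comparison $(1+\lambda|\theta|^{(l+1)/\epsilon_h})^{\epsilon_h}\gtrsim \lambda^{\epsilon_h}|\theta|^{l+1}$ for \ref{lem:hlambdaestimates2}, the mean-value bound $\|\nabla h_\lambda\|\leq \cL_0\lambda^{-\epsilon_h}$ followed by a term-by-term Lipschitz analysis of the explicit formula for $\nabla h_\lambda$ in \ref{lem:hlambdaestimates3}, and the factorization $h-h_\lambda=(h-a\theta)\bigl(1-(1+\lambda|\theta|^{(l+1)/\epsilon_h})^{-\epsilon_h}\bigr)$ for \ref{lem:hlambdaestimates4} are exactly what the paper does. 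One minor slip: in \ref{lem:hlambdaestimates4} your claimed inequality $(1+|\theta|^{l+1})^2|\theta|^{2(l+1)/\epsilon_h}\leq 2(1+|\theta|^{2(l+1)(1+1/\epsilon_h)})$ fails (take $|\theta|=2$, $\epsilon_h=1/2$, $l=1$), and the correct constant is $4$---this is precisely what is needed to land on the stated bound $4\lambda^2(K_h+a)^2(\cdots)$, so your final expression is right even though the intermediate step needs the larger factor.
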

\begin{proof} See Appendix \ref{lem:hlambdaestimatesproof}.
\end{proof}

Then, we establish moment estimates for $(\overline{\theta}^{\lambda}_t)_{t \geq 0 }$ defined in \eqref{eq:ktulaproc}.
\begin{lemma}\label{lem:2ndpthmmt} Let Assumptions \ref{asm:initialc}, \ref{asm:polylip}, and \ref{asm:dissip} hold.  Then, we obtain the following estimates:
\begin{enumerate} 
\item For any $0<\lambda\leq \lambda_{\max}$, $n \in \N_0$, and $t \in (n, n+1]$,
\[
\E\left[ |\overline{\theta}^{\lambda}_t|^2  \right]  \leq (1-a\lambda (t-n))(1-a\lambda)^n\E\left[|\theta_0|^2\right]+\cc_0(1+1/a),
\]
where $\cc_0\coloneq 2b+8K_h^2+2\beta^{-1}d$. In particular, the above inequality implies $\sup_{t\geq 0}\E\left[|\overline{\theta}^{\lambda}_t|^2\right]  \leq  \E\left[|\theta_0|^2\right] +\cc_0(1+1/a)<\infty$.
\label{lem:2ndpthmmti}
\item  For any $p\in [2, \infty)\cap {\N}$, $0<\lambda\leq \lambda_{\max}$, $n \in \N_0$, and $t \in (n, n+1]$,
\[
\E\left[ |\overline{\theta}^{\lambda}_t|^{2p} \right]  \leq  (1-a\lambda (t-n))  (1-a\lambda)^n\E\left[|\theta_0|^{2p} \right]+ \cc_p(1+1/a),
\]
where $\cc_p$ is given in \eqref{eq:2pthmmtexpconst}. In particular, the above estimate implies $\sup_{t\geq 0}\E\left[|\overline{\theta}^{\lambda}_t|^{2p} \right]  \leq  \E\left[|\theta_0|^{2p}\right]+ \cc_p(1+1/a)<\infty$. \label{lem:2ndpthmmtii}
\end{enumerate}
\end{lemma}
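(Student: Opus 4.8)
The plan is to derive a one-step contraction inequality for the Euclidean norm of the interpolated process and then iterate it. For part \ref{lem:2ndpthmmti}, fix $n\in\N_0$ and $t\in(n,n+1]$. From the interpolation SDE \eqref{eq:ktulaproc}, on $[n,t]$ the drift is frozen at $\overline{\theta}^{\lambda}_n$, so I would write $\overline{\theta}^{\lambda}_t=\overline{\theta}^{\lambda}_n-\lambda(t-n)h_{\lambda}(\overline{\theta}^{\lambda}_n)+\sqrt{2\lambda\beta^{-1}}(B^{\lambda}_t-B^{\lambda}_n)$, expand $|\overline{\theta}^{\lambda}_t|^2$, and take expectations. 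The cross term between $\overline{\theta}^{\lambda}_n$ and the Brownian increment vanishes by independence; the cross term with $-\lambda(t-n)h_\lambda(\overline{\theta}^{\lambda}_n)$ is controlled by the dissipativity property \ref{lem:hlambdaestimates1} of Lemma \ref{lem:hlambdaestimates}, giving $-2\lambda(t-n)(a|\overline{\theta}^{\lambda}_n|^2-b)$; the term $\lambda^2(t-n)^2|h_\lambda(\overline{\theta}^{\lambda}_n)|^2$ is bounded using the linear-growth estimate $|h_\lambda(\theta)|\le 2a|\theta|+2K_h\lambda^{-1/2}$ from \ref{lem:hlambdaestimates2}, which after $(x+y)^2\le 2x^2+2y^2$ yields a term of order $\lambda^2|\overline{\theta}^{\lambda}_n|^2$ plus a term of order $\lambda$ (the $\lambda^{-1/2}$ is squared and multiplied by $\lambda^2$); and the Brownian term contributes $2\lambda(t-n)\beta^{-1}d$. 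Collecting, and using $\lambda\le\lambda_{\max}\le 1/(8a)$ so that $8a^2\lambda^2\le a\lambda$, I expect to reach $\E[|\overline{\theta}^{\lambda}_t|^2]\le(1-a\lambda(t-n))\E[|\overline{\theta}^{\lambda}_n|^2]+\cc_0\lambda(t-n)$ with $\cc_0=2b+8K_h^2+2\beta^{-1}d$. Setting $t=n+1$ gives $\E[|\overline{\theta}^{\lambda}_{n+1}|^2]\le(1-a\lambda)\E[|\overline{\theta}^{\lambda}_n|^2]+\cc_0\lambda$; iterating this and summing the geometric series bounds $\E[|\overline{\theta}^{\lambda}_n|^2]\le(1-a\lambda)^n\E[|\theta_0|^2]+\cc_0/a$, and then plugging back into the one-step bound for general $t\in(n,n+1]$ produces the stated estimate, with the supremum bound following since $1-a\lambda(t-n)\le 1$.

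For part \ref{lem:2ndpthmmtii}, I would proceed analogously but raise to the $p$-th power: starting from $|\overline{\theta}^{\lambda}_t|^{2}=|\overline{\theta}^{\lambda}_n|^2+\Delta$ with $\Delta$ collecting the drift and noise contributions, apply the binomial/Young expansion of $(|\overline{\theta}^{\lambda}_n|^2+\Delta)^p$ and take conditional expectations given $\mathcal{F}^{\lambda}_n$. The leading term is $|\overline{\theta}^{\lambda}_n|^{2p}$; the next-order term is $p|\overline{\theta}^{\lambda}_n|^{2(p-1)}\E[\Delta\mid\mathcal{F}^{\lambda}_n]$, where $\E[\Delta\mid\mathcal{F}^{\lambda}_n]\le -2a\lambda(t-n)|\overline{\theta}^{\lambda}_n|^2+(\text{lower order in }|\overline{\theta}^{\lambda}_n|)$ by dissipativity again; higher binomial terms carry extra powers of $\lambda$ and moments of the Gaussian increment (finite, dimension-dependent constants) or of $h_\lambda(\overline{\theta}^{\lambda}_n)$, which by linear growth are controlled by $|\overline{\theta}^{\lambda}_n|$ up to the first power plus $\lambda^{-1/2}$ constants. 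The dominant negative contribution $-2ap\lambda(t-n)|\overline{\theta}^{\lambda}_n|^{2p}$ is then used, via Young's inequality, to absorb all the $|\overline{\theta}^{\lambda}_n|^{k}$ terms with $k<2p$ at the cost of enlarging the additive constant to some $\cc_p$ (whose explicit form references \eqref{eq:2pthmmtexpconst}) and shrinking the contraction factor; the stepsize restriction $\lambda\le\lambda_{\max}$ ensures the coefficient of $|\overline{\theta}^{\lambda}_n|^{2p}$ stays bounded by $1-a\lambda(t-n)$. Iterating over $n$ and summing the geometric series, exactly as in part \ref{lem:2ndpthmmti}, delivers the claim and its supremum corollary.

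The main obstacle I anticipate is the bookkeeping in part \ref{lem:2ndpthmmtii}: correctly tracking all the cross terms from expanding $(|\overline{\theta}^{\lambda}_n|^2+\Delta)^p$, in particular verifying that every term involving a power of $|\overline{\theta}^{\lambda}_n|$ strictly below $2p$ can be dominated by $\epsilon\lambda|\overline{\theta}^{\lambda}_n|^{2p}$ plus a $\lambda$-order constant via Young's inequality, with $\epsilon$ small enough (depending only on $a,p$) that the net coefficient of $|\overline{\theta}^{\lambda}_n|^{2p}$ remains $1-a\lambda(t-n)$ under $\lambda\le\lambda_{\max}$. The structurally important inputs are the dissipativity bound \ref{lem:hlambdaestimates1} (which supplies the negative quadratic drift that drives the contraction) and the linear growth bound \ref{lem:hlambdaestimates2} (which keeps the $\lambda^2|h_\lambda|^2$-type terms from reintroducing super-linear powers of $|\overline{\theta}^{\lambda}_n|$); the rest is careful but routine estimation, and the iteration/geometric-sum step is entirely standard.
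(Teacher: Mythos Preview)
Your approach to part \ref{lem:2ndpthmmti} is essentially identical to the paper's: the same decomposition, the same use of Lemma~\ref{lem:hlambdaestimates}\ref{lem:hlambdaestimates1}--\ref{lem:hlambdaestimates2}, the same use of $\lambda\le 1/(8a)$, and the same iteration.

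For part \ref{lem:2ndpthmmtii} your plan is correct in spirit but the paper takes a somewhat different and cleaner technical route. Rather than writing $|\overline{\theta}^{\lambda}_t|^2=|\overline{\theta}^{\lambda}_n|^2+\Delta$ and expanding $(|\overline{\theta}^{\lambda}_n|^2+\Delta)^p$ binomially, the paper first splits $\overline{\theta}^{\lambda}_t=\Delta_{n,t}^{\lambda}+\Xi_{n,t}^{\lambda}$ with $\Delta_{n,t}^{\lambda}$ deterministic given $\overline{\theta}^{\lambda}_n$ and $\Xi_{n,t}^{\lambda}$ Gaussian and independent, and invokes a Gaussian-moment inequality (borrowed from \cite[Lemma 4.2-(ii)]{lim2021nonasymptotic}) to obtain $\E[|\overline{\theta}^{\lambda}_t|^{2p}\mid\overline{\theta}^{\lambda}_n]\le |\Delta_{n,t}^{\lambda}|^{2p}+C\lambda|\Delta_{n,t}^{\lambda}|^{2p-2}+C\lambda^p$. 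It then bounds $|\Delta_{n,t}^{\lambda}|^{2p}=(|\Delta_{n,t}^{\lambda}|^2)^p$ directly using the already-established contraction $|\Delta_{n,t}^{\lambda}|^2\le(1-a\lambda(t-n))|\overline{\theta}^{\lambda}_n|^2+c\lambda$ from part \ref{lem:2ndpthmmti} together with the elementary inequality $(\rho+\iota)^p\le(1+\varepsilon)^{p-1}\rho^p+(1+\varepsilon^{-1})^{p-1}\iota^p$ with $\varepsilon=a\lambda(t-n)/2$; this immediately yields a factor $(1-a\lambda(t-n)/2)^{p-1}(1-a\lambda(t-n))$ in front of $|\overline{\theta}^{\lambda}_n|^{2p}$ without any further balancing. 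Finally, instead of Young's inequality, the paper absorbs the remaining $|\overline{\theta}^{\lambda}_n|^{2p-2}$ term by a threshold (set-splitting) argument on $\{|\overline{\theta}^{\lambda}_n|>\cM(p)\}$ versus its complement. The payoff of the paper's organization is exactly the bookkeeping concern you flagged: because the contraction is inherited from $|\Delta_{n,t}^{\lambda}|^2$ before taking the $p$-th power, no $p$-dependent stepsize restriction creeps in, and only one lower-order term ($|\overline{\theta}^{\lambda}_n|^{2p-2}$) needs to be absorbed, as opposed to the full binomial cascade your route would generate.
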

\begin{proof} See Appendix \ref{lem:2ndpthmmtproof}.
\end{proof}

Next, we provide a uniform upper bound for the density of $\mathcal{L}(\overline{\theta}^{\lambda}_t)$, $t\in(n,n+1], n \in \N_0$.
\begin{lemma}{\cite[Proposition 2]{polyanskiy2016wasserstein}}\label{lem:ktuladensitygrowth} Let Assumptions \ref{asm:initialc}, \ref{asm:polylip}, and \ref{asm:dissip} hold.  Then, we have, for any $\theta\in\R^d$, $0<\lambda\leq \lambda_{\max}$, $n \in \N_0$, that
\[
|\nabla \log \pi^{\lambda}_n(\theta)| \leq \lambda^{-1}(\widetilde{\cc}_1|\theta|+\widetilde{\cc}_2),
\]
where $\widetilde{\cc}_1 \coloneqq 2\beta  $ and $\widetilde{\cc}_2 \coloneqq 2\beta  ( \E\left[|\theta_0|^2\right] +\cc_0(1+1/a))^{1/2}$ with $\cc_0$ given in Lemma \ref{lem:2ndpthmmt}-\ref{lem:2ndpthmmti}.
\end{lemma}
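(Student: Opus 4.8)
The plan is to exploit the fact that, for $n\ge 1$, the kTULA iterate is an independent Gaussian perturbation. Rewriting the recursion \eqref{eq:ktula} as $\theta^{\lambda}_n = W_{n-1} + \sqrt{2\lambda\beta^{-1}}\,\xi_n$ with $W_{n-1}\coloneqq \theta^{\lambda}_{n-1}-\lambda h_{\lambda}(\theta^{\lambda}_{n-1})$, the variable $W_{n-1}$ is a function of $(\theta_0,\xi_1,\dots,\xi_{n-1})$ and hence independent of $\xi_n$; thus $\pi^{\lambda}_n$ is the density of the Gaussian-smoothed measure $\mathcal{L}(W_{n-1})\ast\mathcal{N}(0,2\lambda\beta^{-1}\cI_d)$. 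I would then invoke \cite[Proposition 2]{polyanskiy2016wasserstein} essentially verbatim: via Tweedie's identity $\nabla\log\pi^{\lambda}_n(\theta)=\tfrac{\beta}{2\lambda}\bigl(\E[W_{n-1}\mid\theta^{\lambda}_n=\theta]-\theta\bigr)$, that proposition delivers a pointwise score bound of the form $|\nabla\log\pi^{\lambda}_n(\theta)|\le c\,\beta\lambda^{-1}\bigl(|\theta|+(\E|W_{n-1}|^2)^{1/2}\bigr)$ with $c$ a universal (in particular dimension-free) constant.

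The remaining task is to control $\E|W_{n-1}|^2$ uniformly in $n$ and $\lambda$, and here the cleanest route is the following: since $W_{n-1}$ is independent of $\xi_n$, one has $\E|\theta^{\lambda}_n|^2=\E|W_{n-1}|^2+2\lambda\beta^{-1}d$, whence $\E|W_{n-1}|^2\le\E|\theta^{\lambda}_n|^2$; combining this with the uniform second-moment bound of Lemma \ref{lem:2ndpthmmt}-\ref{lem:2ndpthmmti} (and $\mathcal{L}(\overline{\theta}^{\lambda}_n)=\mathcal{L}(\theta^{\lambda}_n)$) gives $\E|W_{n-1}|^2\le\E[|\theta_0|^2]+\cc_0(1+1/a)$ for every $0<\lambda\le\lambda_{\max}$. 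A direct alternative is to expand $|\theta-\lambda h_{\lambda}(\theta)|^2=|\theta|^2-2\lambda\langle h_{\lambda}(\theta),\theta\rangle+\lambda^2|h_{\lambda}(\theta)|^2$ and apply the dissipativity and linear-growth estimates of Lemma \ref{lem:hlambdaestimates} together with $\lambda\le\lambda_{\max}\le 1/(8a)$, which reproduces the same bound. Substituting this into the score bound and carefully tracking the constant $c$ yields the claimed inequality with $\widetilde{\cc}_1=2\beta$ and $\widetilde{\cc}_2=2\beta(\E[|\theta_0|^2]+\cc_0(1+1/a))^{1/2}$. For $n=0$, where the iterate is not a Gaussian convolution, the bound is instead read off directly from Assumption \ref{asm:initialc} on the law of $\theta_0$ (for the initializations considered in Section \ref{sec:app}, such as a Gaussian, $|\nabla\log\pi^{\lambda}_0|$ is indeed of the stated linear form).

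Given Lemma \ref{lem:2ndpthmmt}, the argument is essentially routine, and I do not expect a genuine obstacle in this lemma: the only points requiring mild care are the bookkeeping of the universal constant in the Polyanskiy--Wu score bound and the separate handling of $n=0$. The substantive work underlying this estimate has in fact already been carried out upstream, namely the uniform-in-time second-moment control of Lemma \ref{lem:2ndpthmmt}, which in turn rests on the dissipativity and linear growth of the tamed drift $h_{\lambda}$ established in Lemma \ref{lem:hlambdaestimates}.
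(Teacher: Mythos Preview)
Your proposal is correct and follows essentially the same route as the paper's proof: handle $n=0$ via Assumption~\ref{asm:initialc}, and for $n\ge 1$ write the iterate as an independent Gaussian perturbation of the previous drifted state, invoke \cite[Proposition~2]{polyanskiy2016wasserstein}, and plug in the uniform second-moment bound from Lemma~\ref{lem:2ndpthmmt}-\ref{lem:2ndpthmmti}. The only cosmetic difference is that the paper substitutes the second moment of the \emph{output} $\overline{\theta}^{\lambda}_{n+1}$ directly into the Polyanskiy--Wu bound, whereas you first bound $\E|W_{n-1}|^2\le\E|\theta^{\lambda}_n|^2$ and then apply the same lemma; the resulting constants are identical.
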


\begin{proof} See Appendix \ref{lem:ktuladensitygrowthproof}.
\end{proof}

\subsection{Proof of main results}
To establish the results in Theorem \ref{thm:mainthm} and Corollary \ref{crl}, we follow the proof of \cite[Theorem 7.8]{lytras2025taming} to obtain, for any $0<\lambda\leq \lambda_{\max}$, $n\in\N_0$, $t\in(n ,n+1 ]$, that
\begin{equation}\label{eq:changeofKLint}
\frac{\rmd}{\rmd t}\rmKL(\pi^{\lambda}_t\|\pi_{\beta}) = -(3/4)\lambda J(\pi^{\lambda}_t\|\pi_{\beta})+4\lambda\beta\int_{\R^d}\pi^{\lambda}_t(\theta)\left|\E\left[\left. h_{\lambda}(\overline{\theta}^{\lambda}_n) - h(\overline{\theta}^{\lambda}_t)\right|\overline{\theta}^{\lambda}_t=\theta\right]\right|^2\,\rmd\theta,
\end{equation}
where $J(\pi^{\lambda}_t\|\pi_{\beta})\coloneqq (1/\beta)\int_{\R^d}|\nabla \log \pi^{\lambda}_t - \nabla \log \pi_{\beta}|^2\,\rmd \pi_{\beta}$. To upper bound the second term on the RHS of \eqref{eq:changeofKLint}, we consider the decomposition adopted in \cite{mou2022improved}. More precisely, we have, for any $\theta\in\R^d$, that
\begin{equation}\label{eq:decomp}
\E\left[\left. h_{\lambda}(\overline{\theta}^{\lambda}_n) - h(\overline{\theta}^{\lambda}_t)\right|\overline{\theta}^{\lambda}_t=\theta\right] 
=\nabla h_{\lambda}(\theta)(F_1(\theta)-F_2(\theta)-F_3(\theta) )+r_t(\theta)+\overline{r}_t(\theta),
\end{equation}
where
\begin{align}\label{eq:decompterms}
\begin{split}
F_1(\theta)			&\coloneqq \E\left[\left.  \left(\cI_d+\lambda(t-n ) (-\nabla h_{\lambda}(\overline{\theta}^{\lambda}_n))\right)\left(\overline{\theta}^{\lambda}_n-\overline{\theta}^{\lambda}_t+\lambda(t-n )(-  h_{\lambda}(\overline{\theta}^{\lambda}_n))\right) \right|\overline{\theta}^{\lambda}_t=\theta\right],\\
F_2(\theta)			&\coloneqq \E\left[\left.  \lambda(t-n ) (-\nabla h_{\lambda}(\overline{\theta}^{\lambda}_n))\left(\overline{\theta}^{\lambda}_n-\overline{\theta}^{\lambda}_t+\lambda(t-n )(-  h_{\lambda}(\overline{\theta}^{\lambda}_n))\right) \right|\overline{\theta}^{\lambda}_t=\theta\right],\\
F_3(\theta)			&\coloneqq \E\left[\left.  \lambda(t-n )(-  h_{\lambda}(\overline{\theta}^{\lambda}_n)) \right|\overline{\theta}^{\lambda}_t=\theta\right],\\
r_t(\theta)			&\coloneqq \E\left[\left. h_{\lambda}(\overline{\theta}^{\lambda}_n) - h_{\lambda}(\overline{\theta}^{\lambda}_t)\right|\overline{\theta}^{\lambda}_t=\theta\right] - \nabla h_{\lambda}(\theta) \E\left[\left. \overline{\theta}^{\lambda}_n-\overline{\theta}^{\lambda}_t\right|\overline{\theta}^{\lambda}_t=\theta\right] ,\\
\overline{r}_t(\theta)	&\coloneqq \E\left[\left. h_{\lambda}(\overline{\theta}^{\lambda}_t) - h(\overline{\theta}^{\lambda}_t)\right|\overline{\theta}^{\lambda}_t=\theta\right] .
\end{split}
\end{align}
We then establish upper bound for each of the terms in \eqref{eq:decompterms}.

We define, for any $n\in\N_0$, 
\begin{equation}\label{eq:fisherinfo}
J_n \equiv J(\pi^{\lambda}_n)\coloneqq \int_{\R^d} \pi^{\lambda}_n(\theta) |\nabla \log \pi^{\lambda}_n(\theta)|^2\,\rmd \theta.
\end{equation}
The upper bound associated with the first term in \eqref{eq:decompterms} is provided in the following lemma.
\begin{lemma}\label{lem:decomptermsF1} Let Assumptions \ref{asm:initialc}, \ref{asm:polylip}, and \ref{asm:dissip} hold. Let $\epsilon_h \in (0,1/2]$ and $\epsilon>0$. Then, we have, for any $0<\lambda\leq \lambda_{\max}$, $n\in\N_0$, $t\in(n ,n+1 ]$, that
\[
\E \left[\left| \nabla h_{\lambda}(\overline{\theta}^{\lambda}_t) F_1(\overline{\theta}^{\lambda}_t)\right|^2\right] \leq \cC_{D,\epsilon,\epsilon_h}\lambda^{2-\epsilon} J_n^{1-\epsilon/2},
\]
where $\cC_{D,\epsilon,\epsilon_h} >0$ is given in \eqref{eq:cDepsilonexp}.
\end{lemma}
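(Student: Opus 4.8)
The idea is to first bring $F_1$ into a form to which Gaussian integration by parts applies. Write $s\coloneqq t-n\in(0,1]$ and $\Phi_\lambda(y)\coloneqq y-\lambda s\,h_\lambda(y)$, so that \eqref{eq:ktulaproc} reads $\overline{\theta}^{\lambda}_t=\Phi_\lambda(\overline{\theta}^{\lambda}_n)+W$ with $W\coloneqq\sqrt{2\lambda\beta^{-1}}(B^{\lambda}_t-B^{\lambda}_n)\sim N(0,2\lambda\beta^{-1}s\,\cI_d)$ independent of $\overline{\theta}^{\lambda}_n$. Since $\cI_d+\lambda s(-\nabla h_\lambda(\overline{\theta}^{\lambda}_n))=\nabla\Phi_\lambda(\overline{\theta}^{\lambda}_n)$ and $\overline{\theta}^{\lambda}_n-\overline{\theta}^{\lambda}_t+\lambda s(-h_\lambda(\overline{\theta}^{\lambda}_n))=-W$, the definition of $F_1$ in \eqref{eq:decompterms} collapses to $F_1(\theta)=-\E[\nabla\Phi_\lambda(\overline{\theta}^{\lambda}_n)W\mid\overline{\theta}^{\lambda}_t=\theta]$.

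Next I would integrate by parts. The conditional density of $\overline{\theta}^{\lambda}_n$ given $\overline{\theta}^{\lambda}_t=\theta$ is proportional to $\pi^{\lambda}_n(y)\varphi_\lambda(\theta-\Phi_\lambda(y))$ with $\varphi_\lambda$ the density of $W$, and $\nabla_y\varphi_\lambda(\theta-\Phi_\lambda(y))=\tfrac{\beta}{2\lambda s}\nabla\Phi_\lambda(y)^{\cT}(\theta-\Phi_\lambda(y))\varphi_\lambda(\theta-\Phi_\lambda(y))$; integrating by parts in $y$ (boundary terms vanishing by the exponential decay of $\pi^{\lambda}_n$ propagated from Assumption \ref{asm:initialc} as in \cite{lytras2025taming}, together with Lemma \ref{lem:ktuladensitygrowth}) and splitting $\nabla\Phi_\lambda=\nabla\Phi_\lambda^{\cT}+(\nabla\Phi_\lambda-\nabla\Phi_\lambda^{\cT})$ with $\nabla\Phi_\lambda-\nabla\Phi_\lambda^{\cT}=-\lambda s(\nabla h_\lambda-\nabla h_\lambda^{\cT})$ yields
\[
F_1(\theta)=\tfrac{2\lambda s}{\beta}\,\E\!\left[\nabla\log\pi^{\lambda}_n(\overline{\theta}^{\lambda}_n)\mid\overline{\theta}^{\lambda}_t=\theta\right]+\lambda s\,\E\!\left[\big(\nabla h_\lambda(\overline{\theta}^{\lambda}_n)-\nabla h_\lambda(\overline{\theta}^{\lambda}_n)^{\cT}\big)W\mid\overline{\theta}^{\lambda}_t=\theta\right]\eqqcolon F_1^{(1)}(\theta)+F_1^{(2)}(\theta).
\]
The crucial structural observation is that $\nabla h_\lambda$ is nearly symmetric: its antisymmetric part comes only from the rank-one term produced when differentiating the taming factor $(1+\lambda|\theta|^{(l+1)/\epsilon_h})^{\epsilon_h}$ in \eqref{eq:ktulah}, and this term carries an explicit factor $\lambda$, so that $\|\nabla h_\lambda(y)-\nabla h_\lambda(y)^{\cT}\|\le C\lambda(1+|y|^{p})$ for a $\lambda$-independent constant $C$ and some $p>0$.

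For $F_1^{(1)}$: using $|\nabla h_\lambda(\overline{\theta}^{\lambda}_t)F_1^{(1)}(\overline{\theta}^{\lambda}_t)|^2\le\|\nabla h_\lambda(\overline{\theta}^{\lambda}_t)\|^2|F_1^{(1)}(\overline{\theta}^{\lambda}_t)|^2$, conditional Jensen and the tower property, one arrives at $\tfrac{4\lambda^2 s^2}{\beta^2}\E[\|\nabla h_\lambda(\overline{\theta}^{\lambda}_t)\|^2|\nabla\log\pi^{\lambda}_n(\overline{\theta}^{\lambda}_n)|^2]$. Here one cannot afford the crude bound $\|\nabla h_\lambda\|\le\cL_0\lambda^{-\epsilon_h}$, which would only produce $\lambda^{2-2\epsilon_h}J_n$; instead one retains the $\lambda$-independent polynomial growth $\|\nabla h_\lambda(y)\|\le C(1+|y|^{(l+1)(1/\epsilon_h+1)-1})$ (Lemma \ref{lem:hlambdaestimates}-\ref{lem:hlambdaestimates3} with second argument $0$), writes $|\nabla\log\pi^{\lambda}_n|^2=(|\nabla\log\pi^{\lambda}_n|^2)^{1-\epsilon/2}|\nabla\log\pi^{\lambda}_n|^{\epsilon}$, bounds the last factor by $\lambda^{-\epsilon}(\widetilde{\cc}_1|\cdot|+\widetilde{\cc}_2)^{\epsilon}$ via Lemma \ref{lem:ktuladensitygrowth}, and applies Hölder's inequality with conjugate exponents $1/(1-\epsilon/2)$ and $2/\epsilon$ (the Young-type step where the parameter $\epsilon$ enters); the moment estimates of Lemma \ref{lem:2ndpthmmt} render the remaining polynomial-moment factor finite and uniform in $n,t,\lambda$, leaving $C\lambda^{2-\epsilon}J_n^{1-\epsilon/2}$. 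For $F_1^{(2)}$: the bound on $\|\nabla h_\lambda-\nabla h_\lambda^{\cT}\|$, the global Lipschitz estimate $\|\nabla h_\lambda\|\le\cL_0\lambda^{-\epsilon_h}$, the independence of $W$ and $\overline{\theta}^{\lambda}_n$ with $\E|W|^2=2\lambda\beta^{-1}sd$, and Lemma \ref{lem:2ndpthmmt} give $\E[|\nabla h_\lambda(\overline{\theta}^{\lambda}_t)F_1^{(2)}(\overline{\theta}^{\lambda}_t)|^2]\le C\lambda^{4}$, which is absorbed into the claimed bound upon noting the uniform lower bound $J_n\ge d^2/(\E|\theta_0|^2+\cc_0(1+1/a))>0$ (multivariate Cramér--Rao inequality together with Lemma \ref{lem:2ndpthmmt}-\ref{lem:2ndpthmmti}). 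The main obstacle is the $F_1^{(1)}$ estimate: exposing the Fisher information $J_n$ at all requires the Gaussian integration by parts, and then recovering the sharp power of $\lambda$ — rather than the weak $\lambda^{2-2\epsilon_h}$ — hinges on trading the $\lambda$-free polynomial growth of $\nabla h_\lambda$ against the $\lambda^{-1}$-scaled pointwise score bound of Lemma \ref{lem:ktuladensitygrowth}, at the price of the arbitrarily small losses $\lambda^{-\epsilon}$ and $J_n\mapsto J_n^{1-\epsilon/2}$.
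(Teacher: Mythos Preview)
Your argument is correct and follows the same core route as the paper: Gaussian integration by parts to express $F_1$ through the score $\nabla\log\pi^{\lambda}_n$, then the interpolation $|\nabla\log\pi^{\lambda}_n|^2=|\nabla\log\pi^{\lambda}_n|^{2-\epsilon}\cdot|\nabla\log\pi^{\lambda}_n|^{\epsilon}$ combined with Lemma~\ref{lem:ktuladensitygrowth} and H\"older with exponents $(1-\epsilon/2)^{-1}$, $2/\epsilon$, keeping the $\lambda$-free polynomial growth bound on $\|\nabla h_\lambda\|$ so that all moments are controlled by Lemma~\ref{lem:2ndpthmmt}. The paper arrives at the clean identity $F_1(\theta)=2\beta^{-1}\lambda(t-n)\E[\nabla\log\pi^{\lambda}_n(\overline{\theta}^{\lambda}_n)\mid\overline{\theta}^{\lambda}_t=\theta]$ directly from the chain rule, which tacitly treats $\nabla h_\lambda$ as symmetric; you notice that $\nabla h_\lambda$ is not a Hessian and therefore split off the antisymmetric part, producing the extra term $F_1^{(2)}$. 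Your observation that the antisymmetric part carries an explicit factor~$\lambda$ (it is the rank-one contribution $-\lambda(l+1)|\theta|^{(l+1)/\epsilon_h-2}(h(\theta)\theta^{\cT}-\theta h(\theta)^{\cT})/(1+\lambda|\theta|^{(l+1)/\epsilon_h})^{1+\epsilon_h}$ in \eqref{eq:nablahlambda}) is exactly what makes $F_1^{(2)}$ a higher-order remainder, and the Cram\'er--Rao lower bound on $J_n$ lets you absorb it into the stated estimate. So your proof is the paper's proof with this gap filled; the constants differ but the structure and the decisive H\"older step are identical.
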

\begin{proof} See Appendix \ref{lem:decomptermsF1proof}.
\end{proof}

The RHS of the inequality in Lemma \ref{lem:decomptermsF1} depends on the term $J_n$, which can be further bounded using the result presented below. Consequently, we provide  a full estimate for the first term in \eqref{eq:decompterms}.
\begin{lemma}\label{lem:ubforfisherinfo} Let Assumptions \ref{asm:initialc}, \ref{asm:polylip}, and \ref{asm:dissip} hold. Let $\epsilon_h \in (0,1/2]$. Then, we have, for any $0<\lambda\leq \lambda_{\max}$, $n\in\N_0$, that
\[
J_n \leq \cC_{J,\epsilon_h}	 \lambda^{-\epsilon_h},
\]
where $\cC_{J,\epsilon_h}>0$ is given in \eqref{eq:cJexp}.
\end{lemma}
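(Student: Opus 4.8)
The plan is to prove the bound by induction on $n$, via a one-step recursion for the Fisher information obtained by writing one iteration of kTULA \eqref{eq:ktula}--\eqref{eq:ktulah} as a deterministic drift followed by Gaussian smoothing. Set $T_{\lambda}(\theta)\coloneqq\theta-\lambda h_{\lambda}(\theta)$ and $W_n\coloneqq T_{\lambda}(\theta^{\lambda}_n)$, so that $\theta^{\lambda}_{n+1}=W_n+\sqrt{2\lambda\beta^{-1}}\,\xi_{n+1}$ with $\xi_{n+1}$ a standard Gaussian vector independent of $W_n$ (by Assumption \ref{asm:initialc}). Since $h_{\lambda}$ is $C^1$, Lemma \ref{lem:hlambdaestimates}-\ref{lem:hlambdaestimates3} gives $\|\nabla h_{\lambda}(\theta)\|\le\cL_0\lambda^{-\epsilon_h}$ for all $\theta$, hence $\|\lambda\nabla h_{\lambda}(\theta)\|\le\cL_0\lambda^{1-\epsilon_h}\le1/6$ for $0<\lambda\le\lambda_{\max}$; consequently $\nabla T_{\lambda}=\cI_d-\lambda\nabla h_{\lambda}$ is everywhere invertible, $T_{\lambda}$ is injective and proper, and thus a $C^2$-diffeomorphism of $\R^d$ with $\sup_{\theta}\|(\nabla T_{\lambda}(\theta))^{-1}\|\le(1-\cL_0\lambda^{1-\epsilon_h})^{-1}$.

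First I would control the effect of the drift map on the Fisher information. By the change-of-variables formula for the density of $W_n=T_{\lambda}(\theta^{\lambda}_n)$, the score of $W_n$ evaluated at $T_{\lambda}(\theta)$ equals $\big((\nabla T_{\lambda}(\theta))^{\top}\big)^{-1}\big(\nabla\log\pi^{\lambda}_n(\theta)-\nabla\phi_{\lambda}(\theta)\big)$, where $\phi_{\lambda}(\theta)\coloneqq\log\det(\cI_d-\lambda\nabla h_{\lambda}(\theta))$ (which is positive for $\lambda\le\lambda_{\max}$). Changing variables back and using the bound on $\|(\cI_d-\lambda\nabla h_{\lambda})^{-1}\|$,
\[
J(W_n)\le(1-\cL_0\lambda^{1-\epsilon_h})^{-2}\int_{\R^d}\pi^{\lambda}_n(\theta)\,\big|\nabla\log\pi^{\lambda}_n(\theta)-\nabla\phi_{\lambda}(\theta)\big|^2\,\rmd\theta.
\]
Splitting the integrand by Young's inequality $|a-b|^2\le(1+\eta)|a|^2+(1+\eta^{-1})|b|^2$ with $\eta\coloneqq\lambda^{1-\epsilon_h}$ isolates $J_n$ with a multiplicative factor $(1-\cL_0\lambda^{1-\epsilon_h})^{-2}(1+\lambda^{1-\epsilon_h})=1+\mathcal{O}(\lambda^{1-\epsilon_h})$ and leaves the remainder term $\mathcal{O}(\lambda^{-(1-\epsilon_h)})\int\pi^{\lambda}_n|\nabla\phi_{\lambda}|^2$. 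Since $(\nabla\phi_{\lambda}(\theta))_k=-\lambda\,\mathrm{tr}\big[(\cI_d-\lambda\nabla h_{\lambda}(\theta))^{-1}\partial_{\theta_k}\nabla h_{\lambda}(\theta)\big]$, the second bound in Lemma \ref{lem:hlambdaestimates}-\ref{lem:hlambdaestimates3} yields $|\nabla\phi_{\lambda}(\theta)|\le\lambda\cdot(\text{polynomial in }|\theta|)$, so $\int\pi^{\lambda}_n|\nabla\phi_{\lambda}|^2\le M_{\mathrm{poly}}\lambda^2$ with $M_{\mathrm{poly}}$ a finite constant independent of $\lambda$ and $n$, thanks to the uniform-in-time moment bounds of Lemma \ref{lem:2ndpthmmt}-\ref{lem:2ndpthmmtii}. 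Altogether this gives $J(W_n)\le\kappa J_n+R$ with $\kappa=1+\mathcal{O}(\lambda^{1-\epsilon_h})\ge1$ and $R=\mathcal{O}(\lambda^{1+\epsilon_h})$, all constants explicit.

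Second, I would combine this with the convolution (Stam) inequality for the Fisher information: as $\theta^{\lambda}_{n+1}=W_n+\sqrt{2\lambda\beta^{-1}}\,\xi_{n+1}$ with $\xi_{n+1}$ independent of $W_n$, we have $J_{n+1}^{-1}\ge J(W_n)^{-1}+2\lambda(\beta d)^{-1}$, hence
\[
J_{n+1}\le\frac{\kappa J_n+R}{1+\tfrac{2\lambda}{\beta d}(\kappa J_n+R)}.
\]
It then suffices to check that, with $J^{*}\coloneqq\cC_{J,\epsilon_h}\lambda^{-\epsilon_h}$, the increasing map $s\mapsto(\kappa s+R)/(1+\tfrac{2\lambda}{\beta d}(\kappa s+R))$ maps $[0,J^{*}]$ into itself; dropping the nonnegative term $\tfrac{2\lambda}{\beta d}RJ^{*}$, this reduces to $(\kappa-1)J^{*}+R\le\tfrac{2\lambda\kappa}{\beta d}(J^{*})^2$, and substituting $\kappa-1=\mathcal{O}(\lambda^{1-\epsilon_h})$, $R=\mathcal{O}(\lambda^{1+\epsilon_h})$, dividing by $\lambda^{1-2\epsilon_h}$ and using $\kappa\ge1$, $\lambda\le1$, this becomes a quadratic condition on $\cC_{J,\epsilon_h}$ of the form $\tfrac{2}{\beta d}\cC_{J,\epsilon_h}^2\ge(\text{const})\,\cC_{J,\epsilon_h}+(\text{const})$, satisfied once $\cC_{J,\epsilon_h}$ is large enough. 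Enlarging $\cC_{J,\epsilon_h}$ further so that $\cC_{J,\epsilon_h}\ge J_0$ (finite by Assumption \ref{asm:initialc}, and independent of $\lambda$ since $\pi^{\lambda}_0=\mathcal{L}(\theta_0)$) settles the base case $J_0\le\cC_{J,\epsilon_h}\le\cC_{J,\epsilon_h}\lambda^{-\epsilon_h}$, and the induction closes; the resulting explicit value of $\cC_{J,\epsilon_h}$ is the one recorded in \eqref{eq:cJexp}.

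I expect the delicate point to be the drift step, namely arranging that $J_n$ enters $J(W_n)$ with a factor $1+\mathcal{O}(\lambda^{1-\epsilon_h})$ rather than a genuine constant larger than $1$ (otherwise iterating would give exponential-in-$n$ growth), which is what forces the Young split with the finely tuned parameter $\eta=\lambda^{1-\epsilon_h}$ and, correspondingly, demands that the log-determinant (Jacobian) correction $\nabla\phi_{\lambda}$ be only an $\mathcal{O}(\lambda^2)$ remainder; this is exactly where the improved control of the second derivative of $h_{\lambda}$ (equivalently, the third derivative of the tamed potential) provided by Lemma \ref{lem:hlambdaestimates}-\ref{lem:hlambdaestimates3} and the uniform-in-time moment bounds of Lemma \ref{lem:2ndpthmmt} are used. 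A secondary technical point is the finiteness of $J(W_n)$, required to invoke Stam's inequality: for $n\ge1$, $\pi^{\lambda}_n$ is a Gaussian convolution, hence smooth with rapidly decaying tails and finite Fisher information, while $J_0<\infty$ by Assumption \ref{asm:initialc}, and $T_{\lambda}$ is a $C^2$-diffeomorphism whose inverse Jacobian grows at most polynomially.
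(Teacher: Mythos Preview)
Your proposal is correct and follows essentially the same route as the paper: write one step of kTULA as the $C^2$-diffeomorphism $T_{\lambda}$ followed by Gaussian convolution, control $J(T_{\lambda}(\theta^{\lambda}_n))$ via the change-of-variables score identity with a Young split of parameter $\mathcal{O}(\lambda^{1-\epsilon_h})$, bound the log-Jacobian gradient using Lemma~\ref{lem:hlambdaestimates}-\ref{lem:hlambdaestimates3} and the moment bounds of Lemma~\ref{lem:2ndpthmmt}, and then combine with Stam's inequality. The only cosmetic difference is the closing of the recursion: the paper rewrites the one-step relation as a $\min$-type inequality for $1/J_n$ and quotes \cite[Lemma~7]{mou2022improved}, whereas you verify directly that $s\mapsto(\kappa s+R)/(1+\tfrac{2\lambda}{\beta d}(\kappa s+R))$ maps $[0,\cC_{J,\epsilon_h}\lambda^{-\epsilon_h}]$ into itself---both arguments yield the same constant structure recorded in \eqref{eq:cJexp}.
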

\begin{proof} See Appendix \ref{lem:ubforfisherinfoproof}.
\end{proof}

Then, we continue to establish an upper bound associated with the second and third terms in \eqref{eq:decompterms}.
\begin{lemma}\label{lem:decomptermsF2n3} Let Assumptions \ref{asm:initialc}, \ref{asm:polylip}, and \ref{asm:dissip} hold. Let $\epsilon_h \in (0,1/2]$. Then, we have, for any $0<\lambda\leq \lambda_{\max}$, $n\in\N_0$, $t\in(n ,n+1 ]$, that
\[
\E \left[\left| \nabla h_{\lambda}(\overline{\theta}^{\lambda}_t) F_2(\overline{\theta}^{\lambda}_t)\right|^2\right] + \E \left[\left| \nabla h_{\lambda}(\overline{\theta}^{\lambda}_t) F_3(\overline{\theta}^{\lambda}_t)\right|^2\right]\leq \cC_{D,\epsilon_h} \lambda^{2},
\]
where $\cC_{D,\epsilon_h} >0$ is given in \eqref{eq:cDexp}.
\end{lemma}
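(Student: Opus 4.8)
The plan is to exploit the explicit factor $\lambda(t-n)\le\lambda$ that multiplies both $F_2$ and $F_3$ in \eqref{eq:decompterms}, together with a $\lambda$-uniform polynomial growth bound for $\nabla h_\lambda$, the polynomial growth of $h_\lambda$ recorded in Lemma~\ref{lem:hlambdaestimates}-\ref{lem:hlambdaestimates2}, and the uniform-in-time moment bounds of Lemma~\ref{lem:2ndpthmmt}. The first preliminary step is to establish that there is $\cc_{\nabla}>0$, depending only on $a,K_H,K_h,l$, with $\|\nabla h_\lambda(\theta)\|\le\cc_{\nabla}(1+|\theta|^{l})$ for all $\theta\in\R^d$, $0<\lambda<1$. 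This follows by differentiating \eqref{eq:ktulah}: the constant term contributes $a\cI_d$, while the quotient rule applied to $(h(\theta)-a\theta)/(1+\lambda|\theta|^{(l+1)/\epsilon_h})^{\epsilon_h}$ produces one piece bounded by $\|H(\theta)-a\cI_d\|\le K_H(1+|\theta|^l)+a$ (the taming factor is $\ge1$) and a second piece which, after using $\lambda|\theta|^{(l+1)/\epsilon_h}/(1+\lambda|\theta|^{(l+1)/\epsilon_h})\le 1$ together with $(l+1)/\epsilon_h\ge4$, is controlled by $(l+1)\big(K_h(1+|\theta|^{l})+a\big)$; this is essentially a calculation already carried out in the proof of Lemma~\ref{lem:hlambdaestimates}. (Alternatively, one may combine the polynomial Lipschitz estimate for $\nabla h_\lambda$ from Lemma~\ref{lem:hlambdaestimates}-\ref{lem:hlambdaestimates3} with $\nabla h_\lambda(0)=H(0)$, accepting a larger polynomial degree, which is harmless since Lemma~\ref{lem:2ndpthmmt} supplies moments of every even order.)

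For the $F_3$ term, write $F_3(\theta)=-\lambda(t-n)\,\E[h_\lambda(\overline{\theta}^{\lambda}_n)\mid\overline{\theta}^{\lambda}_t=\theta]$ and apply conditional Jensen to get $|\nabla h_\lambda(\overline{\theta}^{\lambda}_t)F_3(\overline{\theta}^{\lambda}_t)|^2\le\lambda^2\|\nabla h_\lambda(\overline{\theta}^{\lambda}_t)\|^2\,\E[|h_\lambda(\overline{\theta}^{\lambda}_n)|^2\mid\overline{\theta}^{\lambda}_t]$. Taking expectations, removing the conditioning by the tower property, and then Cauchy--Schwarz, one finds $\E[|\nabla h_\lambda(\overline{\theta}^{\lambda}_t)F_3(\overline{\theta}^{\lambda}_t)|^2]\le\lambda^2\big(\E\|\nabla h_\lambda(\overline{\theta}^{\lambda}_t)\|^4\big)^{1/2}\big(\E|h_\lambda(\overline{\theta}^{\lambda}_n)|^4\big)^{1/2}$. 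By the growth bounds above and Lemma~\ref{lem:hlambdaestimates}-\ref{lem:hlambdaestimates2}, the two factors reduce to polynomial moments of $\overline{\theta}^{\lambda}_t$ and $\overline{\theta}^{\lambda}_n$ of orders $4l$ and $4(l+1)$, which are finite uniformly in $t$ and in $\lambda\le\lambda_{\max}$ by Lemma~\ref{lem:2ndpthmmt}-\ref{lem:2ndpthmmtii} (with $p=2l$ and $p=2(l+1)$, both in $\N$); hence this term is at most $C\lambda^2$.

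For the $F_2$ term, the bracket in its definition telescopes: by the interpolation dynamics \eqref{eq:ktulaproc} one has $\overline{\theta}^{\lambda}_n-\overline{\theta}^{\lambda}_t+\lambda(t-n)(-h_\lambda(\overline{\theta}^{\lambda}_n))=-\sqrt{2\lambda\beta^{-1}}\,(B^{\lambda}_t-B^{\lambda}_n)$, so $F_2(\theta)=\lambda(t-n)\sqrt{2\lambda\beta^{-1}}\,\E[\nabla h_\lambda(\overline{\theta}^{\lambda}_n)(B^{\lambda}_t-B^{\lambda}_n)\mid\overline{\theta}^{\lambda}_t=\theta]$. Using $t-n\le1$, this gives $|\nabla h_\lambda(\overline{\theta}^{\lambda}_t)F_2(\overline{\theta}^{\lambda}_t)|\le\sqrt{2\beta^{-1}}\,\lambda^{3/2}\,\|\nabla h_\lambda(\overline{\theta}^{\lambda}_t)\|\,\E[\|\nabla h_\lambda(\overline{\theta}^{\lambda}_n)\|\,|B^{\lambda}_t-B^{\lambda}_n|\mid\overline{\theta}^{\lambda}_t]$, and the same chain of steps as before (conditional Jensen, tower property, then Hölder with exponents $(3,3,3)$) yields $\E[|\nabla h_\lambda(\overline{\theta}^{\lambda}_t)F_2(\overline{\theta}^{\lambda}_t)|^2]\le 2\beta^{-1}\lambda^3\big(\E\|\nabla h_\lambda(\overline{\theta}^{\lambda}_t)\|^6\big)^{1/3}\big(\E\|\nabla h_\lambda(\overline{\theta}^{\lambda}_n)\|^6\big)^{1/3}\big(\E|B^{\lambda}_t-B^{\lambda}_n|^6\big)^{1/3}$. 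Since $B^{\lambda}_t-B^{\lambda}_n$ is centred Gaussian with covariance $(t-n)\cI_d$, $\E|B^{\lambda}_t-B^{\lambda}_n|^6=d(d+2)(d+4)(t-n)^3\le d(d+2)(d+4)$, and the growth bound for $\nabla h_\lambda$ together with Lemma~\ref{lem:2ndpthmmt}-\ref{lem:2ndpthmmtii} (with $p=3l$) bounds the remaining factors, so this term is at most $C\lambda^3\le C\lambda^2$ for $\lambda\le\lambda_{\max}\le1$.

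Adding the two estimates produces the asserted bound, with $\cC_{D,\epsilon_h}$ assembled explicitly from $\cc_{\nabla}$, the constants of Lemmas~\ref{lem:hlambdaestimates} and \ref{lem:2ndpthmmt}, and $\beta,d$. The argument is almost entirely bookkeeping; I expect the only genuinely non-routine point to be the preliminary polynomial growth bound for $\nabla h_\lambda$ that is uniform in $\lambda$ and of small polynomial degree, which calls for a short but careful differentiation of the taming factor in \eqref{eq:ktulah}. Everything afterwards is a matter of matching polynomial degrees against the even-order moments guaranteed uniformly in time by Lemma~\ref{lem:2ndpthmmt}.
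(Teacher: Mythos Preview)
Your proposal is correct and follows essentially the same strategy as the paper: identify the bracket in $F_2$ as $-\sqrt{2\lambda\beta^{-1}}(B^{\lambda}_t-B^{\lambda}_n)$, then reduce both terms to polynomial moments via growth bounds and Lemma~\ref{lem:2ndpthmmt}. Two minor technical differences are worth noting. First, for the factor $\|\nabla h_\lambda(\overline{\theta}^{\lambda}_n)\|$ inside $F_2$, the paper simply invokes the uniform bound $\|\nabla h_\lambda\|\le\cL_0\lambda^{-\epsilon_h}$ from \eqref{eq:nablahlbdub} and then applies Cauchy--Schwarz to the remaining two factors; this yields $\lambda^{3-2\epsilon_h}\le\lambda^2$ (using $\epsilon_h\le1/2$), while your three-way H\"older gives $\lambda^3$. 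Second, your preliminary degree-$l$ growth bound $\|\nabla h_\lambda(\theta)\|\le\cc_\nabla(1+|\theta|^l)$ is sharper than the degree-$((l+1)/\epsilon_h+l)$ bound the paper extracts from Lemma~\ref{lem:hlambdaestimates}-\ref{lem:hlambdaestimates3} (cf.\ \eqref{eq:nablahlbdaue}); your bound requires only moments of order $6l$ rather than order $\mathcal{O}(l/\epsilon_h)$, which would give a better dependence of $\cC_{D,\epsilon_h}$ on $d/\beta$ and $\epsilon_h$ than the explicit constant recorded in \eqref{eq:cDexp}.
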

\begin{proof}See Appendix \ref{lem:decomptermsF2n3proof}.
\end{proof}

Finally, we present an upper estimate associated with the last two terms in \eqref{eq:decompterms} in the following lemma.
\begin{lemma}\label{lem:decomptermsrt} Let Assumptions \ref{asm:initialc}, \ref{asm:polylip}, and \ref{asm:dissip} hold. Let $\epsilon_h \in (0,1/2]$. Then, we have, for any $0<\lambda\leq \lambda_{\max}$, $n\in\N_0$, $t\in(n ,n+1 ]$, that
\[
\E \left[| r_t(\overline{\theta}^{\lambda}_t)  |^2\right] +\E \left[| \overline{r}_t(\overline{\theta}^{\lambda}_t)  |^2\right] \leq \cC_{D,\epsilon_h} \lambda^2,
\]
where $\cC_{D,\epsilon_h}>0$ is given in \eqref{eq:cDexp}.
\end{lemma}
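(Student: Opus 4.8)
The plan is to bound the two remainder terms in \eqref{eq:decompterms} separately, with $\overline{r}_t$ being essentially immediate and $r_t$ requiring a short first-order Taylor argument with integral remainder.

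For $\overline{r}_t$, I would first use Jensen's inequality for conditional expectations together with the tower property to reduce $\E[|\overline{r}_t(\overline{\theta}^{\lambda}_t)|^2]$ to $\E[|h_{\lambda}(\overline{\theta}^{\lambda}_t)-h(\overline{\theta}^{\lambda}_t)|^2]$. Lemma \ref{lem:hlambdaestimates}-\ref{lem:hlambdaestimates4} bounds the integrand by $4\lambda^2(K_h+a)^2(1+|\overline{\theta}^{\lambda}_t|^{2(l+1)(1+1/\epsilon_h)})$, and the uniform-in-time moment estimate of Lemma \ref{lem:2ndpthmmt}-\ref{lem:2ndpthmmtii}, applied at order $p=\lceil(l+1)(1+1/\epsilon_h)\rceil$, then turns this into a bound of the form $C\lambda^2$ with $C$ depending only on the problem data, $d$ and $\beta$.

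For $r_t$, I would expand $h_\lambda$ to first order with integral remainder around $\overline{\theta}^{\lambda}_t$: with $\Delta\coloneqq\overline{\theta}^{\lambda}_n-\overline{\theta}^{\lambda}_t$,
\[
h_{\lambda}(\overline{\theta}^{\lambda}_n)-h_{\lambda}(\overline{\theta}^{\lambda}_t)-\nabla h_{\lambda}(\overline{\theta}^{\lambda}_t)\Delta=\int_0^1\bigl(\nabla h_{\lambda}(\overline{\theta}^{\lambda}_t+s\Delta)-\nabla h_{\lambda}(\overline{\theta}^{\lambda}_t)\bigr)\Delta\,\rmd s,
\]
so taking the conditional expectation given $\overline{\theta}^{\lambda}_t=\theta$ recovers exactly $r_t(\theta)$, and conditional Jensen bounds $\E[|r_t(\overline{\theta}^{\lambda}_t)|^2]$ by the expectation of the square of the right-hand side. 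Using the polynomial gradient-Lipschitz estimate in Lemma \ref{lem:hlambdaestimates}-\ref{lem:hlambdaestimates3} (together with $|s\Delta|\le|\Delta|$ and $|\overline{\theta}^{\lambda}_t+s\Delta|\le|\overline{\theta}^{\lambda}_n|+|\overline{\theta}^{\lambda}_t|$) this is at most
\[
\cL_{\nabla,\epsilon_h}^2\,\E\Bigl[\bigl(1+|\overline{\theta}^{\lambda}_n|+2|\overline{\theta}^{\lambda}_t|\bigr)^{2((l+1)(1/\epsilon_h+1)-2)}|\Delta|^4\Bigr].
\]
To extract the $\lambda^2$ factor I would then invoke the interpolation \eqref{eq:ktulaproc}: on $t\in(n,n+1]$ we have $\Delta=-\lambda(t-n)h_{\lambda}(\overline{\theta}^{\lambda}_n)+\sqrt{2\lambda\beta^{-1}}(B^{\lambda}_n-B^{\lambda}_t)$, so using $t-n\le1$, $\lambda\le\lambda_{\max}\le1$, the linear-growth bound $|h_{\lambda}(\overline{\theta}^{\lambda}_n)|\le(2a+K_h)(1+|\overline{\theta}^{\lambda}_n|^{l+1})$ of Lemma \ref{lem:hlambdaestimates}-\ref{lem:hlambdaestimates2}, and $\lambda^4\le\lambda^2$, one gets $|\Delta|^4\le C\lambda^2(1+|\overline{\theta}^{\lambda}_n|^{4(l+1)}+|B^{\lambda}_n-B^{\lambda}_t|^4)$. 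Substituting this in and applying Hölder's inequality separates the expectation into high-order moments of $\overline{\theta}^{\lambda}_n,\overline{\theta}^{\lambda}_t$, which are uniformly bounded in $n$ and $\lambda$ by Lemma \ref{lem:2ndpthmmt}-\ref{lem:2ndpthmmtii}, and Gaussian moments of the standard Brownian increment $B^{\lambda}_n-B^{\lambda}_t$ over an interval of length at most one, which are absolute constants; hence $\E[|r_t(\overline{\theta}^{\lambda}_t)|^2]\le C\lambda^2$. Adding the two bounds gives the lemma with $\cC_{D,\epsilon_h}$ as in \eqref{eq:cDexp}.

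The only real obstacle is bookkeeping rather than anything conceptual: since the polynomial degree $2((l+1)(1/\epsilon_h+1)-2)$ of the gradient-Lipschitz constant grows as $\epsilon_h\to0$, I must apply the moment estimate of Lemma \ref{lem:2ndpthmmt}-\ref{lem:2ndpthmmtii} at a correspondingly large integer order $p$, choose the Hölder exponents so that every factor falls under an available moment bound, and carefully track how the resulting constant $\cC_{D,\epsilon_h}$ depends on $d$, $\beta$, $l$ and $\epsilon_h$. No estimate beyond Lemma \ref{lem:hlambdaestimates} and Lemma \ref{lem:2ndpthmmt} is required.
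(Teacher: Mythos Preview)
Your proposal is correct and follows essentially the same route as the paper: a first-order Taylor expansion with integral remainder for $r_t$, the pointwise bound of Lemma~\ref{lem:hlambdaestimates}-\ref{lem:hlambdaestimates4} for $\overline{r}_t$, and the moment estimates of Lemma~\ref{lem:2ndpthmmt} to close. The only cosmetic difference is that the paper applies Cauchy--Schwarz first to split off $\E[|\overline{\theta}^{\lambda}_n-\overline{\theta}^{\lambda}_t|^8]$ and then shows this is $\mathcal{O}(\lambda^4)$, whereas you bound $|\Delta|^4$ pointwise to extract $\lambda^2$ before applying H\"older; both orderings are equivalent here. (Minor typo: in your expression for $\Delta$ the drift term should carry a plus sign, but since you only use $|\Delta|$ this is immaterial.)
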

\begin{proof}See Appendix \ref{lem:decomptermsrtproof}.
\end{proof}

\begin{proof}[\textbf{Proof of Theorem \ref{thm:mainthm}}]
Let $\epsilon_h \in (0,1/2]$ and $\epsilon>0$. By using \eqref{eq:changeofKLint}, \eqref{eq:decompterms}, Lemma~\ref{lem:decomptermsF1}, Lemma~\ref{lem:ubforfisherinfo}, Lemma~\ref{lem:decomptermsF2n3}, and Lemma~\ref{lem:decomptermsrt}, we obtain for any $0<\lambda\leq \lambda_{\max}$, $n\in\N_0$, $t\in(n ,n+1 ]$, that
\begin{align*}
\frac{\rmd}{\rmd t}\rmKL(\pi^{\lambda}_t\|\pi_{\beta}) 
&\leq   -(3/4)\lambda J(\pi^{\lambda}_t\|\pi_{\beta})+20\beta\lambda^{3-\epsilon_h-\epsilon(1-\epsilon_h/2)}(\cC_{D,\epsilon,\epsilon_h}\cC_{J,\epsilon_h}^{1-\epsilon/2}+2\cC_{D,\epsilon_h} )\\
&\leq  -(3C_{\rmLS}/2)\lambda \rmKL(\pi^{\lambda}_t\|\pi_{\beta})+20\beta\lambda^{3-\epsilon_h-\epsilon(1-\epsilon_h/2)}(\cC_{D,\epsilon,\epsilon_h}\cC_{J,\epsilon_h}^{1-\epsilon/2}+2\cC_{D,\epsilon_h} ),
\end{align*}
where the second inequality holds due to Assumption \ref{asm:LSI} (see also \cite[Definition 2.2]{lytras2025taming}). Finally, straightforward calculations yields
\[
\rmKL(\pi^{\lambda}_t\|\pi_{\beta}) \leq e^{-C_0\lambda t}\rmKL(\pi^{\lambda}_0\|\pi_{\beta}) +C_1\lambda^{2-\epsilon_h-\epsilon(1-\epsilon_h/2)},
\]
where $C_0	 \coloneqq 3C_{\rmLS}/2$ and $ C_1	 \coloneqq (40\beta/(3C_{\rmLS})) (\cC_{D,\epsilon,\epsilon_h}\cC_{J,\epsilon_h}^{1-\epsilon/2}+2\cC_{D,\epsilon_h} )$ with $\cC_{D,\epsilon,\epsilon_h}, \cC_{J,\epsilon_h}, \cC_{D,\epsilon_h} >0$ given in \eqref{eq:cDepsilonexp}, \eqref{eq:cJexp}, and \eqref{eq:cDexp}, respectively. This implies, for any $0<\lambda\leq \lambda_{\max}$, $n\in\N_0$, that
\begin{equation}\label{eq:KLub}
\rmKL(\pi^{\lambda}_n\|\pi_{\beta}) \leq e^{-C_0\lambda n}\rmKL(\pi^{\lambda}_0\|\pi_{\beta}) +C_1\lambda^{2-\epsilon_h-\epsilon(1-\epsilon_h/2)}.
\end{equation}
which completes the proof.
\end{proof}

\begin{proof}[\textbf{Proof of Corollary \ref{crl}}]
Let $\epsilon_h \in (0,1/2]$ and $\epsilon>0$. By Assumption \ref{asm:LSI}, \eqref{eq:KLub} (in the proof of Theorem~\ref{thm:mainthm}), and by using Talagrand inequality, we obtain, for any $0<\lambda\leq \lambda_{\max}$, $n\in\N_0$, that
\[
W_2(\mathcal{L}(\theta^{\lambda}_n),\pi_{\beta}) \leq C_2\left(e^{-C_0\lambda n}\rmKL(\pi^{\lambda}_0\|\pi_{\beta}) +C_1\lambda^{2-\epsilon_h-\epsilon(1-\epsilon_h/2)}\right)^{1/2},
\]
where $C_0	 \coloneqq 3C_{\rmLS}/2$, $ C_1	 \coloneqq (40\beta/(3C_{\rmLS})) (\cC_{D,\epsilon,\epsilon_h}\cC_{J,\epsilon_h}^{1-\epsilon/2}+2\cC_{D,\epsilon_h} )$, and $C_2 \coloneqq (2C_{\rmLS})^{1/2}$.
\end{proof}

\begin{proof}[\textbf{Proof of Corollary \ref{crl:eer}}] To establish an upper bound for the expected excess risk, we consider the following splitting:
\begin{equation}\label{eq:eersplitting}
\E[u( \theta_n^{\lambda})] - \inf_{\theta \in \R^d} u(\theta) = \E[u( \theta_n^{\lambda})] - \E[u( Z_{\infty})] + \E[u( Z_{\infty})] - \inf_{\theta \in \R^d} u(\theta),
\end{equation}
where $Z_{\infty}$ is an $\R^d$-valued random variable with $\mathcal{L}(Z_{\infty}) = \pi_{\beta}$. An upper estimate for the first term on the RHS of \eqref{eq:eersplitting} can be obtained by adapting the proof of \cite[Lemma 4.8]{lim2021nonasymptotic} using Corollary \ref{crl}, Assumptions~\ref{asm:polylip} and \ref{asm:dissip}, which is given by
\[
\E[u( \theta_n^{\lambda})] - \E[u( Z_{\infty})] \leq C_3e^{-C_0\lambda n/2}+C_4\lambda^{1-\epsilon_h/2-\epsilon(1/2-\epsilon_h/4)},
\]
where
\begin{align*}
C_3	&\coloneqq 2^l K_hC_2\rmKL(\pi^{\lambda}_0\|\pi_{\beta})^{1/2}\left( \left(\E[|\theta_0|^{2l+2}]+ \cc_{l+1}(1+1/a)\right)^{1/2}+(2(b+(d+2l)/\beta)/a)^{(l+1)/2}+1\right),\\
C_4	&\coloneqq C_2C_1^{1/2}\left( \left(\E[|\theta_0|^{2l+2}]+ \cc_{l+1}(1+1/a)\right)^{1/2}+(2(b+(d+2l)/\beta)/a)^{(l+1)/2}+1\right)
\end{align*}
with $\cc_{l+1}$ given in \eqref{eq:2pthmmtexpconst}. Moreover, an upper bound for the second term on the RHS of \eqref{eq:eersplitting} can be obtained by adapting the proof of \cite[Lemma 4.9]{lim2021nonasymptotic} using Remark \ref{rmk:polyliph} and Assumption \ref{asm:dissip}, which is given by
\[
\E[u( Z_{\infty})] - \inf_{\theta \in \R^d} u(\theta) \leq C_5/\beta,
\]
where
\[
C_5\equiv C_5(\beta) \coloneqq \frac{d}{2}\log\left(\frac{K_H e (1+4\max\{\sqrt{b/a},\sqrt{2d/(\beta K_H)}\})^l}{a}\left(\frac{\beta b}{d}+1\right)\right)+\log 2.
\]
Combining the two upper bounds yields the desired result.
\end{proof}

\newpage
\appendix
\section{Proof of auxiliary results}\label{appen:aux}
\subsection{Proof of auxiliary results in Section \ref{sec:main}} 
\begin{proof}[\textbf{Proof of statements in Remark \ref{rmk:polyliph}}]\label{rmk:polyliphproof}
For fixed $\theta, \overline{\theta} \in\R^d$, we denote by $g_{\theta, \overline{\theta}}(t) \coloneqq h(t\theta+(1-t)\overline{\theta})$, $t\in[0,1]$, and observe that $g_{\theta, \overline{\theta}}'(t) = H(t\theta+(1-t)\overline{\theta})(\theta-\overline{\theta})$ with $g_{\theta, \overline{\theta}}'$ denoting the derivative of $g_{\theta, \overline{\theta}}$. Then, we obtain that
\[
h(\theta) - h(\overline{\theta}) = g_{\theta, \overline{\theta}}(1) - g_{\theta, \overline{\theta}}(0) = \int_0^1 g_{\theta, \overline{\theta}}'(t)\,\rmd t = \int_0^1 H(t\theta+(1-t)\overline{\theta})(\theta-\overline{\theta})\,\rmd t,
\]
which implies, by using Assumption \ref{asm:polylip}, that
\begin{align*}
|h(\theta) - h(\overline{\theta}) | 
&\leq  \int_0^1 \|H(t\theta+(1-t)\overline{\theta})\|\,\rmd t|\theta-\overline{\theta}|\\
&\leq K_H \int_0^1 (1+|t\theta+(1-t)\overline{\theta}|^{l})\, \rmd t|\theta-\overline{\theta}|\\
&\leq K_H \int_0^1 (1+|t\theta+(1-t)\overline{\theta}|)^{l}\, \rmd t|\theta-\overline{\theta}|\\
&\leq  K_H (1+|\theta|+|\overline{\theta}|)^{l }|\theta-\overline{\theta}|,
\end{align*}
which completes the proof.
\end{proof}

\subsection{Proof of auxiliary results in Section \ref{sec:app}} \label{sec:appapdx}

\begin{proof}[\textbf{Proof of Proposition \ref{prop:dw}}]\label{prop:dwproof}
We note that by Remark \ref{rmk:dwoptconst}, Assumption \ref{asm:polylip} is satisfied with $L=3, l=2, K_H=3, K_h=2$. 

Next, we have, for any $\theta\in\R^d$, that 
\[
\langle h(\theta),\theta\rangle =  \langle \theta(|\theta|^2-1),\theta\rangle \geq |\theta|^2/2-9/4,
\]
which implies Assumption \ref{asm:dissip} holds true with $a=1/2$ and $b=9/4$. Indeed, if $|\theta|\geq \sqrt{6}/2$, we have that
\[
\langle h(\theta),\theta\rangle =|\theta|^2 (  |\theta|^2-1)  \geq |\theta|^2/2 \geq |\theta|^2/2-9/4,
\]
while if $|\theta| \leq \sqrt{6}/2$, we obtain that
\[
\langle h(\theta),\theta\rangle =|\theta|^2 (  |\theta|^2-1)  \geq -|\theta|^2 = |\theta|^2/2-3|\theta|^2/2 \geq |\theta|^2/2-9/4.
\]

Finally, to show Assumption \ref{asm:LSI} holds, we first prove that $u$ defined in \eqref{eq:dwpotential} satisfies the convexity at infinity condition \cite[Eq. (7)]{lytras2025taming}. Indeed, we have, for any $\theta, \overline{\theta}\in\R^d$, that
\begin{align*}
\langle h(\theta)-h(\overline{\theta}),\theta-\overline{\theta}\rangle 
&=  \langle (\theta|\theta|^2-\theta)-(\overline{\theta}|\overline{\theta}|^2-\overline{\theta}),\theta-\overline{\theta}\rangle\\
&= \langle (\theta|\theta|^2-\overline{\theta}|\theta|^2+\overline{\theta}|\theta|^2-\overline{\theta}|\overline{\theta}|^2,\theta-\overline{\theta}\rangle/2\\
&\quad +\langle (\theta|\theta|^2-\theta|\overline{\theta}|^2+\theta|\overline{\theta}|^2-\overline{\theta}|\overline{\theta}|^2,\theta-\overline{\theta}\rangle/2-|\theta-\overline{\theta}|^2\\
&=(|\theta|^2+|\overline{\theta}|^2)|\theta-\overline{\theta}|^2/2+(|\theta|^2-|\overline{\theta}|^2)^2/2-|\theta-\overline{\theta}|^2\\
&\geq ((|\theta|^2+|\overline{\theta}|^2)/2-1)|\theta-\overline{\theta}|^2,
\end{align*}
which implies \cite[Eq. (7)]{lytras2025taming} holds with $c_1=1/2$, $c_2=0$, $c_3=1$, $2r=2$, $0<l<1$. Hence, $\pi_{\beta}$ with $u$ given in \eqref{eq:dwpotential} satisfies our Assumption \ref{asm:LSI} according to \cite[Theorem 5.3]{lytras2025taming}.
\end{proof}

\begin{proof}[\textbf{Proof of Proposition \ref{prop:opt}}]\label{prop:optproof}
First, we show Assumption \ref{asm:polylip} is satisfied. Recall the expression of $u$ given in \eqref{eq:optobj}. For each $i = 1,\dots, d_1$, denote by
\[
\theta_Z^i \coloneqq \langle c_0^{i\cdot}, z\rangle+b_0^i, \quad \sigma_s^i \coloneqq  1/(1+e^{-(\langle c_0^{i\cdot}, z\rangle+b_0^i)}),
\] 
then we have, for any $\theta=(W_1, b_0)\in\R^d$, that
\[
\E\left[(Y-\mathfrak{N}(\theta,Z))^2\right]=\E\left[\left(Y-\sum_{i=1}^{d_1}W_1^{1i}\theta_Z^i \sigma_s^i\right)^2\right].
\]
By using this notation, we have, for any $\theta\in\R^d$, $i,j = 1,\dots, d_1$ with $i\neq j$, that
\begin{align*}
\partial_{W_1^{1i}}u(\theta) &=-2\E\left[(Y-\mathfrak{N}(\theta,Z))\theta_Z^i  \sigma_s^i \right]+\eta W_1^{1i}|\theta|^4,\\
\partial_{b_0^i}u(\theta) &=-2\E\left[(Y-\mathfrak{N}(\theta,Z))W_1^{1i}( \sigma_s^i+\theta_Z^i \sigma_s^i(1- \sigma_s^i))\right]+\eta b_0^i|\theta|^4,\\
\partial_{(W_1^{1i})^2}u(\theta) &=2\E\left[(\theta_Z^i)^2 (\sigma_s^i)^2\right]+\eta |\theta|^4 +4\eta |\theta|^2(W_1^{1i})^2,\\
\partial_{W_1^{1i}W_1^{1j}}u(\theta) &=2\E\left[\theta_Z^i \sigma_s^i\theta_Z^j \sigma_s^j\right]+4\eta |\theta|^2W_1^{1i}W_1^{1j},\\
\partial_{(b_0^i)^2}u(\theta) &=2\E\left[(W_1^{1i})^2( \sigma_s^i+\theta_Z^i \sigma_s^i(1- \sigma_s^i))^2\right] +\eta |\theta|^4 +4\eta |\theta|^2(b_0^i)^2\\
&\quad -2\E\left[(Y-\mathfrak{N}(\theta,Z))W_1^{1i}( 2\sigma_s^i(1- \sigma_s^i)+\theta_Z^i \sigma_s^i(1- \sigma_s^i)(1- 2\sigma_s^i))\right],\\
\partial_{b_0^ib_0^j}u(\theta) &=2\E\left[W_1^{1i}W_1^{1j}(\sigma_s^i+\theta_Z^i \sigma_s^i(1- \sigma_s^i))(\sigma_s^j+\theta_Z^j \sigma_s^j(1- \sigma_s^j))\right]  +4\eta |\theta|^2b_0^ib_0^j,\\
\partial_{W_1^{1i}b_0^i}u(\theta) &=2\E\left[\theta_Z^i \sigma_s^i W_1^{1i}( \sigma_s^i+\theta_Z^i \sigma_s^i(1- \sigma_s^i))\right] \\
&\quad - 2\E\left[(Y-\mathfrak{N}(\theta,Z))( \sigma_s^i+\theta_Z^i \sigma_s^i(1- \sigma_s^i))\right]+4\eta |\theta|^2W_1^{1i}b_0^i,\\
\partial_{W_1^{1i}b_0^j}u(\theta) &=2\E\left[\theta_Z^i \sigma_s^i W_1^{1j}( \sigma_s^j+\theta_Z^j \sigma_s^j(1- \sigma_s^j))\right]+4\eta |\theta|^2W_1^{1i}b_0^j,\\
\partial_{b_0^iW_1^{1j}}u(\theta) &=2\E\left[\theta_Z^j \sigma_s^j W_1^{1i}( \sigma_s^i+\theta_Z^i\sigma_s^i(1- \sigma_s^i))\right]+4\eta |\theta|^2W_1^{1j}b_0^i.
\end{align*}
Then, straight forward calculations yields, for any $\theta, \overline{\theta}\in\R^d$, that
\begin{align*}
\|H(\theta) - H(\overline{\theta})\|
&\leq (320d_1^2+20\eta)\E[1+|X|](1+|c_0|)(1+|\theta|+| \overline{\theta}|)^3|\theta- \overline{\theta}|,\\
\|H(\theta) \| &\leq (256d_1^2+5\eta)\E[(1+|X|)^2](1+|c_0|)^2(1+|\theta|^4),\\
|h(\theta)|&\leq (48d_1+\eta)\E[(1+|X|)^2](1+|c_0|)^2(1+|\theta|^5),
\end{align*}
which implies that Assumption \ref{asm:polylip} holds with $L =(320d_1^2+20\eta)\E[1+|X|](1+|c_0|)$, $l =4$, $K_H =  (256d_1^2+5\eta)\E[(1+|X|)^2](1+|c_0|)^2$, and $K_h=(48d_1+\eta)\E[(1+|X|)^2](1+|c_0|)^2$.

Next, we prove Assumption \ref{asm:dissip} holds. We have, for any $\theta \in\R^d$, that
\begin{align*}
\langle h(\theta),\theta\rangle  
&= \sum_{i=1}^{d_1}\left(W_1^{1i}\partial_{W_1^{1i}}u(\theta)+b_0^i\partial_{b_0^i}u(\theta)\right)\\
&\geq \eta|\theta|^6 - 48d_1\E[(1+|X|)^2](1+|c_0|)^2(1+|\theta|^4)\\
&\geq (\eta/2)|\theta|^2- 96\eta d_1\E[(1+|X|)^2](1+|c_0|)^2/\min\{1,\eta\}\\
&\quad   -96^3d_1^3\E[(1+|X|)^6](1+|c_0|)^6/\min\{1,\eta\}^2,
\end{align*}
where the last inequality hold due to the fact that, if $|\theta|> (96d_1\E[(1+|X|)^2](1+|c_0|)^2/\min\{1,\eta\})^{1/2}$,
\begin{align*}
&\eta|\theta|^6 - 48d_1\E[(1+|X|)^2](1+|c_0|)^2(1+|\theta|^4)\\
&= (\eta/2)|\theta|^6- 48d_1\E[(1+|X|)^2](1+|c_0|)^2 + (\eta/2)|\theta|^6- 48d_1\E[(1+|X|)^2](1+|c_0|)^2 |\theta|^4\\
&\geq  (\eta/2)|\theta|^2- 48d_1\E[(1+|X|)^2](1+|c_0|)^2 \\
&\geq  (\eta/2)|\theta|^2- 96\eta d_1\E[(1+|X|)^2](1+|c_0|)^2/\min\{1,\eta\}\\
&\quad   -96^3d_1^3\E[(1+|X|)^6](1+|c_0|)^6/\min\{1,\eta\}^2,
\end{align*}
while if $|\theta|\leq (96d_1\E[(1+|X|)^2](1+|c_0|)^2/\min\{1,\eta\})^{1/2}$,
\begin{align*}
&\eta|\theta|^6 - 48d_1\E[(1+|X|)^2](1+|c_0|)^2(1+|\theta|^4)\\
&\geq (\eta/2)|\theta|^2 - (\eta/2)|\theta|^2 - 48d_1\E[(1+|X|)^2](1+|c_0|)^2(1+|\theta|^4)\\
&\geq (\eta/2)|\theta|^2 - 96\eta d_1\E[(1+|X|)^2](1+|c_0|)^2/\min\{1,\eta\}\\
&\quad   -96^3d_1^3\E[(1+|X|)^6](1+|c_0|)^6/\min\{1,\eta\}^2.
\end{align*}
Hence, Assumption \ref{asm:dissip} holds with $a=\eta/2$ and $b=96\eta d_1\E[(1+|X|)^2](1+|c_0|)^2/\min\{1,\eta\} +96^3d_1^3\E[(1+|X|)^6](1+|c_0|)^6/\min\{1,\eta\}^2$.

Finally, we show that $u$ defined in \eqref{eq:optobj} satisfies the convexity at infinity condition \cite[Eq. (7)]{lytras2025taming}, which implies that Assumption~\ref{asm:LSI} holds by \cite[Theorem 5.3]{lytras2025taming}. Indeed, for any $\theta, \overline{\theta}=(\overline{W}_1,\overline{b}_0)\in\R^d$, we have that
\begin{align*}
&\langle h(\theta)-h(\overline{\theta}), \theta-\overline{\theta} \rangle\\
&=\sum_{i=1}^{d_1}\left( (\partial_{W_1^{1i}}u(\theta) - \partial_{W_1^{1i}}u(\overline{\theta}))(W_1^{1i} - \overline{W}_1^{1i}) + (\partial_{b_0^i}u(\theta) - \partial_{b_0^i}u(\overline{\theta}))(b_0^i - \overline{b}_0^i)\right)\\
&\geq (\eta/2)(|\theta|^4+|\overline{\theta}|^4)|\theta-\overline{\theta}|^2 - 126d_1\E[(1+|X|)^2](1+|c_0|)^2(1+|\theta|^2+|\overline{\theta}|^2)|\theta-\overline{\theta}|^2,
\end{align*}
which indicates that \cite[Eq. (7)]{lytras2025taming} holds with $c_1=\eta/2$, $c_2=c_3=126d_1\E[(1+|X|)^2](1+|c_0|)^2$, $2r=4$, $l=2$.
\end{proof}

\subsection{Proof of auxiliary results in Section \ref{sec:mtproofs}} \label{sec:mtproofsapdx}
\begin{proof}[\textbf{Proof of Lemma \ref{lem:hlambdaestimates}}]\label{lem:hlambdaestimatesproof}
Recall the definition of $h_{\lambda}$ in \eqref{eq:ktulah}, and let $0<\lambda <1$ and $\epsilon_h \in (0,1/2]$. 

To prove \ref{lem:hlambdaestimates1}, we use Assumption \ref{asm:dissip} to obtain, for any $\theta\in\R^d$, that
\begin{align*}
\langle h_{\lambda}(\theta), \theta\rangle 	
&= \left\langle a\theta +\frac{h(\theta)-a\theta}{(1+\lambda|\theta|^{ (l+1)/\epsilon_h})^{\epsilon_h}}, \theta \right\rangle \\
&= a|\theta|^2+\frac{\langle h(\theta), \theta \rangle -a|\theta|^2}{(1+\lambda|\theta|^{ (l+1)/\epsilon_h})^{\epsilon_h}}\\
&\geq a|\theta|^2+\frac{a|\theta|^2-b -a|\theta|^2}{(1+\lambda|\theta|^{ (l+1)/\epsilon_h})^{\epsilon_h}}\\
&\geq a|\theta|^2-b.
\end{align*}

To prove \ref{lem:hlambdaestimates2}, we use Assumption \ref{asm:polylip} to obtain, for any $\theta\in\R^d$, that
\begin{align*}
|h_{\lambda}(\theta)|
&=\left|a\theta +\frac{h(\theta)-a\theta}{(1+\lambda|\theta|^{ (l+1)/\epsilon_h})^{\epsilon_h}}\right|\\
&\leq 2a|\theta|+\frac{|h(\theta)|}{(1+\lambda|\theta|^{ (l+1)/\epsilon_h})^{\epsilon_h}}\\
&\leq 2a|\theta|+\frac{2^{1-\epsilon_h}K_h(1+|\theta|^{l+1})}{1+\lambda^{\epsilon_h}|\theta|^{l+1}}\\
&\leq 2a|\theta|+2^{1-\epsilon_h} K_h\lambda^{-\epsilon_h}\\
& \leq 2a|\theta|+2K_h\lambda^{-1/2},
\end{align*}
where the second inequality holds due to $(\rho+\iota)^{\epsilon_h} \geq 2^{\epsilon_h-1}(\rho^{\epsilon_h}+\iota^{\epsilon_h})$, $\rho,\iota\geq 0$, for $\epsilon_h \in (0,1/2]$, and the third inequality holds due to $0<\lambda <1$. Moreover, by Assumption \ref{asm:polylip}, we can deduce, for any $\theta\in\R^d$, that
\begin{align*}
|h_{\lambda}(\theta)| \leq 2a|\theta|+|h(\theta)| \leq 2a|\theta|+K_h(1+|\theta|^{l+1})\leq (2a+K_h)(1+|\theta|^{l+1}),
\end{align*}
where the last inequality holds due to $|\theta|\leq 1+|\theta|^{l+1}$ with $l\in\N$.

To prove the first inequality in \ref{lem:hlambdaestimates3}, we fix $\theta, \overline{\theta} \in \R^d$ and denote by $g_{\theta,\overline{\theta},\lambda}(t) \coloneqq h_{\lambda}(t\theta+(1-t)\overline{\theta})$, $t\in [0,1]$. Denoting by $g_{\theta,\overline{\theta},\lambda}'$ its derivative, we have $g_{\theta,\overline{\theta},\lambda}'(t) = \nabla h_{\lambda}(t\theta+(1-t)\overline{\theta})(\theta-\overline{\theta})$, where for all $\theta\in\R^d$,
\begin{equation}\label{eq:nablahlambda}
\nabla h_{\lambda}(\theta) = a\cI_d +\frac{(1+\lambda|\theta|^{ (l+1)/\epsilon_h})(H(\theta)-a\cI_d) -\lambda(l+1) |\theta|^{(l+1)/\epsilon_h-2}(h(\theta)\theta^\cT-a\theta\theta^\cT)}{(1+\lambda|\theta|^{ (l+1)/\epsilon_h})^{1+\epsilon_h}}.
\end{equation}
Moreover, by Assumption \ref{asm:polylip}, it holds, for all $\theta\in\R^d$, that
\begin{align}\label{eq:nablahlbdub}
\|\nabla h_{\lambda}(\theta)\| 
&= \max_{v\in\R^d:|v|=1}|\nabla h_{\lambda}(\theta) v|\nonumber\\
& = \max_{v\in\R^d:|v|=1}\left|av +\frac{(1+\lambda|\theta|^{ (l+1)/\epsilon_h})(H(\theta)v-av) -\lambda(l+1)  |\theta|^{(l+1)/\epsilon_h-2}(h(\theta)-a\theta)\langle \theta,v\rangle)}{(1+\lambda|\theta|^{ (l+1)/\epsilon_h})^{1+\epsilon_h}}\right|\nonumber\\
&\leq 2a+\frac{\|H(\theta)\|}{(1+\lambda|\theta|^{ (l+1)/\epsilon_h})^{ \epsilon_h}}+\frac{\lambda(l+1)|\theta|^{(l+1)/\epsilon_h-1}(|h(\theta)|+a|\theta|)}{(1+\lambda|\theta|^{ (l+1)/\epsilon_h})^{1+\epsilon_h}}\nonumber\\
&\leq 2a+\frac{2^{1-\epsilon_h}\lambda^{-\epsilon_h}K_H(1+|\theta|^{l})}{1+|\theta|^{l+1}}+\frac{\lambda^{-\epsilon_h}(l+1)|\theta|^{(l+1)/\epsilon_h-1}(K_h(1+|\theta|^{l+1})+a|\theta|)}{1+|\theta|^{ (l+1)(1/\epsilon_h+1)}}\nonumber\\
&\leq 2a+4K_H\lambda^{-\epsilon_h}+(l+1)(2K_h+a)\lambda^{-\epsilon_h}\nonumber\\
&\leq \cL_0\lambda^{-\epsilon_h},
\end{align}
where $\cL_0\coloneqq 2a+4K_H+(l+1)(2K_h+a)$, and where the second inequality holds due to $0<\lambda <1$, $\epsilon_h \in (0,1/2]$, $(\rho+\iota)^{\epsilon_h} \geq 2^{\epsilon_h-1}(\rho^{\epsilon_h}+\iota^{\epsilon_h})$, $(\rho+\iota)^{1+\epsilon_h} \geq \rho^{1+\epsilon_h}+\iota^{1+\epsilon_h}$, $\rho,\iota\geq 0$, and the third inequality holds due to $|\theta|^{\rho}\leq 1+|\theta|^{\iota}$, $\iota\geq \rho\geq 0$. 
Thus, we obtain
\begin{equation}\label{eq:hlbdftoc}
|h_{\lambda}(\theta)  - h_{\lambda}(\overline{\theta}) |= \left|\int_0^1  \nabla h_{\lambda}(t\theta+(1-t)\overline{\theta}) \,\rmd t (\theta-\overline{\theta})\right|\leq \cL_0\lambda^{-\epsilon_h}|\theta-\overline{\theta}|.
\end{equation}
Next, we proceed to establish the second inequality in \ref{lem:hlambdaestimates3}. Recall the expression of $\nabla h_{\lambda}$ in \eqref{eq:nablahlambda}, we have, for any $\theta,\overline{\theta}\in\R^d$, that
\begin{align}\label{eq:nablahlambdalipub}
&\|\nabla h_{\lambda}(\theta) - \nabla h_{\lambda}(\overline{\theta})\|\nonumber\\
& = \max_{v\in\R^d:|v|=1}|(\nabla h_{\lambda}(\theta) - \nabla h_{\lambda}(\overline{\theta})) v|\nonumber\\
& =  \max_{v\in\R^d:|v|=1}\left|\left(\frac{H(\theta)-a\cI_d}{(1+\lambda|\theta|^{ (l+1)/\epsilon_h})^{\epsilon_h}} -\frac{H(\overline{\theta})-a\cI_d}{(1+\lambda|\overline{\theta}|^{(l+1)/\epsilon_h})^{\epsilon_h}} \right) v \right. \nonumber\\
&\qquad \left.-\left( \frac{\lambda(l+1) |\theta|^{(l+1)/\epsilon_h-2}(h(\theta)\theta^\cT-a\theta\theta^\cT)}{(1+\lambda|\theta|^{ (l+1)/\epsilon_h})^{1+\epsilon_h}} -  \frac{\lambda(l+1) |\overline{\theta}|^{(l+1)/\epsilon_h-2}(h(\overline{\theta})\overline{\theta}^\cT-a\overline{\theta}\overline{\theta}^\cT)}{(1+\lambda|\overline{\theta}|^{ (l+1)/\epsilon_h})^{1+\epsilon_h}}\right)v\right| \nonumber\\
&\leq \sum_{i=1}^3T_i(\theta,\overline{\theta}),
\end{align}
where
\begin{align*}
T_1(\theta,\overline{\theta})	&\coloneqq  \max_{v\in\R^d:|v|=1}\left| \frac{ H(\theta)v-av }{(1+\lambda|\theta|^{(l+1)/\epsilon_h})^{\epsilon_h}} -\frac{ H(\overline{\theta})v-av }{(1+\lambda|\overline{\theta}|^{(l+1)/\epsilon_h})^{\epsilon_h}}   \right|,\\
T_2(\theta,\overline{\theta}) 	&\coloneqq \max_{v\in\R^d:|v|=1}\left| \frac{\lambda(l+1) |\theta|^{(l+1)/\epsilon_h-2}h(\theta)\langle \theta,v\rangle}{(1+\lambda|\theta|^{(l+1)/\epsilon_h})^{1+\epsilon_h}} -  \frac{\lambda(l+1) |\overline{\theta}|^{(l+1)/\epsilon_h-2}h(\overline{\theta})\langle \overline{\theta},v\rangle}{(1+\lambda|\overline{\theta}|^{(l+1)/\epsilon_h})^{1+\epsilon_h}}   \right|,\\
T_3(\theta,\overline{\theta}) 	&\coloneqq \max_{v\in\R^d:|v|=1}\left| \frac{\lambda(l+1) |\theta|^{(l+1)/\epsilon_h-2}a\theta\langle \theta,v\rangle}{(1+\lambda|\theta|^{(l+1)/\epsilon_h})^{1+\epsilon_h}} -  \frac{\lambda(l+1) |\overline{\theta}|^{(l+1)/\epsilon_h-2}a\overline{\theta}\langle \overline{\theta},v\rangle}{(1+\lambda|\overline{\theta}|^{(l+1)/\epsilon_h})^{1+\epsilon_h}}   \right|.
\end{align*}
By Assumption \ref{asm:polylip}, we obtain, for any $\theta,\overline{\theta}\in\R^d$, that
\begin{align}\label{eq:nablahlambdalipubt1}
T_1(\theta,\overline{\theta})	&= \max_{v\in\R^d:|v|=1}\left| \frac{ (H(\theta)v-av)(1+\lambda|\overline{\theta}|^{(l+1)/\epsilon_h})^{\epsilon_h} - (H(\overline{\theta})v-av)(1+\lambda|\theta|^{(l+1)/\epsilon_h})^{\epsilon_h}}{(1+\lambda|\theta|^{(l+1)/\epsilon_h})^{\epsilon_h}(1+\lambda|\overline{\theta}|^{(l+1)/\epsilon_h})^{\epsilon_h}}    \right| \nonumber\\
&\leq \max_{v\in\R^d:|v|=1}\left| \frac{ (H(\theta)v-av)((1+\lambda|\overline{\theta}|^{(l+1)/\epsilon_h})^{\epsilon_h} - (1+\lambda|\theta|^{(l+1)/\epsilon_h})^{\epsilon_h})}{(1+\lambda|\theta|^{(l+1)/\epsilon_h})^{\epsilon_h}(1+\lambda|\overline{\theta}|^{(l+1)/\epsilon_h})^{\epsilon_h}}    \right| \nonumber\\
&\quad + \max_{v\in\R^d:|v|=1}\left| \frac{ (H(\theta)v - H(\overline{\theta})v )(1+\lambda|\theta|^{(l+1)/\epsilon_h})^{\epsilon_h}}{(1+\lambda|\theta|^{(l+1)/\epsilon_h})^{\epsilon_h}(1+\lambda|\overline{\theta}|^{(l+1)/\epsilon_h})^{\epsilon_h}}    \right| \nonumber\\
&\leq   2\sqrt{2}(2K_H+a)(l+1)(1+|\theta|+|\overline{\theta}|)^{l}|\theta-\overline{\theta}|  + L(1+|\theta|+|\overline{\theta}|)^{l-1}|\theta-\overline{\theta}| \nonumber\\
&\leq (2\sqrt{2}(2K_H+a)(l+1)+L)(1+|\theta|+|\overline{\theta}|)^{l}|\theta-\overline{\theta}|,
\end{align}
where the second inequality holds due to $ (\|H(\theta)\|+a)(1+\lambda|\theta|^{ (l+1)/\epsilon_h})^{-\epsilon_h}\leq 2(2K_H+a)\lambda^{-\epsilon_h}$, $\theta\in\R^d$, and $(1+\lambda|\overline{\theta}|^{(l+1)/\epsilon_h})^{\epsilon_h} - (1+\lambda|\theta|^{(l+1)/\epsilon_h})^{\epsilon_h}  \leq \sqrt{2}\lambda^{\epsilon_h}(l+1) (1+|\theta|+|\overline{\theta}|)^{l}|\theta-\overline{\theta}|$, $ l\in \N$. Moreover, by Remark \ref{rmk:polyliph}, we have, for any $\theta,\overline{\theta}\in\R^d$, that
\begin{align}\label{eq:nablahlambdalipubt2}
T_2(\theta,\overline{\theta})
&\leq \lambda(l+1) \max_{v\in\R^d:|v|=1}\left\{\left| \frac{ |\theta|^{(l+1)/\epsilon_h-2}h(\theta)\langle \theta,v\rangle \left( (1+\lambda|\overline{\theta}|^{(l+1)/\epsilon_h})^{1+\epsilon_h} -  (1+\lambda|\theta|^{(l+1)/\epsilon_h})^{1+\epsilon_h}\right)}{(1+\lambda|\theta|^{(l+1)/\epsilon_h})^{1+\epsilon_h}(1+\lambda|\overline{\theta}|^{(l+1)/\epsilon_h})^{1+\epsilon_h}}    \right| \right. \nonumber\\
&\qquad \left. +\left| \frac{ \left(|\theta|^{(l+1)/\epsilon_h-2} -  |\overline{\theta}|^{(l+1)/\epsilon_h-2}\right)h(\theta)\langle \theta,v\rangle}{(1+\lambda|\overline{\theta}|^{(l+1)/\epsilon_h})^{1+\epsilon_h}}    \right|  
+\left| \frac{|\overline{\theta}|^{(l+1)/\epsilon_h-2}(h(\theta)-h(\overline{\theta}))\langle \theta,v\rangle}{(1+\lambda|\overline{\theta}|^{(l+1)/\epsilon_h})^{1+\epsilon_h}}    \right|  \right.   \nonumber\\
&\qquad \left. +\left| \frac{|\overline{\theta}|^{(l+1)/\epsilon_h-2}h(\overline{\theta})(\langle \theta,v\rangle -  \langle \overline{\theta},v\rangle)}{(1+\lambda|\overline{\theta}|^{(l+1)/\epsilon_h})^{1+\epsilon_h}}    \right|\right\} \nonumber\\
&\leq  \lambda(l+1) \left(\frac{ |\theta|^{(l+1)/\epsilon_h-1}|h(\theta)|(1+\lambda|\overline{\theta}|^{(l+1)/\epsilon_h}) \left| (1+\lambda|\overline{\theta}|^{(l+1)/\epsilon_h})^{\epsilon_h} -  (1+\lambda|\theta|^{(l+1)/\epsilon_h})^{\epsilon_h}\right|}{(1+\lambda|\theta|^{(l+1)/\epsilon_h})^{1+\epsilon_h}(1+\lambda|\overline{\theta}|^{(l+1)/\epsilon_h})^{1+\epsilon_h}}\right. \nonumber\\
&\qquad \left. +\frac{ |\theta|^{(l+1)/\epsilon_h-1}|h(\theta)|  (1+\lambda|\theta|^{(l+1)/\epsilon_h})^{\epsilon_h}\lambda \left| |\overline{\theta}|^{(l+1)/\epsilon_h} -  |\theta|^{(l+1)/\epsilon_h}\right|}{(1+\lambda|\theta|^{(l+1)/\epsilon_h})^{1+\epsilon_h}(1+\lambda|\overline{\theta}|^{(l+1)/\epsilon_h})^{1+\epsilon_h}} \right) \nonumber\\
&\quad +\epsilon_h^{-1}(l+1)^2\max\{K_h, K_H\}  (1+|\theta|+|\overline{\theta}|)^{(l+1)(1/\epsilon_h+1)-2}|\theta-\overline{\theta}|\nonumber\\
&\leq 2(\sqrt{2}+\epsilon_h^{-1})(l+1)^2\max\{K_h, K_H\}  (1+|\theta|+|\overline{\theta}|)^{(l+1)(1/\epsilon_h+1)-2}|\theta-\overline{\theta}|,
\end{align}
where the second and the last inequalities hold due to $\left||\overline{\theta}|^{2\rho} -|\theta|^{2 \rho}\right|  \leq 2\rho (1+|\theta|+|\overline{\theta}|)^{2\rho-1}|\theta-\overline{\theta}|$, $ \rho\geq 1$ and the fact that $|\theta|^{l+1}|h(\theta)|\leq 2K_h(1+|\theta|^{2(l+1)})$, $\theta\in\R^d$. Similarly, we obtain, for any $\theta,\overline{\theta}\in\R^d$, that
\begin{align}\label{eq:nablahlambdalipubt3}
T_3(\theta,\overline{\theta}) 
&\leq \lambda(l+1) \max_{v\in\R^d:|v|=1}\left\{\left| \frac{|\theta|^{(l+1)/\epsilon_h-2}a\theta\langle \theta,v\rangle \left((1+\lambda|\overline{\theta}|^{(l+1)/\epsilon_h})^{1+\epsilon_h} - (1+\lambda|\theta|^{(l+1)/\epsilon_h})^{1+\epsilon_h}\right)}{(1+\lambda|\theta|^{(l+1)/\epsilon_h})^{1+\epsilon_h}(1+\lambda|\overline{\theta}|^{(l+1)/\epsilon_h})^{1+\epsilon_h}}  \right|\right.\nonumber\\
&\qquad \left. +\left| \frac{\left(|\theta|^{(l+1)/\epsilon_h-2} - |\overline{\theta}|^{(l+1)/\epsilon_h-2}\right)a\theta\langle \theta,v\rangle}{(1+\lambda|\overline{\theta}|^{(l+1)/\epsilon_h})^{1+\epsilon_h}}  \right| 
+\left| \frac{|\overline{\theta}|^{(l+1)/\epsilon_h-2}a(\theta-\overline{\theta})\langle \theta,v\rangle }{(1+\lambda|\overline{\theta}|^{(l+1)/\epsilon_h})^{1+\epsilon_h}}  \right| \right. \nonumber\\
&\qquad \left. + \left| \frac{  |\overline{\theta}|^{(l+1)/\epsilon_h-2}a\overline{\theta}(\langle \theta,v\rangle - \langle \overline{\theta},v\rangle)}{(1+\lambda|\overline{\theta}|^{(l+1)/\epsilon_h})^{1+\epsilon_h}}  \right|\right\}\nonumber\\
&\leq \lambda(l+1) \left(  \frac{|\theta|^{(l+1)/\epsilon_h}a(1+\lambda|\overline{\theta}|^{(l+1)/\epsilon_h}) \left|(1+\lambda|\overline{\theta}|^{(l+1)/\epsilon_h})^{\epsilon_h} - (1+\lambda|\theta|^{(l+1)/\epsilon_h})^{\epsilon_h}\right|}{(1+\lambda|\theta|^{(l+1)/\epsilon_h})^{1+\epsilon_h}(1+\lambda|\overline{\theta}|^{(l+1)/\epsilon_h})^{1+\epsilon_h}}\right. \nonumber\\
&\qquad \left. + \frac{|\theta|^{(l+1)/\epsilon_h}a(1+\lambda|\theta|^{(l+1)/\epsilon_h})^{\epsilon_h}\lambda\left||\overline{\theta}|^{(l+1)/\epsilon_h} - |\theta|^{(l+1)/\epsilon_h}\right|}{(1+\lambda|\theta|^{(l+1)/\epsilon_h})^{1+\epsilon_h}(1+\lambda|\overline{\theta}|^{(l+1)/\epsilon_h})^{1+\epsilon_h}}\right)\nonumber\\
&\quad + a\epsilon_h^{-1}(l+1)^2 (1+|\theta|+|\overline{\theta}|)^{(l+1)/\epsilon_h-1}|\theta-\overline{\theta}|\nonumber\\
&\leq (\sqrt{2}+2\epsilon_h^{-1})a(l+1)^2 (1+|\theta|+|\overline{\theta}|)^{(l+1)/\epsilon_h-1}|\theta-\overline{\theta}|.
\end{align}
Substituting \eqref{eq:nablahlambdalipubt1}, \eqref{eq:nablahlambdalipubt2}, and \eqref{eq:nablahlambdalipubt3} back into \eqref{eq:nablahlambdalipub} yields
\[
\|\nabla h_{\lambda}(\theta) - \nabla h_{\lambda}(\overline{\theta})\| \leq \cL_{\nabla,\epsilon_h} (1+|\theta|+|\overline{\theta}|)^{(l+1)(1/\epsilon_h+1)-2}|\theta-\overline{\theta}|,
\]
where $\cL_{\nabla,\epsilon_h}\coloneqq (10\sqrt{2}+4\epsilon_h^{-1})(l+1)^2\max\{K_H,L,K_h,a\}$.

To prove \ref{lem:hlambdaestimates4}, we use Assumption \ref{asm:polylip} to obtain, for any $\theta\in\R^d$, that
\begin{align*}
|h(\theta)-h_{\lambda}(\theta)|^2 	& = \left|\frac{(h(\theta)-a\theta)\left((1+\lambda|\theta|^{(l+1)/\epsilon_h})^{\epsilon_h}-1\right)}{(1+\lambda|\theta|^{(l+1)/\epsilon_h})^{\epsilon_h}}\right|^2\\
&\leq \lambda^2 (|h(\theta)|+a|\theta|)^2|\theta|^{2 (l+1)/\epsilon_h}\\
&\leq 4 \lambda^2(K_h+a)^2(1+ |\theta|^{2(l+1)(1+1/\epsilon_h)}).
\end{align*}
This completes the proof.
\end{proof}

\begin{proof}[\textbf{Proof of Lemma \ref{lem:2ndpthmmt}}]\label{lem:2ndpthmmtproof}
For any $0<\lambda\leq \lambda_{\max}\leq 1$ with $\lambda_{\max}$  given in \eqref{eq:stepsizemax}, $t\in (n, n+1]$, $n \in \N_0$, we define
\begin{equation}\label{eq:delxinotation} 
\Delta_{n,t}^{\lambda}
 \coloneqq \overline{\theta}^{\lambda}_n-\lambda (t-n)h_{\lambda}(\overline{\theta}^{\lambda}_n) ,\quad
\Xi_{n,t}^{\lambda}
\coloneqq \sqrt{2\lambda\beta^{-1}} ( B^{\lambda}_t-B^{\lambda}_n).
\end{equation}
Thus, by \eqref{eq:ktulaproc}, we have that
\begin{equation}\label{eq:ktulaprocnexp}
\overline{\theta}^{\lambda}_t=\Delta_{n,t}^{\lambda} +\Xi_{n,t}^{\lambda}.
\end{equation}

To prove \ref{lem:2ndpthmmti}, we use \eqref{eq:ktulaprocnexp} to obtain that
\begin{equation}\label{eq:2ndmmtexp}
\E\left[\left.|\overline{\theta}^{\lambda}_t|^2\right|\overline{\theta}^{\lambda}_n \right]  = |\Delta_{n,t}^{\lambda}|^2+2\lambda\beta^{-1}(t-n)d,
\end{equation}
where the equality holds due to the fact that $\E\left[\left.\langle \Delta_{n,t}^{\lambda}, \Xi_{n,t}^{\lambda}\rangle \right|\overline{\theta}^{\lambda}_n \right] =0$. Then, we proceed to bound the first term on the RHS of \eqref{eq:2ndmmtexp}. By using \eqref{eq:delxinotation}, Lemma \ref{lem:hlambdaestimates}-\ref{lem:hlambdaestimates1} and \ref{lem:hlambdaestimates}-\ref{lem:hlambdaestimates2}, we obtain that
\begin{align}\label{eq:delub}
 |\Delta_{n,t}^{\lambda}|^2 & =  |\overline{\theta}^{\lambda}_n|^2-2\lambda (t-n)\left\langle h_{\lambda}(\overline{\theta}^{\lambda}_n),\overline{\theta}^{\lambda}_n \right\rangle +\lambda^2 (t-n)^2|h_{\lambda}(\overline{\theta}^{\lambda}_n) |^2 \nonumber\\
 &\leq |\overline{\theta}^{\lambda}_n|^2-2a\lambda (t-n)|\overline{\theta}^{\lambda}_n|^2+2\lambda (t-n)b +\lambda^2 (t-n)  (8a^2|\overline{\theta}^{\lambda}_n|^2+8K_h^2\lambda^{-1})\nonumber\\
 &\leq (1-a\lambda (t-n)) |\overline{\theta}^{\lambda}_n|^2+\lambda (t-n)( 2b+8K_h^2),
\end{align}
where the last inequality holds due to $0<\lambda\leq \lambda_{\max}\leq 1/(8a)$ implying
\[
- a\lambda (t-n)|\overline{\theta}^{\lambda}_n|^2+\lambda^2 (t-n)   8a^2|\overline{\theta}^{\lambda}_n|^2 \leq 0.
\]
Substituting \eqref{eq:delub} into \eqref{eq:2ndmmtexp} yields
\[
\E\left[\left.|\overline{\theta}^{\lambda}_t|^2\right|\overline{\theta}^{\lambda}_n \right]  \leq  (1-a\lambda (t-n)) |\overline{\theta}^{\lambda}_n|^2+\lambda (t-n)\cc_0,
\]
where $\cc_0\coloneq 2b+8K_h^2+2\beta^{-1}d$. This further implies, for $t\in (n, n+1]$, $n \in \N_0$, $0<\lambda\leq \lambda_{\max}\leq 1$, that, 
\begin{align*}
\E\left[|\overline{\theta}^{\lambda}_t|^2  \right] 	&\leq (1-a\lambda (t-n))\E\left[|\overline{\theta}^{\lambda}_n|^2\right]+\lambda (t-n)\cc_0\\
&\leq (1-a\lambda (t-n))(1-a\lambda)\E\left[|\overline{\theta}^{\lambda}_{n-1}|^2\right]+(1-a\lambda (t-n))\lambda\cc_0+\lambda (t-n)\cc_0\\
&\leq (1-a\lambda (t-n))(1-a\lambda)^2\E\left[|\overline{\theta}^{\lambda}_{n-2}|^2\right]+ \lambda((1-a\lambda)+1)\cc_0+\cc_0\\
&\leq \dots\\
&\leq (1-a\lambda (t-n))(1-a\lambda)^n\E\left[|\theta_0|^2\right]+\cc_0(1+1/a).
\end{align*}

Next, we prove the results in \ref{lem:2ndpthmmtii}. For any $p\in [2, \infty)\cap {\N}$, $0<\lambda\leq \lambda_{\max}\leq 1$ with $\lambda_{\max}$  given in \eqref{eq:stepsizemax}, $t\in (n, n+1]$, $n \in \N_0$, we use \eqref{eq:ktulaprocnexp} and follow the same arguments as in the proof of \cite[Lemma 4.2-(ii)]{lim2021nonasymptotic} up to \cite[Eq. (134)]{lim2021nonasymptotic} to obtain
\begin{align}
\begin{split}\label{eq:2pthmmtexp}
\E\left[\left.|\overline{\theta}^{\lambda}_t|^{2p}\right|\overline{\theta}^{\lambda}_n \right] 
&\leq |\Delta_{n,t}^{\lambda}|^{2p} +2^{2p-2}p(2p-1)\lambda(t-n)d\beta^{-1}|\Delta_{n,t}^{\lambda}|^{2p-2}   \\
&\quad + 2^{2p-4}(2p(2p-1))^{p+1}(d\beta^{-1}\lambda(t-n))^p.
\end{split}
\end{align}
By using \eqref{eq:delub}, we obtain that
\begin{align}\label{eq:2pthmmtexpubdelta2p}
|\Delta_{n,t}^{\lambda}|^{2p} 
&\leq \left((1-a\lambda (t-n)) |\overline{\theta}^{\lambda}_n|^2+\lambda (t-n)( 2b+8K_h^2)\right)^p\nonumber\\
&\leq (1+a\lambda (t-n)/2)^{p-1}(1-a\lambda (t-n))^p |\overline{\theta}^{\lambda}_n|^{2p}\nonumber\\
&\quad  +(1+2/(a\lambda (t-n)))^{p-1} (\lambda (t-n))^p( 2b+8K_h^2)^p\nonumber\\
&\leq (1-a\lambda (t-n)/2)^{p-1}(1-a\lambda (t-n))  |\overline{\theta}^{\lambda}_n|^{2p}  +\lambda (t-n)(1+2/a)^{p-1}  ( 2b+8K_h^2)^p,
\end{align}
where the last inequality holds due to $(\rho+\iota)^p\leq (1+\varepsilon)^{p-1}\rho^p+(1+\varepsilon^{-1})^{p-1}\iota^p$, $\rho,\iota \geq 0$, $\varepsilon>0$ with $\varepsilon =a \lambda(t-n)/2 $. Moreover, \eqref{eq:2pthmmtexpubdelta2p} implies, for any $p\in [3, \infty)\cap {\N}$, that
\begin{equation}\label{eq:2pthmmtexpubdelta2p-2}
|\Delta_{n,t}^{\lambda}|^{2p-2} \leq (1-a\lambda (t-n)/2)^{p-2}(1-a\lambda (t-n))  |\overline{\theta}^{\lambda}_n|^{2p-2}  +\lambda (t-n)(1+2/a)^{p-2}  ( 2b+8K_h^2)^{p-1}.
\end{equation}
This, together with \eqref{eq:delub}, further implies \eqref{eq:2pthmmtexpubdelta2p-2} holds for $p\in [2, \infty)\cap {\N}$. Next, we substitute \eqref{eq:2pthmmtexpubdelta2p} and \eqref{eq:2pthmmtexpubdelta2p-2} into \eqref{eq:2pthmmtexp} to obtain
\begin{align}
\E\left[\left.|\overline{\theta}^{\lambda}_t|^{2p}\right|\overline{\theta}^{\lambda}_n \right] 
&\leq (1-a\lambda (t-n)/2)^{p-1}(1-a\lambda (t-n))  |\overline{\theta}^{\lambda}_n|^{2p} \nonumber \\
&\quad +\lambda(t-n)2^{2p-2}p(2p-1)d\beta^{-1}(1-a\lambda (t-n)/2)^{p-2}(1-a\lambda (t-n))|\overline{\theta}^{\lambda}_n|^{2p-2}\nonumber \\
&\quad +\lambda (t-n)(1+2/a)^{p-1} (1+ 2b+8K_h^2)^p(1+2^{2p-1}p(2p-1) d\beta^{-1})\nonumber\\
&\quad + \lambda(t-n)2^{2p-4}(2p(2p-1))^{p+1}(d\beta^{-1} )^p\nonumber\\
\begin{split}\label{eq:2pthmmtexpub}
&\leq (1-a\lambda (t-n))  |\overline{\theta}^{\lambda}_n|^{2p}+ \lambda(t-n)\overline{\cc}_p \\
&\quad -(a\lambda (t-n)/4) (1-a\lambda (t-n)/2)^{p-2}(1-a\lambda (t-n))  |\overline{\theta}^{\lambda}_n|^{2p} \\
&\quad +\lambda(t-n)2^{2p-2}p(2p-1)d\beta^{-1}(1-a\lambda (t-n)/2)^{p-2}(1-a\lambda (t-n))|\overline{\theta}^{\lambda}_n|^{2p-2},
\end{split}
\end{align}
where $\overline{\cc}_p \coloneqq (1+2/a)^{p-1} (1+ 2b+8K_h^2)^p(1+2^{2p-1}p(2p-1) d\beta^{-1}) +2^{2p-4}(2p(2p-1))^{p+1}(d\beta^{-1} )^p$. We note that, for $|\theta|>(2^{2p}p(2p-1)d(a\beta)^{-1})^{1/2}\eqqcolon \cM(p)$,
\[
-(a/4)|\theta|^{2p}  +  2^{2p-2}p(2p-1)d\beta^{-1}|\theta|^{2p-2}\leq 0.
\]
Thus, by denoting $\cS_{n,\cM(p)}:=\{\omega \in \Omega:|\overline{\theta}^{\lambda}_n(\omega)|>\cM(p)\}$ and by using \eqref{eq:2pthmmtexpub}, we obtain
\begin{align}\label{eq:2pthmmtexpubS}
\E\left[\left.|\overline{\theta}^{\lambda}_t|^{2p}\1_{\cS_{n,\cM(p)}}\right|\overline{\theta}^{\lambda}_n \right] 
&\leq (1-a\lambda (t-n))  |\overline{\theta}^{\lambda}_n|^{2p}\1_{\cS_{n,\cM(p)}}+ \lambda(t-n)\cc_p \1_{\cS_{n,\cM(p)}},
\end{align}
and similarly
\begin{align}\label{eq:2pthmmtexpubSc}
\E\left[\left.|\overline{\theta}^{\lambda}_t|^{2p}\1_{\cS_{n,\cM(p)}^{\cc}}\right|\overline{\theta}^{\lambda}_n \right] 
&\leq (1-a\lambda (t-n))  |\overline{\theta}^{\lambda}_n|^{2p}\1_{\cS_{n,\cM(p)}^{\cc}}+ \lambda(t-n)\cc_p \1_{\cS_{n,\cM(p)}^{\cc}},
\end{align}
where $\cc_p \coloneqq \overline{\cc}_p+a(\cM(p))^{2p}$. Finally, combining \eqref{eq:2pthmmtexpubS} and \eqref{eq:2pthmmtexpubSc} yields the desired result, i.e., for any $p\in [2, \infty)\cap {\N}$, $0<\lambda\leq \lambda_{\max}\leq 1$, $t\in (n, n+1]$, $n \in \N_0$,
\[
\E\left[\left.|\overline{\theta}^{\lambda}_t|^{2p}\right|\overline{\theta}^{\lambda}_n \right] 
\leq (1-a\lambda (t-n))  |\overline{\theta}^{\lambda}_n|^{2p} + \lambda(t-n)\cc_p ,
\]
which implies
\[
\E\left[|\overline{\theta}^{\lambda}_t|^{2p}  \right] 
\leq (1-a\lambda (t-n))  (1-a\lambda)^n\E\left[|\theta_0|^{2p} \right]+ \cc_p(1+1/a),
\]
where
\begin{align}
\begin{split}\label{eq:2pthmmtexpconst} 
\cc_p&\coloneqq (1+2/a)^{p-1} (1+ 2b+8K_h^2)^p(1+2^{2p-1}p(2p-1) d\beta^{-1}) \\
&\qquad +2^{2p-4}(2p(2p-1))^{p+1}(\max\{1,d\beta^{-1} \})^p+a(\max\{1,2^{2p}p(2p-1)d(a\beta)^{-1}\})^{p}.
\end{split}
\end{align}
This completes the proof.
\end{proof}

\begin{proof}[\textbf{Proof of Lemma \ref{lem:ktuladensitygrowth}}]\label{lem:ktuladensitygrowthproof}
First, we note that the result holds for $n=0$ under Assumption \ref{asm:initialc}.

Then, for any $0<\lambda\leq \lambda_{\max}\leq 1$ with $\lambda_{\max}$  given in \eqref{eq:stepsizemax}, $t\in (n, n+1]$, $n \in \N_0$, by using \eqref{eq:delxinotation}, \eqref{eq:ktulaprocnexp}, the fact that $\Delta_{n,t}^{\lambda}$ and $\Xi_{n,t}^{\lambda}$ are independent, and that $\Xi_{n,t}^{\lambda}$ is normally distributed with mean 0 and covariance matrix $2\lambda(t-n)\beta^{-1}\cI_d$, we apply \cite[Proposition 2]{polyanskiy2016wasserstein} with \[
\sigma^2 \curvearrowleft 2\lambda(t-n)\beta^{-1}, \quad \E[|B|] \curvearrowleft \left(\E\left[|\overline{\theta}^{\lambda}_t|^2\right]\right)^{1/2}
\]
to obtain that, for any $n\in\N_0$,
\[
|\nabla \log \pi^{\lambda}_{n+1}(\theta)| \leq \lambda^{-1}(\widetilde{\cc}_1|\theta|+\widetilde{\cc}_2),
\]
where $\widetilde{\cc}_1 \coloneqq 2\beta $ and $\widetilde{\cc}_2 \coloneqq 2\beta (\E\left[|\theta_0|^2\right] +\cc_0(1+1/a))^{1/2}$ with $\cc_0$ given in Lemma \ref{lem:2ndpthmmt}-\ref{lem:2ndpthmmti}.
\end{proof}

\begin{proof}[\textbf{Proof of Lemma \ref{lem:decomptermsF1}}]\label{lem:decomptermsF1proof}
By using the expression of $F_1$ in \eqref{eq:decompterms}, we have, for any $\theta\in\R^d$, $0<\lambda\leq \lambda_{\max}$, $n\in\N_0$, $t\in(n ,n+1 ]$, that
\begin{align*}
F_1( \theta)		
&=\int_{\R^d} \left(\cI_d-\lambda(t-n ) \nabla h_{\lambda}(y)\right)\left(y-\theta-\lambda(t-n )h_{\lambda}(y)\right) \pi^{\lambda}_{n|t}(y|\theta)\, \rmd y \\
&=\int_{\R^d} \left(\cI_d-\lambda(t-n ) \nabla h_{\lambda}(y)\right)\left(y-\theta-\lambda(t-n )h_{\lambda}(y)\right) \pi^{\lambda}_{t|n}(\theta|y)\frac{\pi^{\lambda}_n(y)}{\pi^{\lambda}_t(\theta)}\, \rmd y  ,
\end{align*}
where $\pi^{\lambda}_{n|t}$ denotes the conditional density of $\overline{\theta}^{\lambda}_n$ given $\overline{\theta}^{\lambda}_t$ while $\pi^{\lambda}_{t|n}$ denotes the conditional density of $\overline{\theta}^{\lambda}_t$ given $\overline{\theta}^{\lambda}_n$, and where the second equality hold due to Bayes' rule. By the fact that 
\[
\pi^{\lambda}_{t|n}(\theta|y) = \phi\left(\frac{\theta -y+\lambda(t-n)h_{\lambda}(y)}{\sqrt{2\beta^{-1}\lambda(t-n)}}\right)
\]
for any $\theta,y\in\R^d$ with $\phi$ denoting the $d$-dimensional standard normal density, the above equality can be rewritten as
\begin{align}\label{eq:F1exp}
F_1( \theta)		
&=\int_{\R^d} \left(\cI_d-\lambda(t-n ) \nabla h_{\lambda}(y)\right)\left(y-\theta-\lambda(t-n )h_{\lambda}(y)\right) \phi\left(\frac{\theta -y+\lambda(t-n)h_{\lambda}(y)}{\sqrt{2\beta^{-1}\lambda(t-n)}}\right)\frac{\pi^{\lambda}_n(y)}{\pi^{\lambda}_t(\theta)}\, \rmd y  \nonumber\\
&= -2\beta^{-1}\lambda(t-n ) \int_{\R^d} \nabla_y \phi\left(\frac{\theta -y+\lambda(t-n)h_{\lambda}(y)}{\sqrt{2\beta^{-1}\lambda(t-n)}}\right)\frac{\pi^{\lambda}_n(y)}{\pi^{\lambda}_t(\theta)}\, \rmd y \nonumber\\
& = 2\beta^{-1}\lambda(t-n ) \int_{\R^d}  \phi\left(\frac{\theta -y+\lambda(t-n)h_{\lambda}(y)}{\sqrt{2\beta^{-1}\lambda(t-n)}}\right)\frac{\nabla \pi^{\lambda}_n(y)}{\pi^{\lambda}_t(\theta)}\, \rmd y\nonumber\\
& =2\beta^{-1} \lambda(t-n ) \int_{\R^d} \pi^{\lambda}_{t|n}(\theta|y)\frac{\pi^{\lambda}_n(y)  \nabla\log \pi^{\lambda}_n(y)}{\pi^{\lambda}_t(\theta)}\, \rmd y \nonumber\\
& =2\beta^{-1}\lambda(t-n )\E\left[\left. \nabla \log \pi^{\lambda}_n(\overline{\theta}^{\lambda}_n) \right|\overline{\theta}^{\lambda}_t=\theta\right],
\end{align}
where the second equality holds due to the chain rule and the fact that $\nabla \phi(y)=-y\phi(y)$, $y\in\R^d$, while the third equality is obtained by applying integration by parts \cite[Appendix E]{mou2022improvedsupp}. By using \eqref{eq:F1exp}, Lemma~\ref{lem:ktuladensitygrowth}, and by applying H\"{o}lder's inequality twice, we obtain, for any $0<\lambda\leq \lambda_{\max}$, $n\in\N_0$, $t\in(n ,n+1 ]$, that
\begin{align}\label{eq:F1ub}
&\E \left[\left| \nabla h_{\lambda}(\overline{\theta}^{\lambda}_t) F_1(\overline{\theta}^{\lambda}_t)\right|^2\right] \nonumber\\
& \leq 4\beta^{-2} \lambda^2\E \left[\left| \nabla h_{\lambda}(\overline{\theta}^{\lambda}_t) \nabla \log \pi^{\lambda}_n(\overline{\theta}^{\lambda}_n)\right|^2\right] \nonumber\\
& \leq 4\beta^{-2}\lambda^2\E \left[\| \nabla h_{\lambda}(\overline{\theta}^{\lambda}_t)\|^2| \nabla \log \pi^{\lambda}_n(\overline{\theta}^{\lambda}_n)|^{2-\epsilon}\lambda^{-\epsilon}(\widetilde{\cc}_1|\overline{\theta}^{\lambda}_n|+\widetilde{\cc}_2)^{\epsilon}\right] \nonumber\\
& \leq 4\beta^{-2} \lambda^{2-\epsilon}\left(\E\left[| \nabla \log \pi^{\lambda}_n(\overline{\theta}^{\lambda}_n)|^2\right]\right)^{1-\epsilon/2}\left(\E\left[\| \nabla h_{\lambda}(\overline{\theta}^{\lambda}_t)\|^{4/\epsilon}(\widetilde{\cc}_1|\overline{\theta}^{\lambda}_n|+\widetilde{\cc}_2)^2\right]\right)^{\epsilon/2}\nonumber\\
&\leq 4\beta^{-2}\lambda^{2-\epsilon} J_n^{1-\epsilon/2}\left(\E\left[\| \nabla h_{\lambda}(\overline{\theta}^{\lambda}_t)\|^{8/\epsilon}\right]\right)^{\epsilon/4}\left(\E\left[(\widetilde{\cc}_1|\overline{\theta}^{\lambda}_n|+\widetilde{\cc}_2)^4\right]\right)^{\epsilon/4},
\end{align}
where $\epsilon>0$. To establish an upper bound for the first expectation on the RHS of \eqref{eq:F1ub}, we use Lemma~\ref{lem:hlambdaestimates}-\ref{lem:hlambdaestimates3} to obtain, for any $\theta\in\R^d$, that
\[
\|\nabla h_{\lambda}(\theta)\|\leq \cK_{\nabla,\epsilon_h}(1+|\theta|)^{(l+1)/\epsilon_h+l},
\]
where $\cK_{\nabla,\epsilon_h}\coloneqq (10\sqrt{2}+4\epsilon_h^{-1})(l+1)^2\max\{K_H,L,K_h,a,\|\nabla h_{\lambda}(0)\|,1\}$. This, together with Lemma~\ref{lem:2ndpthmmt}, implies, for any $t\geq 0$, that
\begin{align}\label{eq:nablahlbdaue}
\begin{split}
\left(\E\left[\| \nabla h_{\lambda}(\overline{\theta}^{\lambda}_t)\|^{8/\epsilon}\right]\right)^{\epsilon/4} 
&\leq 2^{2((l+1)/\epsilon_h+l)+\epsilon/2}\cK_{\nabla,\epsilon_h}^2\cc_{\lceil4((l+1)/\epsilon_h+l)/\epsilon\rceil}^{\epsilon/4}(1+1/a)^{\epsilon/4}\\
&\qquad \times \left(1+\E\left[|\theta_0|^{2\lceil 4((l+1)/\epsilon_h+l)/\epsilon\rceil}\right]\right)^{\epsilon/4}.
\end{split}
\end{align}
Moreover, to upper bound the second expectation on the RHS of \eqref{eq:F1ub}, we apply Lemma \ref{lem:2ndpthmmt} and the expressions of $\widetilde{\cc}_1$ and $\widetilde{\cc}_2$ given in Lemma \ref{lem:ktuladensitygrowth} to obtain that
\[
\left(\E\left[(\widetilde{\cc}_1|\overline{\theta}^{\lambda}_n|+\widetilde{\cc}_2)^4\right]\right)^{\epsilon/4} \leq 2^{9\epsilon/4}\beta^{\epsilon}\cc_2^{\epsilon/4}(1+1/a)^{\epsilon/2}\left(1+\E\left[|\theta_0|^4\right]\right)^{\epsilon/4}.
\]
Hence, we obtain, for any $\epsilon>0$, $0<\lambda\leq \lambda_{\max}$, $n\in\N_0$, $t\in(n ,n+1 ]$, that
\[
\E \left[\left| \nabla h_{\lambda}(\overline{\theta}^{\lambda}_t) F_1(\overline{\theta}^{\lambda}_t)\right|^2\right]
\leq \cC_{D,\epsilon,\epsilon_h}\lambda^{2-\epsilon} J_n^{1-\epsilon/2},
\]
where
\begin{align}\label{eq:cDepsilonexp}
\begin{split}
\cC_{D,\epsilon,\epsilon_h}
&\coloneqq 2^{2(l+1)(1/\epsilon_h+1)+13\epsilon/4}\cK_{\nabla,\epsilon_h}^2\beta^{\epsilon-2}(1+1/a)^{3\epsilon/4}\cc_2^{\epsilon/4}\cc_{\lceil 4((l+1)/\epsilon_h+l)/\epsilon\rceil}^{\epsilon/4}\\
&\qquad \times\left(1+\E\left[|\theta_0|^{\max\{2\lceil 4((l+1)/\epsilon_h+l)/\epsilon\rceil,4\}}\right]\right)^{\epsilon/2}
\end{split}
\end{align}
with $\cK_{\nabla,\epsilon_h}\coloneqq (10\sqrt{2}+4\epsilon_h^{-1})(l+1)^2\max\{K_H,L,K_h,a,\|\nabla h_{\lambda}(0)\|,1\}$ and $\cc_p$, $p\in [2, \infty)\cap {\N}$, given in \eqref{eq:2pthmmtexpconst}. 
We note that $\cC_{D,\epsilon,\epsilon_h}>0$ is a finite constant due to Assumption \ref{asm:initialc}.
\end{proof}

\begin{proof}[\textbf{Proof of Lemma \ref{lem:ubforfisherinfo}}]\label{lem:ubforfisherinfoproof}
For any $0<\lambda\leq \lambda_{\max}$ with $\lambda_{\max}$  given in \eqref{eq:stepsizemax}, define $f\colon \R^d \to \R^d$ by 
\begin{equation}\label{eq:deff}
f(\theta)\coloneqq  \theta-\lambda h_{\lambda}(\theta).
\end{equation}
We note that, by Remark \ref{lem:hlambdaestimates}-\ref{lem:hlambdaestimates3}, $f$ is a bi-Lipschitz mapping, i.e., for any $\theta,\overline{\theta}\in\R^d$,
\begin{equation}\label{eq:fbilip}
(1-\cL_0 \lambda^{1-\epsilon_h})|\theta - \overline{\theta}| \leq \|f(\theta) - f(\overline{\theta})\| \leq (1+\cL_0 \lambda^{1-\epsilon_h})|\theta - \overline{\theta}|
\end{equation}
with $\cL_0 \coloneqq 2a+4K_H+(l+1)(2K_h+a)$. Denote by $\psi_n^{\lambda}$ the density of $f(\overline{\theta}^{\lambda}_n)$, $n \in \N_0$, and $\phi^{\lambda}_{\beta}$ the density of the normal distribution with mean 0 and variance $2\lambda\beta^{-1}\cI_d$. Recall the expression of $(\overline{\theta}^{\lambda}_t)_{t \geq 0 }$ in \eqref{eq:ktulaproc}, we have that $\pi_{n+1}^{\lambda} =\psi_n^{\lambda}* \phi^{\lambda}_{\beta}$. 

Our aim is to provide a uniform upper bound for $J_n$, $n \in \N_0$. To this end, we use the definition of $J_n$ in \eqref{eq:fisherinfo} and \cite[Proposition 3]{rioul2010information} to obtain, for any $0<\lambda\leq \lambda_{\max}$, $n\in\N_0$, that
\begin{equation}\label{eq:fisherinfoconvoineq}
\frac{1}{J_{n+1}} \geq \frac{1}{J(\psi_n^{\lambda})} +\frac{1}{J(\phi^{\lambda}_{\beta})} \geq \frac{1}{J(\psi_n^{\lambda})} +\frac{2\lambda}{d\beta}.
\end{equation}
To establish an upper bound for $J(\psi_n^{\lambda})$, we use transformation of variables, i.e., for any $Z =f(\overline{\theta}^{\lambda}_n)$,
\[
\psi_n^{\lambda}(z) = \frac{\pi_n^{\lambda}(f^{-1}(z))}{\rmdet(\nabla f(f^{-1}(z)))},
\]
to obtain that
\begin{align}\label{eq:fisherinfopsiub}
J(\psi_n^{\lambda}) 
&= \int_{\R^d}|\nabla \log \psi_n^{\lambda}(z)|^2\psi_n^{\lambda}(z)\,\rmd z \nonumber\\
&= \int_{\R^d}|\nabla \log \psi_n^{\lambda}(f(\theta))|^2\psi_n^{\lambda}(f(\theta))\rmdet(\nabla f(\theta)) \,\rmd \theta \nonumber\\
&= \int_{\R^d}|(\nabla f(\theta))^{-1}\nabla_{\theta} \log \psi_n^{\lambda}(f(\theta))|^2\pi_n^{\lambda}(\theta) \,\rmd \theta \nonumber\\
&= \int_{\R^d}\left|(\nabla f(\theta))^{-1}\left(\nabla_{\theta} \log \pi_n^{\lambda}(\theta) - \nabla_{\theta} \log \rmdet(\nabla f(\theta))    \right)\right|^2\pi_n^{\lambda}(\theta) \,\rmd \theta \nonumber\\
\begin{split}
&\leq (1+\cL_0\lambda^{1-\epsilon_h})\int_{\R^d}\left|(\nabla f(\theta))^{-1} \nabla_{\theta} \log \pi_n^{\lambda}(\theta)   \right|^2\pi_n^{\lambda}(\theta) \,\rmd \theta \\
&\quad+ (1+1/(\cL_0\lambda^{1-\epsilon_h}))\int_{\R^d}\left|(\nabla f(\theta))^{-1}  \nabla_{\theta} \log \rmdet(\nabla f(\theta))  \right|^2\pi_n^{\lambda}(\theta) \,\rmd \theta,
\end{split}
\end{align}
the last inequality holds due to $(\rho+\iota)^2\leq (1+\varepsilon) \rho^2+(1+\varepsilon^{-1}) \iota^2$, $\rho,\iota \geq 0$, $\varepsilon>0$ with $\varepsilon =\cL_0\lambda^{1-\epsilon_h} $. Then, by using \eqref{eq:fbilip} and the fact that $0<\lambda\leq \lambda_{\max}\leq 1/(6\cL_0)^{1/(1-\epsilon_h)}$, we have that
\begin{equation}\label{eq:fisherinfopsiub1}
(1+\cL_0\lambda^{1-\epsilon_h})\|(\nabla f(\theta))^{-1}\|^2\leq (1+\cL_0\lambda^{1-\epsilon_h})/(1-\cL_0\lambda^{1-\epsilon_h})^2\leq (1+(6/5)\cL_0\lambda^{1-\epsilon_h})^3\leq 1+5\cL_0\lambda^{1-\epsilon_h}.
\end{equation}
Moreover, by Remark \ref{lem:hlambdaestimates}-\ref{lem:hlambdaestimates3}, we obtain, for any $i,j=1,\dots,d$, $\theta\in\R^d$, that
\begin{align*}
|\langle \nabla(\nabla h_{\lambda})^{(i,j)}(\theta),v\rangle|
& =\left| \lim_{h\to 0} \frac{(\nabla h_{\lambda})^{(i,j)}(\theta+hv) - (\nabla h_{\lambda})^{(i,j)}(\theta)}{h}  \right|\\
&\leq \lim_{h\to 0}\cL_{\nabla,\epsilon_h}(1+|\theta+hv|+|\theta|)^{(l+1)(1/\epsilon_h+1)-2} |v|\\
&\leq 2^{(l+1)(1/\epsilon_h+1)-2}\cL_{\nabla,\epsilon_h}(1+|\theta|)^{(l+1)(1/\epsilon_h+1)-2} |v|,
\end{align*}
which implies
\[
|\nabla(\nabla h_{\lambda})^{(i,j)}(\theta)| \leq 2^{(l+1)(1/\epsilon_h+1)-2}\cL_{\nabla,\epsilon_h}(1+|\theta|)^{(l+1)(1/\epsilon_h+1)-2}.
\]
This, \eqref{eq:fbilip}, the fact that $1/(1-\cL_0\lambda^{1-\epsilon_h})\leq 2$, and Lemma \ref{lem:2ndpthmmt} allow us to further deduce that
\begin{align}\label{eq:fisherinfopsiub2}
&\int_{\R^d}\left|(\nabla f(\theta))^{-1}  \nabla_{\theta} \log \rmdet(\nabla f(\theta))  \right|^2\pi_n^{\lambda}(\theta) \,\rmd \theta \nonumber\\
&=\int_{\R^d}\left|(\nabla f(\theta))^{-2}  \nabla \cdot (\cI_d - \lambda \nabla h_{\lambda}(\theta))   \right|^2\pi_n^{\lambda}(\theta) \,\rmd \theta \nonumber\\
&\leq 2^{2(l+1)(1/\epsilon_h+1)}\lambda^2d^2\cL_{\nabla,\epsilon_h}^2\E\left[(1+|\overline{\theta}^{\lambda}_n|)^{2\lceil (l+1)(1/\epsilon_h+1)-2\rceil}\right]\nonumber\\
&\leq 2^{4(l+1)(1/\epsilon_h+1)-2}\lambda^2d^2\cL_{\nabla,\epsilon_h}^2\cc_{\lceil (l+1)(1/\epsilon_h+1)-2\rceil}(1+1/a)\left(1+\E\left[|\theta_0|^{2\lceil (l+1)(1/\epsilon_h+1)-2\rceil}\right]\right).
\end{align}
Substituting \eqref{eq:fisherinfopsiub1} and \eqref{eq:fisherinfopsiub2} into \eqref{eq:fisherinfopsiub} and noticing $\cL_0\lambda^{1-\epsilon_h}\leq 1$ yield
\[
J(\psi_n^{\lambda}) \leq (1+5\cL_0\lambda^{1-\epsilon_h})J_n+\cC_{\psi,\epsilon_h}\lambda^{1+\epsilon_h}/\cL_0,
\]
where $\cC_{\psi,\epsilon_h} \coloneqq 2^{4(l+1)(1/\epsilon_h+1)-1} d^2\cL_{\nabla,\epsilon_h}^2\cc_{\lceil (l+1)(1/\epsilon_h+1)-2\rceil}(1+1/a)\left(1+\E\left[|\theta_0|^{2\lceil (l+1)(1/\epsilon_h+1)-2\rceil}\right]\right)$. Hence, by plugging the above result back into \eqref{eq:fisherinfoconvoineq}, we obtain, for any $0<\lambda\leq \lambda_{\max}$, $n\in\N_0$, that
\[
\frac{1}{J_{n+1}} \geq \frac{1}{(1+5\cL_0\lambda^{1-\epsilon_h})J_n+\cC_{\psi,\epsilon_h}\lambda^{1+\epsilon_h}/\cL_0} +\frac{2\lambda}{d\beta}.
\]
We note that the above lower bound can be simplified as
\begin{equation}\label{eq:Jnrecursion}
\frac{1}{J_{n+1}} \geq \min\left\{\frac{\cL_0^2}{2\cC_{\psi,\epsilon_h}\lambda^{2\epsilon_h}}, \frac{(1-6\cL_0\lambda^{1-\epsilon_h})}{J_n } +\frac{2\lambda}{d\beta}\right\}.
\end{equation}
Indeed, if $J_n \geq \cC_{\psi,\epsilon_h}\lambda^{2\epsilon_h}/\cL_0^2$, then we have that
\[
\frac{1}{J_{n+1}} \geq \frac{1}{(1+6\cL_0\lambda^{1-\epsilon_h})J_n} +\frac{2\lambda}{d\beta}\geq \frac{1-6\cL_0\lambda^{1-\epsilon_h}}{J_n} +\frac{2\lambda}{d\beta},
\]
and if $J_n \leq \cC_{\psi,\epsilon_h}\lambda^{2\epsilon_h}/\cL_0^2$, then, by $0<\lambda\leq \lambda_{\max}\leq 1/(6\cL_0)^{1/(1-\epsilon_h)}$, it is clear that
\[
\frac{1}{J_{n+1}} \geq \frac{1}{(1+5\cL_0\lambda^{1-\epsilon_h})\cC_{\psi,\epsilon_h}\lambda^{2\epsilon_h}/\cL_0^2+\cC_{\psi,\epsilon_h}\lambda^{2\epsilon_h}/(6\cL_0^2)} \geq \frac{\cL_0^2}{2\cC_{\psi,\epsilon_h}\lambda^{2\epsilon_h}}.
\]
Finally, by applying \cite[Lemma 7]{mou2022improved} to \eqref{eq:Jnrecursion} with
\[
u_k \curvearrowleft 1/J_n, \quad \lambda_1 \curvearrowleft \cL_0^2/(2\cC_{\psi,\epsilon_h}\lambda^{2\epsilon_h}), \quad \lambda_2 \curvearrowleft \lambda^{\epsilon_h}/(3d\beta\cL_0), \quad \gamma \curvearrowleft 6\cL_0\lambda^{1-\epsilon_h},
\]
we obtain, for any $0<\lambda\leq \lambda_{\max}$, $n\in\N_0$, that
\[
1/J_n\geq \min\{1/J_0, \cL_0^2/(4\cC_{\psi,\epsilon_h}\lambda^{2\epsilon_h}),\lambda^{\epsilon_h}/(6d\beta\cL_0)\},
\]
which implies
\[
J_n \leq \max\{J_0, 4\cC_{\psi,\epsilon_h}\lambda^{2\epsilon_h}/\cL_0^2, 6d\beta\cL_0\lambda^{-\epsilon_h} \}\leq \cC_{J,\epsilon_h}	 \lambda^{-\epsilon_h},
\]
where 
\begin{align}\label{eq:cJexp}
\begin{split}
\cC_{J,\epsilon_h}		& \coloneqq J_0+4\cC_{\psi,\epsilon_h}/\cL_0^2+ 6d\beta\cL_0,\\
\cC_{\psi,\epsilon_h}	& \coloneqq 2^{4(l+1)(1/\epsilon_h+1)-1} d^2\cL_{\nabla,\epsilon_h}^2\cc_{\lceil (l+1)(1/\epsilon_h+1)-2\rceil}(1+1/a)\left(1+\E\left[|\theta_0|^{2\lceil (l+1)(1/\epsilon_h+1)-2\rceil}\right]\right),\\
\cL_0				& \coloneqq 2a+4K_H+(l+1)(2K_h+a),\\
\cL_{\nabla,\epsilon_h}	& \coloneqq  (10\sqrt{2}+4\epsilon_h^{-1})(l+1)^2\max\{K_H,L,K_h,a\},
\end{split}
\end{align}
and where $\cc_p$, $p\in [2, \infty)\cap {\N}$, is given in \eqref{eq:2pthmmtexpconst}.
\end{proof}

\begin{proof}[\textbf{Proof of Lemma \ref{lem:decomptermsF2n3}}]\label{lem:decomptermsF2n3proof}
Recall the expression of $F_2$ in \eqref{eq:decompterms} and the definition of $(\overline{\theta}^{\lambda}_t)_{t \geq 0 }$ in \eqref{eq:ktulaproc}. We have, for any $0<\lambda\leq \lambda_{\max}\leq 1$, $n\in\N_0$, $t\in(n ,n+1 ]$, that
\begin{align*}
\E \left[\left| \nabla h_{\lambda}(\overline{\theta}^{\lambda}_t) F_2(\overline{\theta}^{\lambda}_t)\right|^2\right] 
&\leq \E \left[\left| \nabla h_{\lambda}(\overline{\theta}^{\lambda}_t)\lambda(t-n )(-\nabla h_{\lambda}(\overline{\theta}^{\lambda}_n))\left(\overline{\theta}^{\lambda}_n-\overline{\theta}^{\lambda}_t-\lambda(t-n )  h_{\lambda}(\overline{\theta}^{\lambda}_n)\right)\right|^2\right]\\
&\leq \lambda^2\E \left[\| \nabla h_{\lambda}(\overline{\theta}^{\lambda}_t)\|^2\|\nabla h_{\lambda}(\overline{\theta}^{\lambda}_n)\|^2|\sqrt{2\lambda\beta^{-1}} ( B^{\lambda}_t- B^{\lambda}_n)|^2\right]\\
&\leq 2 \beta^{-1} \cL_0^2\lambda^2\left(\E \left[\| \nabla h_{\lambda}(\overline{\theta}^{\lambda}_t)\|^4\right]\right)^{1/2} \left(\E \left[|  B^{\lambda}_t- B^{\lambda}_n|^4\right]\right)^{1/2},
\end{align*}
where the last inequality is obtained by applying \eqref{eq:nablahlbdub} and Cauchy-Schwarz inequality. Thus, for any $0<\lambda\leq \lambda_{\max}\leq 1$, $n\in\N_0$, $t\in(n ,n+1 ]$, by applying \eqref{eq:nablahlbdaue} with $\epsilon=2$ and by noticing $\E \left[|  B^{\lambda}_t- B^{\lambda}_n|^4\right]\leq d(2+d)$, we obtain that
\[
\E \left[\left| \nabla h_{\lambda}(\overline{\theta}^{\lambda}_t) F_2(\overline{\theta}^{\lambda}_t)\right|^2\right] \leq \cC_{D,\epsilon_h} \lambda^2/2,
\]
where 
\begin{align}\label{eq:cDexp}
\begin{split}
\cC_{D,\epsilon_h}
&\coloneqq 2^{6((l+1)(1/\epsilon_h+1)+8}\cK_{\nabla,\epsilon_h}^2\max\{\cL_0^2d/\beta,(1+a+K_h)^4+16(1+d/\beta)^2\}\\
&\qquad \times\cc_{\lceil2((l+1)/\epsilon_h+l)\rceil} (1+1/a)\left(1+\E\left[|\theta_0|^{2\lceil2((l+1)/\epsilon_h+l)\rceil}\right]\right)
\end{split}
\end{align}
with $\cK_{\nabla,\epsilon_h}\coloneqq (10\sqrt{2}+4\epsilon_h^{-1})(l+1)^2\max\{K_H,L,K_h,a,\|\nabla h_{\lambda}(0)\|,1\}$, $\cL_0\coloneqq 2a+4K_H+(l+1)(2K_h+a)$, and $\cc_p$, $p\in [2, \infty)\cap {\N}$, given in \eqref{eq:2pthmmtexpconst}.

Similarly, by using the expression of $F_3$ in \eqref{eq:decompterms}, we have, for any $0<\lambda\leq \lambda_{\max}\leq 1$, $n\in\N_0$, $t\in(n ,n+1 ]$, that
\[
\E \left[\left| \nabla h_{\lambda}(\overline{\theta}^{\lambda}_t) F_3(\overline{\theta}^{\lambda}_t)\right|^2\right] \leq \lambda^2 \left(\E \left[\| \nabla h_{\lambda}(\overline{\theta}^{\lambda}_t)\|^4\right]\right)^{1/2}\left(\E \left[|  h_{\lambda}(\overline{\theta}^{\lambda}_n)|^4\right]\right)^{1/2},
\]
which implies, by using \eqref{eq:nablahlbdaue} with $\epsilon=2$, Lemma \ref{lem:hlambdaestimates}-\ref{lem:hlambdaestimates2}, and Lemma \ref{lem:2ndpthmmt}, that
\[
\E \left[\left| \nabla h_{\lambda}(\overline{\theta}^{\lambda}_t) F_3(\overline{\theta}^{\lambda}_t)\right|^2\right] \leq \cC_{D,\epsilon_h} \lambda^2/2
\]
with $\cC_{D,\epsilon_h} >0$ given in \eqref{eq:cDexp}. Combining the two upper bounds yields the result.
\end{proof}

\begin{proof}[\textbf{Proof of Lemma \ref{lem:decomptermsrt}}]\label{lem:decomptermsrtproof}
For fixed $\theta, \overline{\theta} \in\R^d$, we follow the same argument as in the proof of Lemma~\ref{lem:hlambdaestimates}-\ref{lem:hlambdaestimates3} up to \eqref{eq:hlbdftoc} to obtain that
\begin{align*}
|h_{\lambda}(\theta)  - h_{\lambda}(\overline{\theta})-\nabla h_{\lambda}(\overline{\theta})(\theta - \overline{\theta})| 
&= \left|\int_0^1 ( \nabla h_{\lambda}(t\theta+(1-t)\overline{\theta}) - \nabla h_{\lambda}(\overline{\theta})) \,\rmd t (\theta-\overline{\theta})\right|\\
&\leq  2^{(l+1)(1/\epsilon_h+1)-2}\cL_{\nabla,\epsilon_h}(1+|\theta|+|\overline{\theta}|)^{(l+1)(1/\epsilon_h+1)-2} |\theta-\overline{\theta}|^2,
\end{align*}
where the inequality holds due to Lemma \ref{lem:hlambdaestimates}-\ref{lem:hlambdaestimates3} with $\cL_{\nabla,\epsilon_h} \coloneqq (10\sqrt{2}+4\epsilon_h^{-1})(l+1)^2\max\{K_H,L,K_h,a\}$. This, together with the expression of $r_t$ in \eqref{eq:decompterms}, yields, for any $0<\lambda\leq \lambda_{\max}\leq 1$, $n\in\N_0$, $t\in(n ,n+1 ]$, that
\begin{align}\label{eq:rtub}
\E \left[| r_t(\overline{\theta}^{\lambda}_t)  |^2\right] 
&= \E\left[\left| h_{\lambda}(\overline{\theta}^{\lambda}_n) - h_{\lambda}(\overline{\theta}^{\lambda}_t) - \nabla h_{\lambda}(\overline{\theta}^{\lambda}_t)  (\overline{\theta}^{\lambda}_n-\overline{\theta}^{\lambda}_t)\right|^2\right] \nonumber\\
&\leq  2^{2(l+1)(1/\epsilon_h+1)-4}\cL_{\nabla,\epsilon_h}^2\E \left[  \left(1+|\overline{\theta}^{\lambda}_n|+|\overline{\theta}^{\lambda}_t|  \right)^{2(l+1)(1/\epsilon_h+1)-4}|\overline{\theta}^{\lambda}_n - \overline{\theta}^{\lambda}_t|^4\right]\nonumber\\
\begin{split}
&\leq 2^{2(l+1)(1/\epsilon_h+1)-4}\cL_{\nabla,\epsilon_h}^2\left(\E \left[  \left(1+|\overline{\theta}^{\lambda}_n|+|\overline{\theta}^{\lambda}_t|  \right)^{4(l+1)(1/\epsilon_h+1)-8}\right]\right)^{1/2}  \\
&\qquad \times\left(\E \left[|\overline{\theta}^{\lambda}_n - \overline{\theta}^{\lambda}_t|^8\right]\right)^{1/2},
\end{split}
\end{align}
where the last inequality holds due to Cauchy-Schwarz inequality. Then, by using Lemma \ref{lem:2ndpthmmt}, we obtain that
\begin{align}\label{eq:rtub1}
\begin{split}
&\left(\E \left[  \left(1+|\overline{\theta}^{\lambda}_n|+|\overline{\theta}^{\lambda}_t|  \right)^{4(l+1)(1/\epsilon_h+1)-8}\right]\right)^{1/2}\\
&\leq\left(\E \left[  \left(1+|\overline{\theta}^{\lambda}_n|+|\overline{\theta}^{\lambda}_t|  \right)^{2\lceil2((l+1)/\epsilon_h+l)\rceil}\right]\right)^{1/2}\\
&\leq 3^{2(l+1)(1/\epsilon_h+1)-1/2}\left(\cc_{\lceil2((l+1)/\epsilon_h+l)\rceil}  (1+1/a) \left(1+\E\left[|\theta_0|^{2\lceil2((l+1)/\epsilon_h+l)\rceil}\right]\right)\right)^{1/2}.
\end{split}
\end{align}
Moreover, by using the definition of $(\overline{\theta}^{\lambda}_t)_{t \geq 0 }$ in \eqref{eq:ktulaproc} and by using Lemma \ref{lem:hlambdaestimates}-\ref{lem:hlambdaestimates2}, we can deduce that
\begin{equation}\label{eq:rtub2}
\E \left[|\overline{\theta}^{\lambda}_n - \overline{\theta}^{\lambda}_t|^{8}\right]\leq \cC_{\mathrm{OSE}}\lambda^{4},
\end{equation}
where $\cC_{\mathrm{OSE}} \coloneqq 2^{24}((1+a+K_h)^8+2^8(1+d/\beta)^4)\cc_{\lceil2((l+1)/\epsilon_h+l)\rceil}  (1+1/a) \left(1+\E\left[|\theta_0|^{2\lceil2((l+1)/\epsilon_h+l)\rceil}\right]\right)$. Finally, substituting \eqref{eq:rtub1} and \eqref{eq:rtub2} into \eqref{eq:rtub} yields, for any $0<\lambda\leq \lambda_{\max}\leq 1$, $n\in\N_0$, $t\in(n ,n+1 ]$, that
\[
\E \left[| r_t(\overline{\theta}^{\lambda}_t)  |^2\right] \leq \cC_{D,\epsilon_h}  \lambda^2/2,
\]
where $\cC_{D,\epsilon_h} >0$ is given in \eqref{eq:cDexp}.

Furthermore, by using the expression of $\overline{r}_t$ in \eqref{eq:decompterms}, Lemma \ref{lem:hlambdaestimates}-\ref{lem:hlambdaestimates4} and Lemma \ref{lem:2ndpthmmt}, we obtain, for any $0<\lambda\leq \lambda_{\max}\leq 1$, $n\in\N_0$, $t\in(n ,n+1 ]$, that
\[
\E \left[| \overline{r}_t(\overline{\theta}^{\lambda}_t)  |^2\right] \leq \E \left[\left| h_{\lambda}(\overline{\theta}^{\lambda}_t) - h(\overline{\theta}^{\lambda}_t)\right|^2\right] \leq \cC_{D,\epsilon_h}  \lambda^2/2
\]
with $\cC_{D,\epsilon_h} >0$ is given in \eqref{eq:cDexp}. This completes the proof.
\end{proof}

\newcommand{\pnl}{\hat{\pi}^\lambda_n}
\newcommand{\pt}{\hat{\pi}^\lambda_t}

\section{More details on functional equality}
\label{appen:changeintnder}
Recall that $\pi^{\lambda}_t$ denotes the density of $\mathcal{L}(\overline{\theta}^{\lambda}_t) $ for all $n\in\N_0$, $t\in[n,n+1]$, where $(\overline{\theta}^{\lambda}_t)_{t \geq 0 }$ is the continuous-time interpolation of kTULA~\eqref{eq:ktula}-\eqref{eq:ktulah} defined in \eqref{eq:ktulaproc}. In order to obtain the functional equality~\eqref{eq:changeofKLint}, we provide in the following lemma upper estimates for $\pi^{\lambda}_t$ together with the estimates for its log-gradient and log-Hessian. These estimates play the same role as the ones in \cite[Lemmas A5, A7, and A8]{lytras2025taming}, thus, we can proceed with the same arguments as those in \cite[Section 9]{lytras2025taming} to obtain the desired equality~\eqref{eq:changeofKLint}.

\begin{lemma}\label{lem:ueforintnderchange} Let Assumptions \ref{asm:initialc}, \ref{asm:polylip}, and \ref{asm:dissip} hold. Then, we have, for all  $0<\lambda\leq \lambda_{\max}\leq 1/(6\cL_0)^{1/(1-\epsilon_h)}$ with $\lambda_{\max}$ given in~\eqref{eq:stepsizemax}, $n\in \N_0$, $t\in (n,n+1]$, $\theta\in \R^d$, that
\begin{align}
{\pi}^{\lambda}_t(\theta) 				&\leq \overline{\cC}_0 e^{-\overline{\cC}_1 |\theta|^2}, \label{eq:ueforintnderchangeest1}\\
|\nabla \log {\pi}^{\lambda}_t(\theta)|		&\leq \overline{\cC}_0 (1+|\theta| ), \label{eq:ueforintnderchangeest2}\\
\|\nabla^2 \log {\pi}^{\lambda}_t(\theta)\|	&\leq \overline{\cC}_0 (1+|\theta|^2 ), \label{eq:ueforintnderchangeest3}
\end{align}
where $\overline{\cC}_0,\overline{\cC}_1>0$.
\end{lemma}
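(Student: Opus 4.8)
The plan is to prove the three bounds as a priori regularity estimates for the interpolated density $\pi^{\lambda}_t$, uniform in $n$ but with constants $\overline{\cC}_0,\overline{\cC}_1$ allowed to depend on $d,\beta,\lambda$ and the constants of Assumptions~\ref{asm:initialc}--\ref{asm:dissip}; this is all that is needed, since these estimates serve the same role as \cite[Lemmas A5, A7, and A8]{lytras2025taming}, so once they are in hand the argument of \cite[Section 9]{lytras2025taming} applies to yield~\eqref{eq:changeofKLint}. The first and crucial step is a uniform Gaussian-type moment bound: there is a suitable $\gamma>0$ with $M\coloneqq\sup_{n\in\N_0}\E[e^{\gamma|\overline{\theta}^{\lambda}_n|^2}]<\infty$. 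Using the recursion $\overline{\theta}^{\lambda}_{n+1}=f(\overline{\theta}^{\lambda}_n)+\sqrt{2\lambda\beta^{-1}}\xi_{n+1}$ with $f(\theta)\coloneqq\theta-\lambda h_\lambda(\theta)$ as in~\eqref{eq:deff}, the exact Gaussian moment generating function gives, for $\gamma<\beta/(4\lambda)$,
\[
\E\!\left[\left.e^{\gamma|\overline{\theta}^{\lambda}_{n+1}|^2}\,\right|\overline{\theta}^{\lambda}_n\right]=\bigl(1-4\gamma\lambda/\beta\bigr)^{-d/2}\exp\!\bigl(\gamma|f(\overline{\theta}^{\lambda}_n)|^2/(1-4\gamma\lambda/\beta)\bigr),
\]
and, combined with the contraction $|f(\theta)|^2\le(1-a\lambda)|\theta|^2+\lambda\cc_0$ (immediate from the dissipativity and linear growth of $h_\lambda$, Lemma~\ref{lem:hlambdaestimates}-\ref{lem:hlambdaestimates1} and \ref{lem:hlambdaestimates}-\ref{lem:hlambdaestimates2}, exactly as in~\eqref{eq:delub}) and the choice $\gamma<a\beta/4$, a weighted Young inequality produces a Foster--Lyapunov estimate $\E[e^{\gamma|\overline{\theta}^{\lambda}_{n+1}|^2}\mid\overline{\theta}^{\lambda}_n]\le\rho\,e^{\gamma|\overline{\theta}^{\lambda}_n|^2}+K$ with $\rho<1$, $K<\infty$; iterating and invoking $\E[e^{\gamma|\theta_0|^2}]<\infty$ from Assumption~\ref{asm:initialc} (shrinking $\gamma$ if necessary) gives $M<\infty$.

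For the Gaussian upper bound~\eqref{eq:ueforintnderchangeest1} I would first treat integer times. Writing $\pi^{\lambda}_{n+1}(\theta)=\E[\phi_\sigma(\theta-f(\overline{\theta}^{\lambda}_n))]$ with $\sigma^2=2\lambda\beta^{-1}$ and $\phi_\sigma$ the $N(0,\sigma^2\cI_d)$ density, split the expectation over $\{|\overline{\theta}^{\lambda}_n|\le|\theta|/2\}$ and its complement: on the first event $|f(\overline{\theta}^{\lambda}_n)|\le\tfrac34|\theta|$ whenever $|\theta|\ge4\sqrt{\lambda\cc_0}$ (by the contraction above), so $|\theta-f(\overline{\theta}^{\lambda}_n)|\ge|\theta|/4$ and $\phi_\sigma(\cdot)\le(2\pi\sigma^2)^{-d/2}e^{-|\theta|^2/(32\sigma^2)}$; on the second, $\phi_\sigma\le(2\pi\sigma^2)^{-d/2}$ while $\E[\1_{\{|\overline{\theta}^{\lambda}_n|>|\theta|/2\}}]\le Me^{-\gamma|\theta|^2/4}$ by Markov's inequality and the previous step. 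Together with the trivial bound $\pi^{\lambda}_{n+1}\le\|\phi_\sigma\|_\infty$ for small $|\theta|$ (and Assumption~\ref{asm:initialc} for $n=0$), this yields $\pi^{\lambda}_n(\theta)\le\overline{\cC}_0' e^{-\overline{\cC}_1|\theta|^2}$ with $n$-independent constants and $\overline{\cC}_1=\min\{\beta/(64\lambda),\gamma/4\}$. For $t\in(n,n+1]$, $\pi^{\lambda}_t=\psi^{\lambda}_{n,t}*\phi_\tau$, where $\psi^{\lambda}_{n,t}$ is the law of $\Delta^{\lambda}_{n,t}=f_{n,t}(\overline{\theta}^{\lambda}_n)$ with $f_{n,t}(\theta)\coloneqq\theta-\lambda(t-n)h_\lambda(\theta)$ and $\tau^2=2\lambda(t-n)\beta^{-1}$; since $\lambda(t-n)\|\nabla h_\lambda\|\le\cL_0\lambda^{1-\epsilon_h}\le\tfrac16$ by~\eqref{eq:nablahlbdub} and $\lambda\le\lambda_{\max}$, $f_{n,t}$ is a bi-Lipschitz diffeomorphism with $\|(\nabla f_{n,t})^{-1}\|\le\tfrac65$ and $|f_{n,t}^{-1}(z)|^2\ge|z|^2-\lambda\cc_0$ (from~\eqref{eq:delub}), whence $\psi^{\lambda}_{n,t}(z)\le(6/5)^d\,\overline{\cC}_0'\,e^{\overline{\cC}_1\lambda\cc_0}e^{-\overline{\cC}_1|z|^2}$, and convolving with $\phi_\tau$ (using $|\theta-w|^2\ge|\theta|^2/2-|w|^2$ and $\E[e^{\overline{\cC}_1|\Xi^{\lambda}_{n,t}|^2}]=(1-2\overline{\cC}_1\tau^2)^{-d/2}\le(16/15)^{d/2}$) gives~\eqref{eq:ueforintnderchangeest1} with $\overline{\cC}_1$ halved and an $n,t$-independent constant.

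For~\eqref{eq:ueforintnderchangeest2}--\eqref{eq:ueforintnderchangeest3} I would again separate integer and intermediate times. At integer times~\eqref{eq:ueforintnderchangeest2} is exactly Lemma~\ref{lem:ktuladensitygrowth}, and~\eqref{eq:ueforintnderchangeest3} follows from the same Gaussian-smoothing argument applied to the identity $\nabla^2\log p_{Y+\sigma Z}(x)=-\sigma^{-2}\cI_d+\sigma^{-4}\mathrm{Cov}(Y\mid Y+\sigma Z=x)$ with $Y=\Delta^{\lambda}_{n-1,n}$ and the uniform moment bounds of Lemma~\ref{lem:2ndpthmmt}. For $t\in(n,n+1]$, $\pi^{\lambda}_t=\psi^{\lambda}_{n,t}*\phi_\tau$, and the log-derivatives of $\psi^{\lambda}_{n,t}$ are expressed through those of $\pi^{\lambda}_n$ composed with $f_{n,t}^{-1}$ together with $\nabla\log\det\nabla f_{n,t}$ and $\nabla^2\log\det\nabla f_{n,t}$; all of these have polynomial growth — the first pieces by the (inductive) polynomial bounds on $\nabla\log\pi^{\lambda}_n$ and $\nabla^2\log\pi^{\lambda}_n$, the last by $\|\nabla h_\lambda\|\le\cL_0\lambda^{-\epsilon_h}$, the polynomial Lipschitz bound of Lemma~\ref{lem:hlambdaestimates}-\ref{lem:hlambdaestimates3}, and the growth bound on $\nabla(\nabla h_\lambda)^{(i,j)}$ established inside the proof of Lemma~\ref{lem:ubforfisherinfo} — while convolution with the smooth Gaussian $\phi_\tau$ does not worsen them, thanks to Lemma~\ref{lem:2ndpthmmt}. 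This is the direct analogue of \cite[Lemmas A7 and A8]{lytras2025taming}.

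The main obstacle is the uniformity in $n$ of the Gaussian upper bound: a naive induction on the bound for $\pi^{\lambda}_n$ fails, because every change of variables through $f_{n,t}$ (or $f$) contributes a dimension-dependent Jacobian factor $(\det\nabla f_{n,t})^{-1}>1$ that would compound over $n$; the role of the Foster--Lyapunov step is precisely to replace that induction by the single $n$-uniform exponential-square moment $M$, which is then fed through the tail split. A secondary subtlety is that the Gaussian smoothing over $(n,t]$ degenerates as $t\downarrow n$, so the short-time estimates must be inherited from the already-regular density $\pi^{\lambda}_n$ through the pushforward $\psi^{\lambda}_{n,t}$, rather than from the increment $\Xi^{\lambda}_{n,t}$ alone.
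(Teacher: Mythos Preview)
Your approach is sound and takes a genuinely different route from the paper, chiefly for \eqref{eq:ueforintnderchangeest1}. The paper proceeds by exactly the induction you warn against: assuming $\pi^\lambda_n(\theta)\le\overline{\cC}_4e^{-\overline{\cC}_5|\theta|^2}$, it pushes forward through the bi-Lipschitz map $f$ using only $\cI_d/2\preceq\nabla f\preceq 3\cI_d/2$ and $\psi_n^\lambda(z)\le 2^d\pi_n^\lambda(f^{-1}(z))$, then convolves with the Gaussian increment; intermediate times $t\in(n,n+1)$ are handled by the same argument with $f$ replaced by $\overline{f}_t(\theta)=\theta-\lambda(t-n)h_\lambda(\theta)$. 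The paper does not track whether the constants remain uniform in $n$ (and for the sole application, namely justifying \eqref{eq:changeofKLint} on each interval $(n,n+1]$, only constants uniform in $t$ for fixed $n$ are actually needed). Your Foster--Lyapunov exponential-square moment plus tail split delivers $n$-uniform constants cleanly, and the mechanism you identify---the dissipativity-driven contraction $|f(\theta)|^2\le(1-a\lambda)|\theta|^2+\lambda\cc_0$ from \eqref{eq:delub}---is precisely what the paper's bare change-of-variables does not exploit. For \eqref{eq:ueforintnderchangeest2} the paper applies the Polyanskiy--Wu bound directly at smoothing scale $2\lambda(t-n)/\beta$, giving the prefactor $(\lambda(t-n))^{-1}$ which it simply absorbs into $\overline{\cC}_0$; your device of inheriting regularity from $\pi^\lambda_n$ through the pushforward $\psi^{\lambda}_{n,t}$ is more careful about the $t\downarrow n$ limit. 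For \eqref{eq:ueforintnderchangeest3} the paper uses the splitting $\nabla^2\log\pi^\lambda_t=\nabla^2\pi^\lambda_t/\pi^\lambda_t-(\nabla\log\pi^\lambda_t)(\nabla\log\pi^\lambda_t)^\cT$ and bounds the first piece via the convolution representation together with \eqref{eq:ueforintnderchangeest2}; your conditional-covariance identity is an equivalent device phrased differently.
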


\begin{proof} First, we show \eqref{eq:ueforintnderchangeest1} holds by induction. We note that $\pi^\lambda_0$ decays exponentially by Assumption \ref{asm:initialc}, i.e., there exist $\overline{\cC}_2, \overline{\cC}_3>0$ such that for all $\theta\in\R^d$,
\[
\pi^\lambda_0(\theta)\leq \overline{\cC}_2 e^{-\overline{\cC}_3|\theta|^2}.
\]
For $n\in\N$, we assume $\pi^\lambda_n$ decays exponentially, i.e., there exist $\overline{\cC}_4, \overline{\cC}_5>0$ such that for all $\theta\in\R^d$,
\begin{equation}\label{eq:inductionasm}
\pi^\lambda_n(\theta)\leq \overline{\cC}_4 e^{-\overline{\cC}_5|\theta|^2}. 
\end{equation}
We proceed to show that $\pi^\lambda_{n+1}$, $n \in \N$, decays exponentially. To this end, recall the definition of $f$ given in \eqref{eq:deff}. By using Lemma~\ref{lem:hlambdaestimates}-\ref{lem:hlambdaestimates3} and by $0<\lambda\leq \lambda_{\max}\leq 1/(6\cL_0)^{1/(1-\epsilon_h)}$, we have, for any $\theta\in\R^d$, that
\[
\|\lambda  \nabla h_{\lambda}(\theta)\|\leq 1/2,
\]
which implies that $f$ is a bi-Lipschitz mapping, i.e.,
\begin{equation}\label{eq:fbilipappen}
 \cI_d/2\preceq \nabla f \preceq 3 \cI_d/2.
\end{equation}
Since $f$ defined in \eqref{eq:deff} is twice continuously differentiable, by using \cite[Theorem A]{gordon1972diffeomorphisms} and the inverse function theorem, we deduce that $f$ is bijective and there exists a twice continuously differentiable inverse map denoted by $f^{-1}$ with $f^{-1} :\R^d\rightarrow \R^d$ such that 
\[
\nabla f^{-1}=(\nabla f)^{-1}.
\]
We note that $f^{-1}$ is Lipschitz continuous due to \eqref{eq:fbilipappen}. Then, for $Z =f(\overline{\theta}^{\lambda}_n)$, we apply the transformation of variables to obtain that
\[
\psi_n^{\lambda}(z) = \frac{\pi_n^{\lambda}(f^{-1}(z))}{\rmdet(\nabla f(f^{-1}(z)))} \leq \frac{\pi_n^{\lambda}(f^{-1}(z))}{(\Lambda_{\min}(\nabla f(\theta)))^d} \leq 2^d \pi_n^{\lambda}(f^{-1}(z)) \leq  2^d\overline{\cC}_4 e^{-\overline{\cC}_5|f^{-1}(z)|^2}\leq \overline{\cC}_6 e^{-\overline{\cC}_7|z|^2},
\]
for some $\overline{\cC}_6, \overline{\cC}_7>0$, where $\Lambda_{\min}(\nabla f(\theta))$ denotes the smallest eigenvalue of $\nabla f(\theta)$, the second inequality holds due to \eqref{eq:fbilipappen}, the third inequality holds due to the induction assumption \eqref{eq:inductionasm} and the last inequality holds due to the fact that, for all $z\in\R^d$,
\[
||z|-|f(0)||\leq |f(f^{-1}(z))-f(0)|\leq (3/2)|f^{-1}(z)|.
\]
By using \eqref{eq:ktulaproc} and by noticing that $\pi_{n+1}^{\lambda} =\psi_n^{\lambda}* \phi^{\lambda}_{\beta}$ with $\psi_n^{\lambda}$ denoting the density of $f(\overline{\theta}^{\lambda}_n)$, $n \in \N_0$, and $\phi^{\lambda}_{\beta}$ denoting the density of the normal distribution with mean 0 and variance $2\lambda\beta^{-1}\cI_d$, we have, for any $\theta\in\R^d$, that
\begin{align}\label{eq:pilambdan+1ub}
\begin{split}
\pi_{n+1}^{\lambda}(\theta) =(\psi_n^{\lambda}* \phi^{\lambda}_{\beta})(\theta)
&=\int_{\R^d} \psi_n (y) \phi^{\lambda}_{\beta}(\theta-y) \rmd y \\
&\leq \int_{\R^d} \overline{\cC}_6 e^{-\overline{\cC}_7|y|^2}(4 \pi\lambda/\beta)^{-d/2} e^{-(\beta/4\lambda)|\theta-y|^{2}} \rmd y \\
&\leq \overline{\cC}_8e^{-\overline{\cC}_9|\theta|^{2}},
\end{split}
\end{align}
where $\overline{\cC}_8,\overline{\cC}_9>0$. Hence, by induction, we can conclude that $\pi^\lambda_n$, $n \in \N_0$, decays exponentially. To show this property holds for $\pi^\lambda_t$, $n\in \N_0$, $t\in(n,n+1)$, we define, for each $t$, a function $\overline{f}_t:\R^d\to\R^d$ given by, for any $\theta\in\R^d$,
\begin{equation}\label{eq:defoverlinef}
\overline{f}_t(\theta)=\theta-\lambda(t-n)h_{\lambda}(\theta).
\end{equation}
Then, we can use the same argument as that in \eqref{eq:pilambdan+1ub} to obtain the desired result, namely, $\pi^\lambda_t$ decays exponentially. This completes the proof for \eqref{eq:ueforintnderchangeest1}.

Then, we show the second inequality \eqref{eq:ueforintnderchangeest2} holds. Following the proof of Lemma \ref{lem:ktuladensitygrowth}, we obtain, for any $n \in \N_0$, $t\in(n,n+1]$, $\theta\in\R^d$, that 
\[
|\nabla \log \pi^{\lambda}_t(\theta)| \leq (\lambda(t-n))^{-1}(\widetilde{\cc}_1|\theta|+\widetilde{\cc}_2)\leq \overline{\cC}_0 (1+|\theta| ),
\]
where $\widetilde{\cc}_1 \coloneqq 2\beta  $ and $\widetilde{\cc}_2 \coloneqq 2\beta  ( \E\left[|\theta_0|^2\right] +\cc_0(1+1/a))^{1/2}$ with $\cc_0$ given in Lemma \ref{lem:2ndpthmmt}-\ref{lem:2ndpthmmti}.

To prove \eqref{eq:ueforintnderchangeest3}, we use the definition of $\overline{f}_t$ in \eqref{eq:defoverlinef} and notice that $\pi_t^{\lambda} =\overline{\psi}_t^{\lambda}* \overline{\phi}^{\lambda}_{\beta}$ with $\overline{\psi}_t^{\lambda}$ denoting the density of $\overline{f}_t(\overline{\theta}^{\lambda}_n)$, $n \in \N_0$, $t\in(n,n+1]$, and $\overline{\phi}^{\lambda}_{\beta}$ denoting the density of the normal distribution with mean 0 and variance $2\lambda (t-n)\beta^{-1}\cI_d$. Then, straight forward calculations yield, for any $\theta\in\R^d$, that
\begin{align}\label{eq:densityloghessub}
\|\nabla^2\log {\pi}^{\lambda}_t(\theta)\|
&\leq \left\|\frac{\nabla^2 {\pi}^{\lambda}_t(\theta)}{{\pi}^{\lambda}_t(\theta)}\right\|
+\left\|\frac{\nabla {\pi}^{\lambda}_t(\theta) (\nabla {\pi}^{\lambda}_t(\theta))^\cT}{({\pi}^{\lambda}_t(\theta))^2}\right\|\nonumber \\
&= \left\|\frac{\nabla^2 (\overline{\psi}_t^{\lambda}*\overline{\phi}^{\lambda}_{\beta}) (\theta)}{(\overline{\psi}_t^{\lambda}*\overline{\phi}^{\lambda}_{\beta})(\theta)}\right\|+|\nabla \log {\pi}^{\lambda}_t(\theta)|^2.
\end{align}
We note that the second therm on the RHS of \eqref{eq:densityloghessub} can be controlled by \eqref{eq:ueforintnderchangeest2}. To upper bound the first term on the RHS of \eqref{eq:densityloghessub}, we write, for any $\theta\in\R^d$, that
\begin{align}\label{eq:densityloghessub1}
\left\|\frac{\nabla^2 (\overline{\psi}_t^{\lambda}*\overline{\phi}^{\lambda}_{\beta}) (\theta)}{(\overline{\psi}_t^{\lambda}*\overline{\phi}^{\lambda}_{\beta})(\theta)}\right\|
&= \left\|\frac{\int_{\R^d} \nabla \log \overline{\psi}_t^{\lambda}(y)( \nabla \log \overline{\phi}^{\lambda}_{\beta}(\theta-y))^{\cT} \overline{\psi}_t^{\lambda}(y) \overline{\phi}^{\lambda}_{\beta}(\theta-y) \rmd y }{\int_{\R^d} \overline{\psi}_t^{\lambda}(y)\overline{\phi}^{\lambda}_{\beta}(\theta-y) \rmd y}\right\| \nonumber\\
&\leq \overline{\cC}_{10} \frac{\int_{\R^d} (1+|\theta|+|y|) \overline{\psi}_t^{\lambda}(y)\overline{\phi}^{\lambda}_{\beta}(\theta-y)\rmd y}{\int_{\R^d} \overline{\psi}_t^{\lambda}(y)\overline{\phi}^{\lambda}_{\beta}(\theta-y) \rmd y},
\end{align}
where $\overline{\cC}_{10}>0$ and the last inequality holds due to \eqref{eq:ueforintnderchangeest2}. We can then obtain an upper and a lower bound for the numerator and the denominator of the fraction on the RHS of \eqref{eq:densityloghessub1}, respectively, which can be achieved by using the same argument as that in \cite[Lemma A.7]{lytras2025taming}. Finally, we notice that, for any $\theta\in\R^d$, the following inequalities
\[
\left\|\frac{\nabla^2 (\overline{\psi}_t^{\lambda}*\overline{\phi}^{\lambda}_{\beta}) (\theta)}{(\overline{\psi}_t^{\lambda}*\overline{\phi}^{\lambda}_{\beta})(\theta)}\right\|
\leq \overline{\cC}_{11}(1+|\theta|),
\quad
|\nabla \log {\pi}^{\lambda}_t(\theta)|^2 \leq \overline{\cC}_{12}(1+|\theta|^2)
\]
hold for some $\overline{\cC}_{11},\overline{\cC}_{12}>0$, then, substituting the above upper bounds into \eqref{eq:densityloghessub} yields the desired result.
\end{proof}

\newpage
\section{Analytic Expression of Constants}
\label{appen:constexp}
{\footnotesize
\begin{tabularx}{\textwidth}[H]
{ 
	>{\hsize=0.5\hsize\linewidth=\hsize}X
	>{\hsize=0.3\hsize\linewidth=\hsize}X|
	>{\centering\hsize=1\hsize\linewidth=\hsize}X|
	>{\hsize=2.2\hsize\linewidth=\hsize}X
} 
\hline
\hline
{CONSTANT} & & DEPENDENCE ON $d,\beta$& FULL EXPRESSION\\
\hline
Theorem \ref{thm:mainthm}		& $\lambda_{\max}$		&  --- 						&$\min\{  1,1/(8a), 1/(6(2a+4K_H+(l+1)(2K_h+a)))^{1/(1-\epsilon_h)}\}$\\
Corollary \ref{crl}				& $C_0$				&  --- 						&$3C_{\rmLS}/2$\\
Corollary \ref{crl:eer}			& $C_1$  				&$\mathcal{O}(\beta (1+d/\beta)^{2(l+1)(1/\epsilon_h+1)+1})$					&$ (40\beta/(3C_{\rmLS})) (\cC_{D,\epsilon,\epsilon_h}\cC_{J,\epsilon_h}^{1-\epsilon/2}+2\cC_{D,\epsilon_h} )$ \\
						& $C_2$  				&  ---  					&$(2C_{\rmLS})^{1/2}$ \\
						& $C_3$  				&$\mathcal{O}((1+d/\beta)^{(l+1)/2})$ 	&$2^l K_hC_2\rmKL(\pi^{\lambda}_0\|\pi_{\beta})^{1/2}( (\E[|\theta_0|^{2l+2}]+ \cc_{l+1}(1+1/a))^{1/2}+(2(b+(d+2l)/\beta)/a)^{(l+1)/2}+1)$ \\
						& $C_4$  				&$\mathcal{O}(\beta^{1/2} (1+d/\beta)^{(l+1)(1/\epsilon_h+3/2)+1/2})$				&$C_2C_1^{1/2}( (\E[|\theta_0|^{2l+2}]+ \cc_{l+1}(1+1/a))^{1/2}+(2(b+(d+2l)/\beta)/a)^{(l+1)/2}+1)$ \\
						& $C_5$  				&$\mathcal{O}(d\log((1+\beta/d)(1+d/\beta)^{l/2})$								&$\frac{d}{2}\log\left(\frac{K_H e (1+4\max\{\sqrt{b/a},\sqrt{2d/(\beta K_H)}\})^l}{a}\left(\frac{\beta b}{d}+1\right)\right)+\log 2$ \\
\hline	
Lemma \ref{lem:hlambdaestimates}	& $\cL_0$ 				&  --- 						&$2a+4K_H+(l+1)(2K_h+a)$\\
						& $\cL_{\nabla,\epsilon_h}$	&  --- 						&$ (10\sqrt{2}+4\epsilon_h^{-1})(l+1)^2\max\{K_H,L,K_h,a\}$\\
\hline
Lemma \ref{lem:2ndpthmmt} 		& $\cc_0$ 				&$\mathcal{O}(1+d/\beta)$		& $2b+8K_h^2+2\beta^{-1}d$\\
						& $\cc_p$ 				&$\mathcal{O}((1+d/\beta)^p)$	& $(1+2/a)^{p-1} (1+ 2b+8K_h^2)^p(1+2^{2p-1}p(2p-1) d\beta^{-1})  +2^{2p-4}(2p(2p-1))^{p+1}(\max\{1,d\beta^{-1} \})^p+a(\max\{1,2^{2p}p(2p-1)d(a\beta)^{-1}\})^{p}.$\\
\hline
Lemma \ref{lem:ktuladensitygrowth}	& $\widetilde{\cc}_1$ 		&$\mathcal{O}(\beta)$			& $2\beta $\\
						& $\widetilde{\cc}_2$ 		&$\mathcal{O}(\beta+d)$		& $2\beta ( \E\left[|\theta_0|^2\right] +\cc_0(1+1/a))^{1/2}$\\
\hline
Lemma \ref{lem:decomptermsF1}	& $\cK_{\nabla,\epsilon_h}$ 	&--- 						& $(10\sqrt{2}+4\epsilon_h^{-1})(l+1)^2\max\{K_H,L,K_h,a,\|\nabla h_{\lambda}(0)\|,1\}$\\
						& $\cC_{D,\epsilon,\epsilon_h}$ &$\mathcal{O}(\beta^{\epsilon-2}(1+d/\beta)^{(l+1)/\epsilon_h+l+\epsilon/4})$																									& $\left(1+\E\left[|\theta_0|^{\max\{2\lceil 4((l+1)/\epsilon_h+l)/\epsilon\rceil,4\}}\right]\right)^{\epsilon/2}\cK_{\nabla,\epsilon_h}^2\beta^{\epsilon-2}(1+1/a)^{3\epsilon/4}\cc_2^{\epsilon/4}\cc_{\lceil 4((l+1)/\epsilon_h+l)/\epsilon\rceil}^{\epsilon/4}2^{2(l+1)(1/\epsilon_h+1)+13\epsilon/4}
$\\
\hline
Lemma \ref{lem:ubforfisherinfo}		& $\cC_{J,\epsilon_h}$ 		&$\mathcal{O}(d^2(1+d/\beta)^{(l+1)/\epsilon_h+l}+d\beta)$		& $J_0+4\cC_{\psi,\epsilon_h}/\cL_0^2+ 6d\beta\cL_0$\\
						& $\cC_{\psi,\epsilon_h}$ 	&$\mathcal{O}(d^2(1+d/\beta)^{(l+1)/\epsilon_h+l})$			& $\left(1+\E\left[|\theta_0|^{2\lceil (l+1)(1/\epsilon_h+1)-2\rceil}\right]\right)2^{4(l+1)(1/\epsilon_h+1)-1} d^2\cL_{\nabla,\epsilon_h}^2(1+1/a)\cc_{\lceil (l+1)(1/\epsilon_h+1)-2\rceil}$\\
\hline
Lemma \ref{lem:decomptermsF2n3}	& $\cC_{D,\epsilon_h} $ 		&$\mathcal{O}( (1+d/\beta)^{2(l+1)(1/\epsilon_h+1)+1})$			& $\left(1+\E\left[|\theta_0|^{2\lceil2((l+1)/\epsilon_h+l)\rceil}\right]\right)2^{6((l+1)/\epsilon_h+l)+8}\cK_{\nabla,\epsilon_h}^2(1+1/a)\cc_{\lceil2((l+1)/\epsilon_h+l)\rceil} \max\{\cL_0^2d/\beta,(1+a+K_h)^4+16(1+d/\beta)^2\} $\\
\hline
\hline
\end{tabularx}
}

\bibliographystyle{plainnat}

\bibliography{references}

\begin{thebibliography}{38}
\providecommand{\natexlab}[1]{#1}
\providecommand{\url}[1]{\texttt{#1}}
\expandafter\ifx\csname urlstyle\endcsname\relax
  \providecommand{\doi}[1]{doi: #1}\else
  \providecommand{\doi}{doi: \begingroup \urlstyle{rm}\Url}\fi

\bibitem[Atto et~al.(2008)Atto, Pastor, and Mercier]{atto2008smooth}
Abdourrahmane~M Atto, Dominique Pastor, and Gregoire Mercier.
\newblock Smooth sigmoid wavelet shrinkage for non-parametric estimation.
\newblock In \emph{2008 IEEE International Conference on Acoustics, Speech and
  Signal Processing}, pages 3265--3268. IEEE, 2008.

\bibitem[Balasubramanian et~al.(2022)Balasubramanian, Chewi, Erdogdu, Salim,
  and Zhang]{balasubramanian2022towards}
Krishna Balasubramanian, Sinho Chewi, Murat~A Erdogdu, Adil Salim, and Shunshi
  Zhang.
\newblock {Towards a theory of non-log-concave sampling: first-order
  stationarity guarantees for Langevin monte carlo}.
\newblock In \emph{Conference on Learning Theory}, pages 2896--2923. PMLR,
  2022.

\bibitem[Bao and Hao(2024)]{bao2024}
Jianhai Bao and Jiaqing Hao.
\newblock {L2-Wasserstein contraction of modified Euler schemes for SDEs with
  high diffusivity and applications}.
\newblock \emph{arXiv preprint arXiv:2411.01731}, 2024.

\bibitem[Barkhagen et~al.(2021)Barkhagen, Chau, Moulines, R{\'a}sonyi, Sabanis,
  and Zhang]{convex}
Mathias Barkhagen, Ngoc~Huy Chau, {\'E}ric Moulines, Mikl{\'o}s R{\'a}sonyi,
  Sotirios Sabanis, and Ying Zhang.
\newblock {On stochastic gradient Langevin dynamics with dependent data streams
  in the logconcave case}.
\newblock \emph{Bernoulli}, 27\penalty0 (1):\penalty0 1--33, 2021.

\bibitem[Brosse et~al.(2019)Brosse, Durmus, Moulines, and Sabanis]{tula}
Nicolas Brosse, Alain Durmus, {\'E}ric Moulines, and Sotirios Sabanis.
\newblock The tamed unadjusted {L}angevin algorithm.
\newblock \emph{Stochastic Processes and their Applications}, 129\penalty0
  (10):\penalty0 3638--3663, 2019.

\bibitem[Chau et~al.(2021)Chau, Moulines, R{\'a}sonyi, Sabanis, and
  Zhang]{nonconvex}
Ngoc~Huy Chau, {\'E}ric Moulines, Miklos R{\'a}sonyi, Sotirios Sabanis, and
  Ying Zhang.
\newblock {On Stochastic Gradient Langevin Dynamics with Dependent Data
  Streams: The Fully Nonconvex Case}.
\newblock \emph{SIAM Journal on Mathematics of Data Science}, 3\penalty0
  (3):\penalty0 959--986, 2021.

\bibitem[Cheng et~al.(2018)Cheng, Chatterji, Abbasi-Yadkori, Bartlett, and
  Jordan]{berkeley}
Xiang Cheng, Niladri~S Chatterji, Yasin Abbasi-Yadkori, Peter~L Bartlett, and
  Michael~I Jordan.
\newblock Sharp convergence rates for {L}angevin dynamics in the nonconvex
  setting.
\newblock \emph{arXiv preprint arXiv:1805.01648}, 2018.

\bibitem[Chewi et~al.(2021)Chewi, Erdogdu, Li, Shen, and
  Zhang]{chewi2021analysis}
Sinho Chewi, Murat~A Erdogdu, Mufan~Bill Li, Ruoqi Shen, and Matthew Zhang.
\newblock {Analysis of Langevin Monte Carlo from Poincar{\'{e}} to
  Log-Sobolev}.
\newblock \emph{arXiv preprint arXiv:2112.12662}, 2021.

\bibitem[Dalalyan(2017)]{dalalyan2017theoretical}
Arnak~S Dalalyan.
\newblock Theoretical guarantees for approximate sampling from smooth and
  log-concave densities.
\newblock \emph{Journal of the Royal Statistical Society: Series B (Statistical
  Methodology)}, 79\penalty0 (3):\penalty0 651--676, 2017.

\bibitem[Durmus and Moulines(2017)]{durmus2017nonasymptotic}
Alain Durmus and Eric Moulines.
\newblock Nonasymptotic convergence analysis for the unadjusted {L}angevin
  algorithm.
\newblock \emph{The Annals of Applied Probability}, 27\penalty0 (3):\penalty0
  1551--1587, 2017.

\bibitem[Durmus and Moulines(2019)]{durmus2019high}
Alain Durmus and Eric Moulines.
\newblock High-dimensional {B}ayesian inference via the unadjusted {L}angevin
  algorithm.
\newblock \emph{Bernoulli}, 25\penalty0 (4A):\penalty0 2854--2882, 2019.

\bibitem[Erdogdu et~al.(2022)Erdogdu, Hosseinzadeh, and
  Zhang]{erdogdu2022convergence}
Murat~A Erdogdu, Rasa Hosseinzadeh, and Shunshi Zhang.
\newblock {Convergence of Langevin Monte Carlo in chi-squared and R{\'e}nyi
  divergence}.
\newblock In \emph{International Conference on Artificial Intelligence and
  Statistics}, pages 8151--8175. PMLR, 2022.

\bibitem[Gordon(1972)]{gordon1972diffeomorphisms}
Warren~B Gordon.
\newblock {On the diffeomorphisms of Euclidean space}.
\newblock \emph{The American Mathematical Monthly}, 79\penalty0 (7):\penalty0
  755--759, 1972.

\bibitem[Hutzenthaler et~al.(2011)Hutzenthaler, Jentzen, and
  Kloeden]{hutzenthaler2011}
Martin Hutzenthaler, Arnulf Jentzen, and Peter~E. Kloeden.
\newblock {Strong and weak divergence in finite time of Euler's method for
  stochastic differential equations with non-globally Lipschitz continuous
  coefficients}.
\newblock \emph{Proceedings of the Royal Society of London A: Mathematical,
  Physical and Engineering Sciences}, 467\penalty0 (2130):\penalty0 1563--1576,
  2011.
\newblock ISSN 1364-5021.

\bibitem[Hutzenthaler et~al.(2012)Hutzenthaler, Jentzen, and
  Kloeden]{hutzenthaler2012}
Martin Hutzenthaler, Arnulf Jentzen, and Peter~E. Kloeden.
\newblock {Strong convergence of an explicit numerical method for SDEs with
  nonglobally Lipschitz continuous coefficients}.
\newblock \emph{Ann. Appl. Probab.}, 22\penalty0 (4):\penalty0 1611--1641, 08
  2012.

\bibitem[Hwang(1980)]{hwang}
Chii-Ruey Hwang.
\newblock Laplace's method revisited: weak convergence of probability measures.
\newblock \emph{The Annals of Probability}, 8\penalty0 (6):\penalty0
  1177--1182, 1980.

\bibitem[Johnston et~al.(2023)Johnston, Lytras, and
  Sabanis]{johnston2023kinetic}
Tim Johnston, Iosif Lytras, and Sotirios Sabanis.
\newblock {Kinetic Langevin MCMC Sampling Without Gradient Lipschitz
  Continuity--the Strongly Convex Case}.
\newblock \emph{arXiv preprint arXiv:2301.08039}, 2023.

\bibitem[Krylov(1991)]{krylovsolmontone}
N.~V. Krylov.
\newblock {A} {S}imple {P}roof of the {E}xistence of a {S}olution of {I}tô's
  {E}quation with {M}onotone {C}oefficients.
\newblock \emph{Theory of Probability \& Its Applications}, 35\penalty0
  (3):\penalty0 583--587, 1991.
\newblock \doi{10.1137/1135082}.

\bibitem[Lim et~al.(2023)Lim, Neufeld, Sabanis, and
  Zhang]{lim2021nonasymptotic}
Dong-Young Lim, Ariel Neufeld, Sotirios Sabanis, and Ying Zhang.
\newblock {Non-asymptotic estimates for TUSLA algorithm for non-convex learning
  with applications to neural networks with ReLU activation function}.
\newblock \emph{IMA Journal of Numerical Analysis}, 2023.

\bibitem[Lim et~al.(2024)Lim, Neufeld, Sabanis, and Zhang]{lim2024langevin}
Dong-Young Lim, Ariel Neufeld, Sotirios Sabanis, and Ying Zhang.
\newblock {Langevin dynamics based algorithm e-TH $\varepsilon$ O POULA for
  stochastic optimization problems with discontinuous stochastic gradient}.
\newblock \emph{Mathematics of Operations Research}, 2024.

\bibitem[Lovas et~al.(2023)Lovas, Lytras, R{\'a}sonyi, and Sabanis]{TUSLA}
Attila Lovas, Iosif Lytras, Mikl{\'o}s R{\'a}sonyi, and Sotirios Sabanis.
\newblock {Taming neural networks with tusla: Nonconvex learning via adaptive
  stochastic gradient langevin algorithms}.
\newblock \emph{SIAM Journal on Mathematics of Data Science}, 5\penalty0
  (2):\penalty0 323--345, 2023.

\bibitem[Lytras and Mertikopoulos(2024)]{lytras2024tamed}
Iosif Lytras and Panayotis Mertikopoulos.
\newblock {Tamed Langevin sampling under weaker conditions}.
\newblock \emph{arXiv preprint arXiv:2405.17693}, 2024.

\bibitem[Lytras and Sabanis(2025)]{lytras2025taming}
Iosif Lytras and Sotirios Sabanis.
\newblock Taming under isoperimetry.
\newblock \emph{Stochastic Processes and their Applications}, page 104684,
  2025.

\bibitem[Mou et~al.(2022{\natexlab{a}})Mou, Flammarion, Wainwright, and
  Bartlett]{mou2022improved}
Wenlong Mou, Nicolas Flammarion, Martin~J Wainwright, and Peter~L Bartlett.
\newblock {Improved bounds for discretization of Langevin diffusions:
  Near-optimal rates without convexity}.
\newblock \emph{Bernoulli}, 28\penalty0 (3):\penalty0 1577--1601,
  2022{\natexlab{a}}.

\bibitem[Mou et~al.(2022{\natexlab{b}})Mou, Flammarion, Wainwright, and
  Bartlett]{mou2022improvedsupp}
Wenlong Mou, Nicolas Flammarion, Martin~J Wainwright, and Peter~L Bartlett.
\newblock {Supplement to "Improved bounds for discretization of Langevin
  diffusions: Near-optimal rates without convexity"}.
\newblock \emph{Bernoulli}, 28\penalty0 (3):\penalty0 1577--1601,
  2022{\natexlab{b}}.

\bibitem[Mousavi-Hosseini et~al.(2023)Mousavi-Hosseini, Farghly, He,
  Balasubramanian, and Erdogdu]{mousavi2023towards}
Alireza Mousavi-Hosseini, Tyler~K Farghly, Ye~He, Krishna Balasubramanian, and
  Murat~A Erdogdu.
\newblock {Towards a complete analysis of Langevin Monte Carlo: Beyond
  poincar{\'e} inequality}.
\newblock In \emph{The Thirty Sixth Annual Conference on Learning Theory},
  pages 1--35. PMLR, 2023.

\bibitem[Neufeld and Zhang(2024)]{neufeld2024non}
Ariel Neufeld and Ying Zhang.
\newblock {Non-asymptotic estimates for accelerated high order Langevin Monte
  Carlo algorithms}.
\newblock \emph{arXiv preprint arXiv:2405.05679}, 2024.

\bibitem[Neufeld et~al.(2025)Neufeld, Ng, and Zhang]{neufeld2025non}
Ariel Neufeld, Matthew Cheng~En Ng, and Ying Zhang.
\newblock {Non-asymptotic convergence bounds for modified tamed unadjusted
  Langevin algorithm in non-convex setting}.
\newblock \emph{Journal of Mathematical Analysis and Applications},
  543\penalty0 (1):\penalty0 128892, 2025.

\bibitem[Polyanskiy and Wu(2016)]{polyanskiy2016wasserstein}
Yury Polyanskiy and Yihong Wu.
\newblock Wasserstein continuity of entropy and outer bounds for interference
  channels.
\newblock \emph{IEEE Transactions on Information Theory}, 62\penalty0
  (7):\penalty0 3992--4002, 2016.

\bibitem[Raginsky et~al.(2017)Raginsky, Rakhlin, and Telgarsky]{raginsky}
Maxim Raginsky, Alexander Rakhlin, and Matus Telgarsky.
\newblock Non-convex learning via {S}tochastic {G}radient {L}angevin
  {D}ynamics: a nonasymptotic analysis.
\newblock In \emph{Conference on Learning Theory}, pages 1674--1703, 2017.

\bibitem[Rioul(2010)]{rioul2010information}
Olivier Rioul.
\newblock Information theoretic proofs of entropy power inequalities.
\newblock \emph{IEEE transactions on information theory}, 57\penalty0
  (1):\penalty0 33--55, 2010.

\bibitem[Sabanis(2013)]{tamed-euler}
Sotirios Sabanis.
\newblock {A note on tamed Euler approximations}.
\newblock \emph{Electron. Commun. Probab.}, 18\penalty0 (47):\penalty0 1--10,
  2013.

\bibitem[Sabanis(2016)]{SabanisAoAP}
Sotirios Sabanis.
\newblock Euler approximations with varying coefficients: the case of
  superlinearly growing diffusion coefficients.
\newblock \emph{Ann. Appl. Probab.}, 26\penalty0 (4):\penalty0 2083--2105,
  2016.

\bibitem[Sabanis and Zhang(2019)]{hola}
Sotirios Sabanis and Ying Zhang.
\newblock Higher order {L}angevin {M}onte {C}arlo algorithm.
\newblock \emph{Electronic Journal of Statistics}, 13\penalty0 (2):\penalty0
  3805--3850, 2019.

\bibitem[Sabanis and Zhang(2020)]{sabanis2020fully}
Sotirios Sabanis and Ying Zhang.
\newblock {A fully data-driven approach to minimizing CVaR for portfolio of
  assets via SGLD with discontinuous updating}.
\newblock \emph{arXiv preprint arXiv:2007.01672}, 2020.

\bibitem[Vempala and Wibisono(2019)]{vempala2019rapid}
Santosh Vempala and Andre Wibisono.
\newblock {Rapid convergence of the unadjusted Langevin algorithm: Isoperimetry
  suffices}.
\newblock \emph{Advances in neural information processing systems}, 32, 2019.

\bibitem[Xu et~al.(2018)Xu, Chen, Zou, and Gu]{xu2018global}
Pan Xu, Jinghui Chen, Difan Zou, and Quanquan Gu.
\newblock Global convergence of {L}angevin dynamics based algorithms for
  nonconvex optimization.
\newblock In \emph{Advances in Neural Information Processing Systems}, pages
  3122--3133, 2018.

\bibitem[Zhang et~al.(2023)Zhang, Akyildiz, Damoulas, and
  Sabanis]{zhang2023nonasymptotic}
Ying Zhang, {\"O}mer~Deniz Akyildiz, Theodoros Damoulas, and Sotirios Sabanis.
\newblock {Nonasymptotic estimates for stochastic gradient Langevin dynamics
  under local conditions in nonconvex optimization}.
\newblock \emph{Applied Mathematics \& Optimization}, 87\penalty0 (2):\penalty0
  25, 2023.

\end{thebibliography}
\end{document}